\numberwithin{equation}{section}
\numberwithin{figure}{section}
\theoremstyle{plain}
\newtheorem{thm}{Theorem}[section]
\theoremstyle{definition}
\newtheorem{definition}[thm]{Definition}
\newtheorem{example}[thm]{Example}
\theoremstyle{plain}
\newtheorem{prop}[thm]{Proposition}
\theoremstyle{remark}
\newtheorem{rem}[thm]{Remark}
\theoremstyle{plain}
\newtheorem{cor}[thm]{Corollary}
\theoremstyle{plain}
\newtheorem{lem}[thm]{Lemma}
\theoremstyle{remark}
\newtheorem*{rem*}{Remark}
\newcommand{\abs}[1]{\left\vert#1\right\vert}
\newcommand{\set}[1]{\left\{#1\right\}}
\newcommand{\Real}{\mathbb{R}}
\newcommand{\bmu}{\bar{\mu}}
\newcommand{\sph}{\mathbb{S}}
\begin{document}

\title[Uniqueness of WPE metrics]{Uniqueness of Warped Product Einstein metrics and applications.}

\author{Chenxu He}
\address{14 E. Packer Ave\\
Dept. of Math, Lehigh University \\
Christmas-Saucon Hall\\
Bethlehem, PA, 18015.}
\curraddr{Department of Math, University of Oklahoma \\
Norman, OK 73019.}
\email{he.chenxu@lehigh.edu}
\urladdr{http://sites.google.com/site/hechenxu/}

\author{Peter Petersen}
\address{520 Portola Plaza\\
Dept. of Math, UCLA\\
Los Angeles, CA 90095.}
\email{petersen@math.ucla.edu}
\urladdr{http://www.math.ucla.edu/\textasciitilde{}petersen}
\thanks{The second author was supported in part by NSF-DMS grant 1006677}

\author{William Wylie}
\address{215 Carnegie Building\\
Dept. of Math, Syracuse University\\
Syracuse, NY, 13244.}
\email{wwylie@syr.edu}
\urladdr{http://wwylie.mysite.syr.edu}
\thanks{The third author was supported in part by NSF-DMS grant 0905527 }

\dedicatory{Dedicated to Wolfgang T. Meyer on the occasion of his $75$th birthday}

\subjclass[2000]{53B20, 53C30}

\begin{abstract}
We prove that complete warped product Einstein metrics with isometric bases, simply connected space form fibers, and the same Ricci curvature and dimension are isometric.  In the compact case we also prove that the warping functions must  be the same up to scaling, while in the non-compact case there are simple examples showing that the warping function is not unique.   These results follow from a  structure theorem for warped product Einstein spaces which is proven by applying  the results in our earlier paper \cite{HPWwprigidity}  to a  vector space of  virtual Einstein warping functions.  We also use the structure theorem to study gap phenomena for the dimension of the space of warping functions and  the isometry group of a warped product Einstein metric.
\end{abstract}

\maketitle


\section{Introduction}

The study of Einstein manifolds is a vast  and difficult problem in differential geometry.  A general understanding of all solutions is quite far  away, so  it is natural to study certain classes of solutions which are more tractable.   In this paper we study Einstein metrics with a warped product structure.     These spaces have also been systematically   studied in, for example,  \cite{Besse},  \cite{KK}, \cite{CSW},  \cite{CMMR},  \cite{HPWLCF}, \cite{HPWconstantscal}.   There are classical examples such as  the  simply connected spaces of constant curvature and  the Schwarzchild metric along with  more recent  examples constructed in \cite{Besse}, \cite{Bohm}, \cite{Bohm2}, and \cite{LvPagePope}.

A warped product metric  $(E, g_E)$ is a metric  that can be written in the form
\[ (E, g_E) = (M \times_w F, g_M + w^2 g_F), \]
where $(M, g_M)$ is a Riemannian  manifold,   $w$ is a nonnegative function on $M$ with $w^{-1}(0) = \partial M$,  and $(F, g_F)$ is a complete $m$-dimensional Riemannian manifold.   A metric in this form is a $\lambda$-Einstein metric, $\mathrm{Ric}^E = \lambda g_E$,   if
\begin{eqnarray}
\label{eqn:WPE} \mathrm{Ric}^M - \frac{m}{w} \mathrm{Hess} w &=& \lambda g_M \\
\label{eqn:RicFib} \mathrm{Ric}^F &=& \mu g_F\\
\label{eqn:KK} w \Delta w + (m-1) |\nabla w|^2 + \lambda w^2 &=&  \mu
\end{eqnarray}
where $\mu$ is some constant. See \cite[9.106]{Besse}.

Proposition 5 in  \cite{KK}  shows that if  $w$ is a solution to (\ref{eqn:WPE}) then it also solves (\ref{eqn:KK}) for some constant $\mu$.  This shows that if  $w$ is a positive solution to (\ref{eqn:WPE}) and $m>1$, then there is a warped product Einstein metric with base $(M,g_M)$.   When $w$ vanishes on $\partial M$, one also always obtains a smooth Einstein metric,  see Proposition 1.1 in \cite{HPWLCF}.

Einstein metrics are usually not unique and there is  often a smooth moduli space  of solutions  on a fixed manifold.  On the other hand, uniqueness is often true  if the metric is also required to  be  compatible with some additional structure, such as a K\"{a}hler structure \cite{Calabi1, Calabi2} , conformal structure \cite{Osgood-Stowe},  or transitive solvable group of isometries \cite{Heber}.   In this paper, we prove  uniqueness of warped product  Einstein metrics  when the base $(M,g_M)$ is fixed.

\begin{thm} \label{thm:Uniqueness} Let $(M,g_M)$ be a complete Riemannian manifold and fix constants $\lambda$ and $m$.   Then, up to isometry,  there is at most one warped product $\lambda$-Einstein metric  $M \times_w F$  where   $(F^m,g_F)$ is  a simply connected space of constant curvature. \end{thm}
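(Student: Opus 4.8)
The structural fact that drives Theorem~\ref{thm:Uniqueness} is that, although $(\ref{eqn:WPE})$ is nonlinear in $w$, it is equivalent to a \emph{linear} equation: multiplying $(\ref{eqn:WPE})$ by $w$ gives
\[
w\,\Ric^M - m\,\mathrm{Hess}\,w - \lambda w\, g_M = 0,
\]
whose left-hand side is linear in $w$. The plan is therefore to pass to the finite-dimensional vector space
\[
W = \set{\, w \in C^\infty(M) \mid w\,\Ric^M - m\,\mathrm{Hess}\,w - \lambda w\, g_M = 0 \,}
\]
of \emph{virtual Einstein warping functions} (see \cite{HPWwprigidity}). By Proposition~5 of \cite{KK} every $w\in W$ satisfies $(\ref{eqn:KK})$ for a constant $\mu=\mu(w)$, and since the left-hand side of $(\ref{eqn:KK})$ is quadratic in $w$, the map $w\mapsto\mu(w)$ is a quadratic form $q$ on $W$. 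Next I would record the dictionary between geometry and this linear algebra: a warped product $\lambda$-Einstein metric on $M\times_w F$ with $F^m$ a simply connected space of constant curvature is exactly a nonzero $w\in W$ with $w\ge 0$, $w^{-1}(0)=\partial M$, and $w>0$ on the interior, the fiber then being the simply connected space form of dimension $m$ and sectional curvature $q(w)/(m-1)$; moreover replacing $w$ by $cw$ and $g_F$ by $c^{-2}g_F$ ($c>0$) yields literally the same metric on $M\times_w F$, so the total space depends only on the ray $\Real_{>0}\cdot w$. Thus Theorem~\ref{thm:Uniqueness} becomes the assertion that any two rays of positive elements of $W$ give isometric total spaces.

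With this in hand, let $w_1,w_2\in W$ be positive warping functions and set $E_i=M\times_{w_i}F_i$. If $w_1$ and $w_2$ are proportional, the two metrics coincide after the harmless rescaling above, so assume they span a $2$-dimensional subspace of $W$; in particular $\dim W\ge 2$. Here I would invoke the structure theorem obtained by feeding $W$ into the rigidity results of \cite{HPWwprigidity}: it should say that \emph{every} warped product $\lambda$-Einstein metric with base $M$ and $m$-dimensional space-form fiber is isometric to a warped product $\bar M\times_{\bar w}\bar F$ in which $\bar F$ is a simply connected space form of dimension $m+\dim W-1$, and in which $\bar M$, $\bar w$, and the curvature of $\bar F$ are determined by $(M,g_M,\lambda,m)$ alone (the quadratic form $q$ is what pins the curvature down and reconciles $\mu(w_1)$ with $\mu(w_2)$). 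In the language of this decomposition, the ``extra'' $\dim W-1$ directions of $M$ carry a space-form structure governed by $(W,q)$ and get \emph{absorbed} into the fiber $F_i$ to build $\bar F$, while the residual base $\bar M$ and warping function $\bar w$ no longer see the choice of $w_i$. Hence each $E_i$ is isometric to one and the same metric $\bar M\times_{\bar w}\bar F$, and the theorem follows.

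The main obstacle is precisely this structure-theorem step: converting the statements in \cite{HPWwprigidity} about a \emph{single} warped product Einstein metric into a description of the whole space $W$ together with the form $q$, and then turning that description into an honest isometry between the two total spaces. The delicacy is that the required isometry genuinely lives on the total space, not on $M$: already for $M=\Real$ with $\lambda<0$, every positive $w\in W$ produces a copy of $(m+1)$-dimensional hyperbolic space, yet the warping functions $\cosh t$, $\sinh t$, and $e^{t}$ are visibly not carried to one another by an isometry of $\Real$ followed by a rescaling, so no argument on the base alone can succeed; it is the extra space-form factor supplied by the structure theorem, and the room it creates to enlarge the fiber, that furnishes the missing symmetry. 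A secondary technical point is to verify that the conditions $w_i^{-1}(0)=\partial M$ and $w_i>0$ on the interior are transported correctly through the decomposition, so that the enlarged warped product is smooth and complete.
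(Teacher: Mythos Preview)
Your proposal is correct and follows essentially the same route as the paper: reduce to $\dim W\ge 2$, invoke the structure theorem (Theorem~\ref{thm:RicciWP}) to write $M=B\times_u \bar F^{\,k}$ with $k=\dim W-1$ and $w_i=u\,v_i$, and then absorb $\bar F^{\,k}$ into the fiber so that both $E_i$ become $B\times_u(\bar F^{\,k}\times_{v_i}F_i^m)$ with the combined fiber a simply connected space form of Ricci curvature $\mu_B(u)$ independent of $i$. The only things you left implicit that the paper spells out are the warped-product associativity $(B\times_u\bar F)\times_{uv}F = B\times_u(\bar F\times_v F)$ and the observation that an Einstein warped product of two simply connected space forms is again a simply connected space form; also, in your hyperbolic example $\sinh t$ is not a positive function on $\Real$, so drop it and keep $\cosh t$ and $e^t$.
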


The theorem is optimal as simple examples show that all of the assumptions are necessary for uniqueness.   There are two trivial ways to construct infinitely many warped product Einstein metrics on the same smooth manifold. One is to fix  a base space $(M,g_M)$ with no boundary and a function $w>0$ satisfying (\ref{eqn:WPE}) and let $(F, g^t_F) $ be an infinite family of $\mu$-Einstein manifolds.    In this way we can see that, when $m>3$, a positive solution to (\ref{eqn:WPE}) produces an infinite  collection of Einstein metrics.  To avoid this ambiguity, we  use the convention that $(F, g_F)$ is chosen  to be a simply connected space of constant curvature or a circle. The circle only being needed when $\partial M \neq \emptyset$.

The other way to produce infinitely many warped product metrics on the same space  is to let $g^t_M$ be an infinite family of $\lambda$-Einstein metrics,  let $w=1$,  and let  $(F,g_F)$ be a fixed $\lambda$-Einstein manifold.  These examples show that if  we change the base metric $g_M$, it is possible to get many  non-isometric warped product Einstein metrics on the same manifold, even if we fix  $(F,g_F)$ to be a simply connected space form.

Examples of  product manifolds $M \times F$  which support both trivial product Einstein metrics and non-trivial Einstein warped products are  constructed by   B\"{o}hm on  products of spheres \cite{Bohm} and by  L\"{u}-Page-Pope  on $(\mathbb{C}P^2 \# \overline{\mathbb{C}P^2}) \times \mathbb{S}^m$ \cite{LvPagePope}.  These examples are cohomogeneity one which,  by  Proposition 2.1 in \cite{CSW},   is the most symmetry a non-trivial compact example can have.  In the non-compact case,   we also produce many  non-isometric, non-trivial  homogeneous warped product Einstein manifolds  which are diffeomorphic to   $\mathbb{R}^{n+m}$ in  \cite{HPWhomogeneous}.

The starting point of the  proof of  Theorem \ref{thm:Uniqueness}   is to  study the vector space of solutions to (\ref{eqn:WPE}).  Given  $\lambda \in \mathbb{R}$ and  $m \in \mathbb{R}^+$  and fixed $(M,g)$ define   the space of
\emph{virtual $\left(\lambda,n+m\right)$-Einstein warping functions} as
\begin{equation}
W= W_{\lambda,n+m}\left(M,g\right)=\left\{ w\in C^{\infty}\left(M\right):\mathrm{Hess}w=\frac{w}{m}\left(\mathrm{Ric}-\lambda g\right) \right\}. \label{eqn:spaceW}
\end{equation}
  This is clearly a  vector space of functions.  The constant $\mu(w)$,
\begin{eqnarray}
\mu(w) & = &  w \Delta w + (m-1)|\nabla w|^2 + \lambda w^2  \label{eqn:mu}
\end{eqnarray}
 then defines a  quadratic form on $W$.

Let  $m$ be a positive integer, then   $(n+m)$-dimensional warped product Einstein metrics with base $(M,g)$  correspond  to non-negative functions in $W$ which have $w^{-1}(0) = \partial M$.        The choice of  space form  fiber, $(F^m,g_F)$, is  determined  by the requirement that it has Ricci curvature  $\mu(w)$.

When  $\partial M \neq \emptyset$, by Proposition 1.7 of \cite{HPWwprigidity},  the sub-space of functions in $W$ satisfying Dirichlet boundary condition($w=0$ on $\partial M$) is at most one dimensional.  This  implies  Theorem \ref{thm:Uniqueness} when  $\partial M \neq \emptyset$.

When $\partial M = \emptyset$,  Theorem \ref{thm:Uniqueness}  would  follow from uniqueness of positive functions in $W$.   However,  a simple example shows that this is not generally true.

\begin{example} \label{ex:Exp}  Let $M =\mathbb{R}$ and $\lambda <0$.  $W$ is the space of functions satisfying
\[ w'' = - \frac{\lambda}{m} w \]
So
\[ w(t) = C_1 \mathrm{exp}\left( \sqrt{\frac{-\lambda}{m}} t \right) + C_2  \mathrm{exp}\left( -\sqrt{\frac{-\lambda}{m}} t \right)\]
and
\[ \mu(w) = \frac{(m-1) \lambda}{m} C_1 C_2.  \]
Thus we have positive solutions whenever $C_1,C_2\geq 0$.   On the other hand, for every such $w$,  when $(F,g_F)$ is chosen to be the simply connected space form with Ricci curvature $\mu(w)$,   the  metric
\[ \mathbb{R} \times_w F^m \]
is  $(m+1)$-dimensional hyperbolic space with Ricci curvature $\lambda$. So the spaces obtained are all isometric.
\end{example}

In the compact case,   as a consequence of the proof,  we do obtain uniqueness of the warping function.

\begin{cor}  \label{cor:compact} Let $M$ be compact, if there is a positive function in $W_{\lambda, n+m}(M,g)$ then $\dim W_{\lambda, n+m}(M,g) = 1$.
\end{cor}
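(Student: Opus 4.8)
The plan is to convert the defining equation of $W$ into a conservation law relating any two of its elements, and then to integrate it over $M$. First I would note that the presence of a strictly positive $w_0\in W$ forces the base to be boundaryless (in the geometric picture $w_0^{-1}(0)=\partial M$), so $M$ is closed and no boundary terms will appear below.

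The key computation is the following. For $w_1,w_2\in W=W_{\lambda,n+m}(M,g)$, the two Hessian equations give
\[
w_2\,\mathrm{Hess}\,w_1 \;=\; \frac{w_1w_2}{m}\left(\Ric-\lambda g\right) \;=\; w_1\,\mathrm{Hess}\,w_2 ,
\]
so $w_1\,\mathrm{Hess}\,w_2-w_2\,\mathrm{Hess}\,w_1=0$, and taking traces yields $w_1\Delta w_2-w_2\Delta w_1=0$. Since $\mathrm{div}\!\left(w_1\nabla w_2-w_2\nabla w_1\right)=w_1\Delta w_2-w_2\Delta w_1$, the vector field $w_1\nabla w_2-w_2\nabla w_1$ is divergence free on $M$. (Equivalently, the polarization of the quadratic form $\mu$ simplifies on $W$, but the divergence-free statement is all that is needed.)

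Next I would fix a positive $w_0\in W$, take an arbitrary $w\in W$, and set $u=w/w_0\in C^{\infty}(M)$, so that $w_0^2\nabla u=w_0\nabla w-w\nabla w_0$ and hence $\mathrm{div}\!\left(w_0^2\nabla u\right)=0$ by the previous step. Integrating by parts on the closed manifold $M$ gives
\[
0=\int_M u\,\mathrm{div}\!\left(w_0^2\nabla u\right)\,\df V=-\int_M w_0^2\,\abs{\nabla u}^2\,\df V ,
\]
and since $w_0>0$ this forces $\nabla u\equiv0$. Then $u$ is constant, $w$ is a scalar multiple of $w_0$, so $W$ is spanned by $w_0$; as $w_0\neq0$ we conclude $\dim W=1$.

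The calculation itself is elementary, so the main point is simply to recognize that both hypotheses are used, each exactly once, in the integration by parts: compactness of $M$ removes the boundary term at infinity, and strict positivity of $w_0$ lets one pass from $\int_M w_0^2\abs{\nabla u}^2=0$ to $\nabla u\equiv0$. Neither can be dropped, as Example \ref{ex:Exp} produces positive functions in $W$ on the noncompact base $\mathbb{R}$ with $\dim W=2$. The corollary can also be extracted from the structure theorem invoked in the paper, but the conservation-law argument above is self-contained.
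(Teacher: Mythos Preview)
Your argument is correct and is genuinely different from the paper's. The paper deduces the corollary as a consequence of the structure theorem (Theorem \ref{thm:RicciWP}): assuming $\dim W>1$ it passes to the universal cover, invokes \cite{KK} to get $\lambda>0$ and \cite{Qian} to get compactness of $\widetilde M$, and then reaches a contradiction because the fiber $\bar F^k$ in the splitting $\widetilde M=B\times_u\bar F^k$ must be $\sph^k$, on which every $v\in W(\sph^k)$ vanishes somewhere, so $w=uv$ cannot be positive. Your route bypasses all of that: from $w_1\mathrm{Hess}\,w_2=w_2\mathrm{Hess}\,w_1$ you get the divergence-free field $w_0\nabla w-w\nabla w_0=w_0^2\nabla(w/w_0)$, and a single integration by parts on the closed manifold finishes the job. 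The trade-off is that the paper's argument ties the corollary into the broader structural picture (and makes transparent \emph{why} positivity fails when $\dim W>1$, namely that the space-form fiber forces a zero), whereas your argument is self-contained, needs no external references, and in fact works for the more general equation $\mathrm{Hess}\,w=wq$ of \eqref{eqn:q}, not just the Ricci case.

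One small remark on your opening sentence: elements of $W$ are not required to satisfy $w^{-1}(0)=\partial M$; that is a condition singled out for genuine warping functions, not part of the definition of the vector space $W$. So the inference ``$w_0>0$ forces $\partial M=\emptyset$'' is not literally a consequence of the definitions. In the paper's context the corollary is already situated in the boundaryless case (the boundary case having been disposed of just before), so simply taking $M$ closed is the honest hypothesis and your integration by parts is then fully justified.
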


In the non-compact case when $m>1$ we also obtain a uniqueness theorem for the warping function  if the quadratic form $\mu$ is the same.

\begin{cor}  \label{cor:muunique} Suppose $(M,g)$ is a complete Riemannian manifold and $w_1$ and $w_2$ are linearly independent functions  in  $W_{\lambda, n+m}(M,g)$ for some $m>1$.  If $\mu(w_1) = \mu(w_2)$ then there is an isometry $\phi$ of $(M,g_M)$ and a constant $C$,  such that
\[
w_2 =  C \left(w_1 \circ \phi\right).
\]
\end{cor}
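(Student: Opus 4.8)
The plan is to obtain this as a corollary of the structure theorem for warped product Einstein metrics. Since $w_1,w_2$ are linearly independent we have $\dim W_{\lambda,n+m}(M,g)\geq 2$; I would first reduce to the case $\partial M=\emptyset$ (the boundary case being controlled by the Dirichlet rigidity in Proposition~1.7 of \cite{HPWwprigidity}, together with the requirement that an honest warping function vanish exactly on $\partial M$), and then feed $\dim W\geq 2$ into the structure theorem. I expect its conclusion to be an isometry of $(M,g)$ with a warped product
\[
\left(\bar M\times_{\bar v}S^{k}_{\kappa},\ g_{\bar M}+\bar v^{2}g_{S}\right),
\]
where $S^{k}_{\kappa}$ is a simply connected space form of dimension $k=\dim W-1$ and curvature $\kappa$ (whose sign is governed by $\lambda$), $\bar v>0$ is a function on a base $\bar M$ with $\dim W_{\lambda,n+m}(\bar M)\leq 1$, and — this is the point I would lean on — $W_{\lambda,n+m}(M,g)=\set{\bar v\cdot h:h\in\mathcal L}$ with $\mathcal L=\set{h\in C^{\infty}(S^{k}_{\kappa}):\mathrm{Hess}^{S}h+\kappa h\,g_{S}=0}$ the $(k+1)$-dimensional space of space-form warping functions on the fiber. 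Since $\nabla\big(\abs{\nabla^{S}h}^{2}+\kappa h^{2}\big)=2\,\mathrm{Hess}^{S}h(\nabla h,\cdot)+2\kappa h\nabla h=0$ on $S^{k}_{\kappa}$ (using $\mathrm{Hess}^{S}h=-\kappa h\,g_{S}$), the expression $q(h):=\abs{\nabla^{S}h}^{2}+\kappa h^{2}$ is a genuine quadratic form on $\mathcal L$, and I expect the structure theorem (or a short direct computation alongside it) to give $\mu(\bar v h)=\gamma\,q(h)$ for a constant $\gamma$ carrying a factor of $(m-1)$; the hypothesis $m>1$ is precisely what makes $\gamma\neq 0$. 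Both Example~\ref{ex:Exp} (with $\bar M$ a point and $S^{1}_{\kappa}=\mathbb{R}$) and the round sphere with $\lambda=n-1+m$ already fit this picture.

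Granting all this, the proof of the corollary is a transitivity statement. Any $\psi\in\mathrm{Isom}(S^{k}_{\kappa})$ induces an isometry $\phi=\mathrm{id}_{\bar M}\times\psi$ of $M$, because $\bar v$ depends only on the $\bar M$-factor; and if $w_i=\bar v\,h_i$ with $h_i\in\mathcal L$, then $w_1\circ\phi=\bar v\,(h_1\circ\psi)$. The hypothesis $\mu(w_1)=\mu(w_2)$ and $\gamma\neq 0$ force $q(h_1)=q(h_2)$, so it suffices to produce $\psi\in\mathrm{Isom}(S^{k}_{\kappa})$ and a constant $C$ with $h_1\circ\psi=C^{-1}h_2$; then $w_2=C\,(w_1\circ\phi)$. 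Here I would use the standard linear model of a simply connected space form: $\mathcal L$ is identified with (the dual of) a $(k+1)$-dimensional vector space carrying a Euclidean, degenerate, or Lorentzian quadratic form as $\kappa>0$, $\kappa=0$, or $\kappa<0$, with $q$ corresponding to that form and $\mathrm{Isom}(S^{k}_{\kappa})$ acting by the associated orthogonal group. The claim is then that this group, together with the scalings $h\mapsto Ch$, is transitive on each level set of $q$: for $\kappa>0$ the orthogonal group is transitive on every sphere $\set{q=c>0}$ and $C=1$ suffices; for $\kappa=0$ the Euclidean group is transitive on every $\set{q=c>0}$, while $q(h)=0$ means $h$ is a (nonzero) constant and one takes $\phi=\mathrm{id}$, $C=h_2/h_1$; for $\kappa<0$ one invokes transitivity of $\mathrm{Isom}(\mathbb{H}^{k})$ on the connected one-sheeted level sets ($c>0$, $C=1$), on each sheet of the two-sheeted ones ($c<0$, absorbing the sheet-swap into $C=\pm1$), and on the rays of the null cone ($c=0$, absorbing the positive/negative scaling into $C$). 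In every case $q(h_1)=q(h_2)$ is exactly what is needed to land $h_1$ and $C^{-1}h_2$ in the same orbit.

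I expect two genuine difficulties. The first is really the content of the paper rather than of this corollary: establishing the structure theorem so that $W$ is \emph{exactly} $\bar v\cdot\mathcal L$ and $\mu$ is \emph{exactly} a nonzero multiple of $q$, which is where the rigidity results of \cite{HPWwprigidity} must be applied to the full vector space $W$. The second is the honest case bookkeeping when $\kappa<0$: one must check that $\mathrm{Isom}(\mathbb{H}^{k})$ — which, unlike the full Lorentzian orthogonal group, does not contain $-\mathrm{id}$ — together with the sign of $C$ still covers the two sheets of each timelike hyperboloid and the future/past halves of the null cone. The remaining ingredients ($\nabla q=0$ on space forms, the fact that $\mathrm{id}_{\bar M}\times\psi$ is an isometry of the warped product, and the bookkeeping of $C$) are routine. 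Completeness enters only through the structure theorem, to guarantee that the fiber is the \emph{entire} simply connected space form, so that its isometry group is as large as the argument requires.
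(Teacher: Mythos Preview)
Your proposal is correct and follows essentially the same route as the paper: invoke the structure theorem to write $M=B\times_{u}F^{k}$ with $W(M)=u\cdot W_{\mu_B(u),k+m}(F)$, use Proposition~\ref{prop:muMmuF} (which is exactly your computation $\mu(\bar v h)=(m-1)q(h)$) to reduce the hypothesis $\mu(w_1)=\mu(w_2)$ to the fiber, lift fiber isometries via Lemma~\ref{lem:isomlift}, and finish with the transitivity of the isometry group of the simply connected space form on the level sets of the standard Euclidean/degenerate/Lorentzian form on $\mathbb{R}^{k+1}$. Your case analysis for $\kappa\lessgtr 0$ and the handling of $C$ match the paper's remarks that $C=1$ suffices in the elliptic case, $C=\pm1$ in the hyperbolic case, and $C$ is unrestricted in the parabolic null case.
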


Theorem \ref{thm:Uniqueness} and its corollaries  are an  application of  Theorem \ref{thm:RicciWP} which is a structure theorem for spaces with $\dim W_{\lambda, n+m}(M,g) >1$.  The structure theorem  essentially says that every simply connected, complete Riemannian manifold $(M,g)$ with $\dim W(M) >1$ must split as a warped product with fiber of constant curvature and base, $B$, having $W(B)$ one dimensional.   This  is an application of the results in \cite{HPWwprigidity} where it is shown that much of this structure holds in much more generality.

In the next section we discuss  Theorem \ref{thm:RicciWP}  and show how the uniqueness theorem  follows from it.   In section 3 we prove Theorem \ref{thm:RicciWP} and also Theorem \ref{thm:Existence} which is a construction showing  the structure theorem is optimal.  In section 4 we prove gap theorems for spaces with large $W$, for example, showing that the case $\dim W = n$ does not occur.   In section 5 we study   the isometry group of the metrics in the structure theorem.   We will further exploit these results  to study   homogeneous warped product Einstein metrics  in the sequel \cite{HPWhomogeneous}.  We also give the proof of Corollary    \ref{cor:muunique} at the end of section 5.

There are also two appendices.  In the first appendix we compute the space $W(M)$ for a general warped product, parts  of this  computation are used in a few places in the proof of Theorem \ref{thm:RicciWP}.  In the second appendix we discuss some more elementary details about  the quadratic form $\mu$ in the special case $m=1$.
\smallskip

\textbf{Acknowledgment:} The authors would like to thank Christoph B\"{o}hm, Wolfgang K\"{u}hnel, John Lott and Fred Wilhelm for enlightening conversations and helpful suggestions which helped us with our work.

\medskip
\section{Rigidity of metrics with more than one Einstein  warping function}

In \cite{HPWwprigidity} we studied the  space of functions
\begin{equation}
\label{eqn:q} W(M;q) = \{ w : \mathrm{Hess} w = w q \}
\end{equation}
where $q$ is any smoothly varying quadratic form on the tangent space of $M$. Virtual solutions to the warped product Einstein equation (\ref{eqn:spaceW}) are of this form with
\begin{equation}\label{eqn:qRiclambda}
q = \frac{1}{m}\left(\mathrm{Ric} - \lambda g\right).
\end{equation}

For Einstein warping functions, we also have the quadratic form $\mu$  (\ref{eqn:mu}), which, using  the trace of  (\ref{eqn:WPE}),
\[ \Delta w = \frac{w}{m} \left( \mathrm{scal} - m\lambda \right), \]
can also be written as
\begin{eqnarray*}
\mu(w) &=& w \Delta w + (m-1) |\nabla w|^2 + \lambda w^2 \\
&=& (m-1) |\nabla w|^2 + \frac{ \mathrm{scal} - (n-m)\lambda }{m} w^2.
\end{eqnarray*}
Given $p$, the quadratic form can then be localized to a quadratic form on $\mathbb{R} \times T_pM$ given by
\[ \mu_p(\alpha, v) =  (m-1) |v|^2 + \frac{ \mathrm{scal} - (n-m)\lambda }{m} \alpha^2. \]

This shows that, when $m>1$, $\mu$ must either be positive definite(elliptic), degenerate with nullity 1(parabolic), or nondegenerate with index 1(hyperbolic). This is clarified by the following example.

\begin{example} \label{ex:spaceform}
Let $(M^n,g)$, $n>1$,  be simply connected with constant curvature $\kappa$.  If $\kappa = \frac{\lambda}{n+m-1}$,   then
\[ \dim W_{\lambda, n+m}(M,g) = n+1 \]
where
\begin{itemize}
\item $\mu$  is elliptic if  $M=\sph^n \subset \mathbb{R}^{n+1}$ and   $W= \mathrm{span} \{x^i|_{\sph^n}\}$.
\item$\mu$ is parabolic if $M=\mathbb{R}^n$ and   $W = \mathrm{span} \{ 1, x^{i} \}$.
\item$\mu$ is hyperbolic if  $M = H^n \subset \Real^{n,1}$  and  $W = \mathrm{span} \{ x^i|_{H^n}\}$ .
\end{itemize}
When the curvature $\kappa \neq \frac{\lambda}{n+m-1}$ then $W$ is either trivial or contains only  constant functions.
\end{example}

\begin{rem} The properties of the quadratic form $\mu$ are different when $m=1$. In this case either $\dim W = 1$, or $\mu =0$ on all functions in $W$.   See Appendix B.  \end{rem}

The first important result from \cite{HPWwprigidity}  relating  the functions in $W$ to  the geometry of the manifold is the following.

\begin{thm} \label{prop:bilinearform}
Let $\left(M^n, g\right)$ be a Riemannian manifold. For any $p \in M$, the evaluation map
\begin{eqnarray*}
\left(W_{\lambda,n+m}\left(M,g\right),\mu \right) & \rightarrow & \left(\mathbb{R}\times T_{p}M,\mu_{p}\right),\\
w & \mapsto & \left(w\left(p\right),\nabla w|_{p}\right)
\end{eqnarray*}
is an injective isometry with respect to the quadratic forms $\mu$ and $\mu_{p}.$ Moreover, there is an injective homomorphism
\begin{eqnarray*}
\wedge ^2 W & \rightarrow& \mathfrak{iso}(M,g) \\
v \wedge w &\rightarrow& v \nabla w - w \nabla v
\end{eqnarray*}
whose image forms a Lie subalgebra of $ \mathfrak{iso}(M,g)$.
\end{thm}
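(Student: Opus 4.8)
The plan is to extract both assertions from the \emph{second order} character of the equation $\mathrm{Hess}\,w=\frac{w}{m}(\mathrm{Ric}-\lambda g)$, which forces each $w\in W$ to be determined by its $1$-jet at a single point $p$. For the evaluation map, linearity is clear and the issue is injectivity (which also gives the bound $\dim W\le n+1$): if $w\in W$ has $w(p)=0$ and $\nabla w|_p=0$, then the set $Z=\{q\in M:w(q)=0,\ \nabla w|_q=0\}$ is closed, nonempty, and open --- along any geodesic $\gamma$ issuing from a point of $Z$ the pair $\bigl(w\circ\gamma,\nabla w\circ\gamma\bigr)$ satisfies a homogeneous linear first order ODE, the $\nabla w$ component being $\frac{D}{dt}\nabla w=\frac{w}{m}(\mathrm{Ric}-\lambda g)(\dot\gamma,\,\cdot\,)^{\sharp}$, hence vanishes identically, so $w\equiv0$ on a normal ball about that point --- whence $Z=M$ by connectedness. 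For the compatibility with the quadratic forms, the trace $\Delta w=\frac{w}{m}(\mathrm{scal}-n\lambda)$ of the equation lets one rewrite
\[
\mu(w)=w\Delta w+(m-1)|\nabla w|^2+\lambda w^2=(m-1)|\nabla w|^2+\frac{\mathrm{scal}-(n-m)\lambda}{m}\,w^2 ,
\]
whose value at $p$ is exactly $\mu_p\bigl(w(p),\nabla w|_p\bigr)$; so it suffices to know that $\mu(w)$ is constant on $M$ (this is Proposition~5 of \cite{KK}). Differentiating $\mu(w)$ and using (i) the pointwise identity $w\,\mathrm{Ric}(\nabla w,\,\cdot\,)=\frac{m}{2}\,d|\nabla w|^2+\lambda w\,dw$, immediate from the equation and the symmetry of $\mathrm{Hess}\,w$, together with (ii) the identity obtained by taking the divergence of the equation, which introduces $d\,\mathrm{scal}$ through the contracted second Bianchi identity, one finds $d\mu(w)=0$.

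For the second part, put $X_{v,w}=v\nabla w-w\nabla v$ for $v,w\in W$. Since $\nabla_Z\nabla w=\frac{w}{m}(\mathrm{Ric}-\lambda g)(Z,\,\cdot\,)^{\sharp}$, a short computation gives $\langle\nabla_Y X_{v,w},Z\rangle=(Yv)(Zw)-(Yw)(Zv)$ --- the symmetric Hessian terms cancel --- which is skew in $Y,Z$, so $X_{v,w}\in\mathfrak{iso}(M,g)$; being bilinear and alternating in $(v,w)$, this assignment descends to a linear map $\Phi\colon\wedge^2 W\to\mathfrak{iso}(M,g)$. The same computation identifies $(\nabla X_{v,w})|_p$ with $\nabla v|_p\wedge\nabla w|_p\in\mathfrak{so}(T_pM)$, while $X_{v,w}|_p=v(p)\nabla w|_p-w(p)\nabla v|_p$, so the $1$-jet of $X_{v,w}$ at $p$ is precisely the image of $\bigl(v(p),\nabla v|_p\bigr)\wedge\bigl(w(p),\nabla w|_p\bigr)$ under the canonical isomorphism $\wedge^2(\mathbb{R}\oplus T_pM)\cong T_pM\oplus\mathfrak{so}(T_pM)$. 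As the evaluation map embeds $W$ into $\mathbb{R}\oplus T_pM$, it embeds $\wedge^2 W$ into $\wedge^2(\mathbb{R}\oplus T_pM)$; and a Killing field whose $1$-jet at $p$ vanishes is zero on the connected $M$. Hence $\Phi$ is injective.

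Finally, substituting $\nabla_Z X_{v,w}=(Zv)\nabla w-(Zw)\nabla v$ into $[X,Y]=\nabla_X Y-\nabla_Y X$ and expanding yields
\[
[X_{v,w},X_{v',w'}]=\langle\nabla w,\nabla v'\rangle X_{v,w'}-\langle\nabla w,\nabla w'\rangle X_{v,v'}+\langle\nabla v,\nabla w'\rangle X_{w,v'}-\langle\nabla v,\nabla v'\rangle X_{w,w'}.
\]
A priori these coefficients are functions; but polarizing the (now constant) form $\mu$ gives, for $m>1$, $(m-1)\langle\nabla a,\nabla b\rangle=\mu(a,b)-\frac{\mathrm{scal}-(n-m)\lambda}{m}\,ab$, and on substituting this every $ab$ term cancels identically, leaving
\[
[X_{v,w},X_{v',w'}]=\frac{1}{m-1}\Bigl(\mu(w,v')X_{v,w'}-\mu(w,w')X_{v,v'}+\mu(v,w')X_{w,v'}-\mu(v,v')X_{w,w'}\Bigr),
\]
a fixed linear combination of the $X_{a,b}$. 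Thus $\Phi(\wedge^2 W)$ is a Lie subalgebra, and --- up to the scalar $\frac{1}{m-1}$ --- the right-hand side is the Lie bracket of $\mathfrak{so}(W,\mu)$, so $\Phi$ is a homomorphism for that structure. (When $m=1$ the quadratic form $\mu$ behaves differently and is handled separately, cf.\ Appendix~B.) I expect the constancy of $\mu(w)$ to be the only genuinely non-formal point --- it is the single place where the equation is used beyond being second order, through the contracted Bianchi identity.
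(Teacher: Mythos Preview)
Your argument is correct and follows essentially the same route as the paper's: the ODE-along-geodesics uniqueness for the evaluation map, the computation $\langle\nabla_Y X_{v,w},Z\rangle=(dv\wedge dw)(Y,Z)$ for the Killing property, and the explicit bracket formula with $\mu$-coefficients for the Lie subalgebra are exactly the contents of the results in \cite{HPWwprigidity} that the paper invokes rather than reproves. Your proof is thus a self-contained version of theirs.

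One small point of divergence worth noting: your bracket computation needs $m\neq 1$ to divide by $m-1$, and deferring to Appendix~B does not actually close the $m=1$ case (that appendix discusses the form $\mu$, not the bracket). The paper handles $m=1$ by a different, geometric argument --- Corollary~3.7 of \cite{HPWwprigidity} shows that the fields $v\nabla w-w\nabla v$ span an integrable distribution, hence a Lie subalgebra, without invoking $\mu$ at all. If you want a fully self-contained argument you would need to supply something here; alternatively, one can observe from Appendix~B that when $m=1$ and $\dim W>1$ the scalar curvature is the constant $(n-1)\lambda$, which makes $\langle\nabla a,\nabla b\rangle$ itself constant by differentiating $\mu(a,b)=0$, and then your bracket formula already has constant coefficients before any division.
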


\begin{proof}
The first part is   Proposition 1.1  from \cite{HPWwprigidity} with the extra observation that the evaluation map preserves the forms.  Lemma 3.1   of \cite{HPWwprigidity}   shows that
\[  g( \nabla_X ( v \nabla w - w \nabla v), Y) = (dv \wedge dw)(X,Y) \]
Showing that $\nabla_{\cdot}  ( v \nabla w - w \nabla v)$ is anti-symmetric, so that  $v \nabla w - w \nabla v$ is  a Killing vector field.  The injectivity of the map
\[ v \wedge w \rightarrow v \nabla w - w \nabla v \]
follows from the injectivity of the evaluation map by  linear algebra.

When $m>1$, we can view this second map as a Lie algebra homomorphism where $\wedge^2 W$ is endowed with a natural Lie algebra structure coming from the quadratic form $\mu$, see section 8.2 of \cite{HPWwprigidity}.   In full generality,  Corollary 3.7 of  \cite{HPWwprigidity}  shows that the Killing vector fields $v \nabla w - w \nabla v$  span an integrable distribution and therefore from a Lie subalgebra  of $\mathfrak{iso}(M,g)$ even when $m=1$.
\end{proof}

As a corollary we get the following characterization of when $W$ has maximal dimension.

\begin{cor} \label{prop:spaceform} Let $(M^n,g)$ be a complete Riemannian manifold, then
\[ \dim W_{\lambda,n+m}\left(M,g\right) \leq n+1. \]
Moreover,  $\dim W_{\lambda,n+m}\left(M,g\right) = n+1$ if and only if $M$ is either a simply connected space of constant curvature or a circle.
\end{cor}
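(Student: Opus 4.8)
The plan is to read the inequality off Theorem \ref{prop:bilinearform} and to settle the equality case with a dimension count on the isometry algebra.

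First I would observe that for any fixed $p\in M$ the evaluation map $w\mapsto(w(p),\nabla w|_p)$ is an injection of $W$ into $\mathbb{R}\times T_pM$ by Theorem \ref{prop:bilinearform}, and since $\dim(\mathbb{R}\times T_pM)=n+1$ this immediately gives $\dim W\le n+1$. For the ``if'' direction, Example \ref{ex:spaceform} already gives $\dim W=n+1$ when $M$ is a simply connected space of constant curvature $\kappa=\lambda/(n+m-1)$ (and shows $\dim W\le1$ for any other value of $\kappa$, so the normalization is forced); and when $M$ is a circle one has $n=1$, $\mathrm{Ric}=0$, and $W$ is the solution space of $w''=-\tfrac{\lambda}{m}w$, which is $2$-dimensional $=n+1$ precisely when $\lambda<0$ and the circumference is an integer multiple of $2\pi\sqrt{m/(-\lambda)}$.

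The real content is the converse. Assuming $\dim W=n+1$, the exterior square $\wedge^2W$ has dimension $\binom{n+1}{2}=\tfrac12 n(n+1)$, so the injective homomorphism $\wedge^2W\to\mathfrak{iso}(M,g)$ of Theorem \ref{prop:bilinearform} forces $\dim\mathfrak{iso}(M,g)\ge\tfrac12 n(n+1)$. Since $M$ is complete, $\mathfrak{iso}(M,g)$ is the Lie algebra of the isometry group (Myers--Steenrod), and the classical dimension bound for isometry groups gives $\dim\mathfrak{iso}(M,g)\le\tfrac12 n(n+1)$; hence equality holds. By the classical classification of $n$-manifolds whose isometry group has maximal dimension, a complete $M$ with $\dim\mathfrak{iso}(M,g)=\tfrac12 n(n+1)$ is isometric to $\mathbb{R}^n$, a round sphere, a hyperbolic space, or a real projective space $\mathbb{R}P^n$. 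The first three are simply connected spaces of constant curvature and $\mathbb{R}P^1$ is a circle, so the only case left to rule out is $\mathbb{R}P^n$ with $n\ge2$.

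To exclude $\mathbb{R}P^n$ with $n\ge2$, I would use that such $M$ is Einstein with $\mathrm{Ric}=(n-1)\kappa g$, so $q=\tfrac1m\bigl((n-1)\kappa-\lambda\bigr)g=c\,g$ for a constant $c$ and every $w\in W$ satisfies $\mathrm{Hess}\,w=c\,w\,g$. Tracing gives $\Delta w=ncw$ on the compact manifold $\mathbb{R}P^n$, so $\int|\nabla w|^2=-nc\int w^2$, which forces $w$ to be constant unless $c<0$; and when $c<0$, Obata's theorem applied to $\mathrm{Hess}\,w=c\,w\,g$ would make $\mathbb{R}P^n$ isometric to a round sphere, which is impossible for $n\ge2$. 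Hence $\dim W\le1$ on $\mathbb{R}P^n$, contradicting $\dim W=n+1\ge3$, and $M$ must be a simply connected space of constant curvature or a circle. I expect this last paragraph to be the main obstacle: the maximal-symmetry classification outputs $\mathbb{R}P^n$ alongside the simply connected space forms, and excluding it needs the separate Obata-type argument, whereas the rest is a dimension count against Theorem \ref{prop:bilinearform} together with standard facts about isometry groups.
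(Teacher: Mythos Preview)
Your approach is essentially the paper's: the bound $\dim W\le n+1$ from the evaluation map, the maximal isometry algebra from the injection $\wedge^2W\hookrightarrow\mathfrak{iso}(M,g)$, then the classical classification of maximally symmetric spaces, followed by an ad hoc exclusion of $\mathbb{R}P^n$. The paper dispatches $\mathbb{R}P^n$ by an unspecified ``direct calculation,'' so your Obata argument is a fine way to fill that in.

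There is, however, a sign slip in your treatment of the circle. The equation $w''=-\tfrac{\lambda}{m}w$ has periodic solutions, namely $C_1\cos\bigl(\sqrt{\lambda/m}\,t\bigr)+C_2\sin\bigl(\sqrt{\lambda/m}\,t\bigr)$, exactly when $\lambda>0$; when $\lambda<0$ the solutions are real exponentials and never descend to a circle, and when $\lambda=0$ only the constants do. So the circle occurs for $\lambda>0$ with circumference an integer multiple of $2\pi\sqrt{m/\lambda}$, which is what the paper records. With that correction your argument goes through and matches the paper's.
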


\begin{rem} In Corollary \ref{cor:gap} we also show that the case $\dim W(M) = n$ does not occur. \end{rem}

\begin{proof}
Since we have a linear injection from $W$ into a space of dimension $n+1$,  $\dim W \leq n+1$.

In the case $\dim W =n+1$, the injection into  $\mathfrak{iso}(M,g)$ shows that the isometry group of $M$ has  maximal dimension $\frac{1}{2} n(n+1)$, which implies that $M$ is either a simply connected space form, a circle, or a real projective space.

A direct calculation shows that the real projective space does not  have $\dim W >1$.    Example \ref{ex:spaceform} shows that  the simply connected spaces of constant curvature do occur. The solutions on  $\sph^1$ also can occur  when $\lambda>0$  as they correspond to   periodic solutions to $w'' = \frac{\lambda}{m} w$ on the real line, that is
\[
w= C_1 \cos\left(\sqrt{ \frac{\lambda}{m}}t\right) + C_2 \sin\left(\sqrt{ \frac{\lambda}{m}}t\right).
\]
\end{proof}

The Killing vector fields $v \nabla w - w \nabla v$ are also the starting point for proving the structure theorem when  $1 < \dim W_{\lambda, n+m}(M,g) < n+1$.  In this case we get a warped product splitting with fiber a space form whose tangent space is spanned by the Killing vector fields. In the case of warped product Einstein metrics this leads to following structure.

\begin{thm}\label{thm:RicciWP}  Let $(M,g)$ be a  complete, simply connected Riemannian manifold with $ \dim W_{\lambda, n+m}(M,g)=k+1$, then
\[  M = B^b \times_u F^k\]
where
\begin{enumerate}
\item $B$ is a manifold, possibly  with boundary, and $u$ is a nonnegative function in $W_{\lambda, b + (k+m)}(B,g_B)$ with $u^{-1}(0)= \partial B$,
\item $u$ spans $W_{\lambda, b + (k+m)}(B,g_B)$, and
\item $F^k$ is a space form with $\dim W_{\mu_B(u) , k+m}(F,g_F) = k+1$,
where $\mu_B$ denotes the quadratic form on $W_{\lambda, b + (k+m)}(B,g_B)$. \end{enumerate}
Moreover,
\begin{enumerate} \setcounter{enumi}{3}
\item $ W_{\lambda, n+m}(M,g) = \{ uv : v \in W_{\mu_B(u), k+m}(F, g_F) \}.$
\end{enumerate}
\end{thm}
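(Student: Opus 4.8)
The plan is to import the global warped product splitting of \cite{HPWwprigidity} and then read the four conclusions off from the warped product curvature identities together with the injectivity of the evaluation map in Theorem \ref{prop:bilinearform}. Write $W=W_{\lambda,n+m}(M,g)$; by that theorem the Killing fields $X_{v,w}=v\nabla w-w\nabla v$, $v,w\in W$, span an integrable distribution $\mathcal{V}$. A short linear algebra argument shows that at every point $p$ where some element of $W$ is nonzero one has $\dim\mathcal{V}_{p}=\dim W-1$: pick $w_{0}\in W$ with $w_{0}(p)\neq 0$, complete it to a basis $w_{0},\hat w_{1},\dots,\hat w_{k}$ of $W$ with $\hat w_{i}(p)=0$, and observe that $X_{w_{0},\hat w_{i}}|_{p}=w_{0}(p)\,\nabla\hat w_{i}|_{p}$, that these gradients are linearly independent by injectivity of the evaluation map, and that every $X_{v,w}|_{p}$ is a linear combination of them. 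Applying the structure theorem of \cite{HPWwprigidity} to the quadratic form $q=\frac{1}{m}(\Ric-\lambda g)$ then gives, when $\dim W>1$ (the case $\dim W=1$ being trivial with $F$ a point), a warped product $M=B^{b}\times_{u}F^{k}$ in which the fibers $\{x\}\times F$ are the integral leaves of $\mathcal{V}$, so that $T_{p}F=\mathcal{V}_{p}$, while $u\ge 0$ is the warping function with $u^{-1}(0)=\partial B$ and $F$ is complete.

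Next I would identify the equations governing the two factors. Restricting $\mathrm{Hess}_{M}u=\frac{u}{m}(\Ric^{M}-\lambda g)$ to directions tangent to $B$ and using the warped product identity $\Ric^{M}|_{TB}=\Ric^{B}-\frac{k}{u}\mathrm{Hess}_{B}u$ gives $\mathrm{Hess}_{B}u=\frac{u}{k+m}(\Ric^{B}-\lambda g_{B})$, that is, $u\in W_{\lambda,b+(k+m)}(B,g_{B})$; combined with the properties of the splitting this is conclusion (1). Restricting the same equation to directions tangent to $F$ and using the corresponding formula for $\Ric^{M}|_{TF}$ forces $\Ric^{F}=\mu_{B}(u)g_{F}$, where $\mu_{B}(u)=u\Delta_{B}u+(k+m-1)|\nabla_{B}u|^{2}+\lambda u^{2}$ is constant by Proposition 5 of \cite{KK}; thus $F$ is Einstein with exactly the constant appearing in the fiber parameter. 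For a general $w\in W$, the mixed block of $\mathrm{Hess}_{M}w=\frac{w}{m}(\Ric^{M}-\lambda g)$ reads $\mathrm{Hess}_{M}w(X,V)=0$ for $X\in TB$ and $V\in TF$; since $\nabla_{X}V=\frac{Xu}{u}V$ in a warped product, this says $X(Vw/u)=0$, so $w=uv+c$ with $v\in C^{\infty}(F)$ and $c\in C^{\infty}(B)$. Carrying out the remaining blocks — this is the computation of Appendix A — shows that in fact $c\in W_{\lambda,b+(k+m)}(B,g_{B})$ and $v\in W_{\mu_{B}(u),k+m}(F,g_{F})$, and conversely that every such $uv+c$ lies in $W$. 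Since the map $(v,c)\mapsto uv+c$ has one-dimensional kernel (the pairs $(\mathrm{const},-\mathrm{const}\cdot u)$), we obtain
\[
\dim W_{\lambda,n+m}(M,g)=\dim W_{\mu_{B}(u),k+m}(F,g_{F})+\dim W_{\lambda,b+(k+m)}(B,g_{B})-1 .
\]

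To finish I would pin down the two dimensions on the right. Restricting $X_{w_{1},w_{2}}$ to a fiber $F_{x_{0}}$ and writing $w_{i}=uv_{i}+c_{i}$ and $a_{i}=v_{i}+c_{i}(x_{0})/u(x_{0})$ (which lies in $W_{\mu_{B}(u),k+m}(F,g_{F})$, constants being permitted there since $\Ric^{F}=\mu_{B}(u)g_{F}$), a short computation gives $X_{w_{1},w_{2}}|_{F_{x_{0}}}=a_{1}\nabla^{F}a_{2}-a_{2}\nabla^{F}a_{1}$. By construction these vector fields span $T_{p}F=\mathcal{V}_{p}$; since they also lie in the span $\mathcal{V}^{F}_{p}\subseteq T_{p}F$ of the Killing fields of $(F,g_{F})$ arising from $W_{\mu_{B}(u),k+m}(F,g_{F})$, we get $\mathcal{V}^{F}_{p}=T_{p}F$, and the same linear algebra argument applied on $F$ gives $\dim W_{\mu_{B}(u),k+m}(F,g_{F})=k+1$; by Corollary \ref{prop:spaceform} this identifies $F$ as a simply connected space of constant curvature or a circle, which is conclusion (3). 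Substituting $\dim W_{\lambda,n+m}(M,g)=k+1$ and $\dim W_{\mu_{B}(u),k+m}(F,g_{F})=k+1$ into the displayed formula yields $\dim W_{\lambda,b+(k+m)}(B,g_{B})=1$, so $u$ spans it — conclusion (2). Then $c$ is a constant multiple of $u$, whence $w=uv+c=u(v+\mathrm{const})$ with $v+\mathrm{const}\in W_{\mu_{B}(u),k+m}(F,g_{F})$, which is conclusion (4).

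I expect the main obstacle to be the very first step: promoting the integrable distribution $\mathcal{V}$ to a genuine global, smooth, complete warped product $M=B\times_{u}F$, with the correct behavior along $u^{-1}(0)$. This is precisely what is imported from \cite{HPWwprigidity}, and once it is available the rest is bookkeeping with O'Neill-type formulas and linear algebra. The one delicate point is the apparent circularity between conclusions (2) and (4): it is broken by first establishing only the weaker decomposition $w=uv+c$ with $c\in W_{\lambda,b+(k+m)}(B,g_{B})$, then using the geometric input $T_{p}F=\mathcal{V}_{p}$ to force $\dim W_{\mu_{B}(u),k+m}(F,g_{F})=k+1$, and only afterwards deducing $\dim W_{\lambda,b+(k+m)}(B,g_{B})=1$.
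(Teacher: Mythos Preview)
There is a genuine gap, and it stems from one unjustified equation. You write ``Restricting $\mathrm{Hess}_{M}u=\frac{u}{m}(\Ric^{M}-\lambda g)$ to directions tangent to $F$ \dots\ forces $\Ric^{F}=\mu_{B}(u)g_{F}$.'' But $u$ is \emph{not} an element of $W_{\lambda,n+m}(M)$ in general: the identity $\frac{1}{m}(\Ric^{M}-\lambda g)|_{\mathcal{B}}=\frac{1}{u}\mathrm{Hess}_{B}u$ that one imports from \cite{HPWwprigidity} is valid only on the horizontal distribution $\mathcal{B}$, not on $\mathcal{F}$. The correct Ricci curvature of the fiber is $\Ric^{F}=\frac{k-1}{m+k-1}\mu_{B}(u)\,g_{F}$ (see the definition of ``fiber space'' and Proposition~\ref{prop:WPbasemanifold}), and in fact $u\in W_{\lambda,n+m}(M)$ if and only if $\mu_{B}(u)=0$.

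This error propagates. First, constants do \emph{not} lie in $W_{\mu_{B}(u),k+m}(F)$ when $\mu_{B}(u)\neq 0$, so your functions $a_{i}=v_{i}+c_{i}(x_{0})/u(x_{0})$ are not in $W(F)$ and the Killing-field argument for $\dim W(F)=k+1$ breaks down. Second, the Appendix~A computation does not give $W(M)=\{uv+c:v\in W(F),\,c\in W(B)\}$; in case~(1.a) one has the additional constraint $\mu_{B}(u,c)=0$, and in case~(1.b) the equation for $v$ depends on $c$. Your dimension formula happens to survive these corrections, but the argument identifying $\dim W(F)$ does not. The paper avoids all of this by invoking Theorem~B of \cite{HPWwprigidity}, which gives $W(M)=\{uv\}$ directly; then $\dim W(F)=\dim W(M)=k+1$ is immediate, and conclusion~(2) is obtained afterwards by a short contradiction argument (Proposition~\ref{prop:WPbasemanifold}): an extra $z\in W(B)$ would produce, via Theorem~\ref{thm:WPSpace}, an element $z+uv\in W(M)$ not of the form $u\bar v$.
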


\begin{rem} \label{rem:RicciWPpi1}In the non-simply connected case, we obtain a warped product splitting on the universal cover of $M$.  From Proposition 6.5 of \cite{HPWwprigidity} we also get a warped product splitting on $M$, unless $F$ is $\mathbb{R}$ and $\mu_B(u)>0$.   This second case does occur when   $W_{\mu_B(u), k+m}(F, g_F)$ is a span of sine and cosine functions. On the other hand,  if $W_{\lambda, n+m}(M,g)$ contains a positive function, we always obtain a warped product structure
\[  M = (B/\Gamma) \times_u F^k \]
where $\Gamma= \pi_1(M)$.  This splitting will also satisfy (1)-(4) with $B$ replaced by $B/\Gamma$.
\end{rem}

We now show how the uniqueness theorem follows from this  theorem.

\begin{proof}[Proof of Theorem \ref{thm:Uniqueness}]
If $\dim W=1$ the uniqueness of the warped product Einstein metric follows from the uniqueness of the warping function.    By Proposition 1.7 of \cite{HPWwprigidity} this proves the theorem when $\partial M \neq \emptyset$.  The only other case to consider is when $\partial M = \emptyset$, $W$ contains a positive function and  $\dim W>1$.

Let $E_1 = M \times_{w_1} F_1^m$ and $E_2 = M \times_{w_2} F_2^m$ be two $\lambda$-Einstein metrics with $F_i^m$ simply connected space forms and where $M$ satisfies the conditions listed above.  From Theorem \ref{thm:RicciWP} and Remark  \ref{rem:RicciWPpi1} we have
\[ E_i= M \times_{w_i} F_{i}^m= (B \times_u \bar{F}^k) \times_{uv_i} F_i^m = B \times_u \left(\bar{F}^k \times_{v_i} F_i^m\right), \]
where  $\bar{F}^k$ is another simply connected space form.

Applying (\ref{eqn:RicFib}) to this last splitting shows that $\bar{F}^k \times_{v_i} F_i^m$ is Einstein, which implies that $\bar{F}^k \times_{v_i} F_i^m$, being an Einstein warped product of simply connected space forms,  is also a simply connected  space form.  Moreover, by (\ref{eqn:KK}), the Ricci curvature of $\bar{F}^k \times_{v_i} F_i^m$ is $\mu_B(u)$,  so $\bar{F}^k \times_{v_1} F_1^m$ and  $\bar{F}^k \times_{v_2} F_2^m$ are isometric, since they are simply connected space forms of the same dimension and curvature.

Since isometries of fibers of a warped product lift to isometries of the total space (see Lemma \ref{lem:isomlift}), this shows there is an isometry between  $M \times_{w_1} F_1^m$ and $M \times_{w_2} F_2^m$ for such $w_{1,2}$. \end{proof}

This also gives us the stronger result in the compact case.

\begin{proof}[Proof of Corollary \ref{cor:compact}]
Suppose that $M$ is compact, has a positive function in $W(M)$,  and has $\dim W(M)>1$.  By Theorem 1 of \cite{KK},  $\lambda$ must be positive.  Let $\widetilde M$ be the universal cover of $M$.  Functions in  $W(M)$ lift to functions in  $W(\widetilde{M})$, so we also have $\dim W(\widetilde{M})>1$.  Then, by Theorem \ref{thm:RicciWP},
\[ \widetilde{M} = B \times_u \bar{F}^k. \]
Since $\lambda$ is positive,  by Theorem 5 of \cite{Qian},   $\widetilde{M}$ is also compact.  This implies that $\bar{F}^k$ must be $\sph^k$.  However, on $\sph^k$ all solutions vanish somewhere.  Since $w=uv$ this  contradicts the existence of a positive function in $W(M)$.
\end{proof}

The only conclusion in Theorem \ref{thm:RicciWP} that does not generalize to solutions to $(\ref{eqn:q})$  is  property (2) as Example 5.2 of \cite{HPWwprigidity} shows.  This property was not important for proving the uniqueness theorem,  but it is extremely important for other geometric applications, for example the results in sections 4 and 5.  For this reason we give the following definitions.

\begin{definition}
Let $(B^b, g_B)$ be a Riemannian manifold possibly with boundary  and let  $u$ a nonnegative function on $B$ with $u^{-1}(0) = \partial B$. Then $(B, g_B, u)$ is called  \emph{$(\lambda,k+m)$-base manifold} if $\left(W_{\lambda, b+(k+m)}(B, g_B)\right)_D = \text{span}\set{u}$, where $W_D$ denotes solutions satisfying Dirichlet boundary condition. It is an \emph{irreducible base manifold} if  $W_{\lambda, b+(k+m)}(B, g_B)= \text{span}\set{u}$ with no boundary conditions imposed.
\end{definition}

\begin{rem}
When $\partial B = \emptyset$, every base manifold is irreducible.  When $\partial B \neq \emptyset$ there are base manifolds which are not irreducible, see Examples 1.11 and 1.12 in \cite{HPWwprigidity}.
\end{rem}

Property (2) of Theorem \ref{thm:RicciWP} is equivalent to saying that  $(B, g_B,u)$ is an irreducible base manifold.   On the other hand, Theorem \ref{thm:RicciWP} is an optimal structure theorem in the sense that every irreducible base manifold produces an example.

\begin{thm} \label{thm:Existence}  Given an irreducible $(\lambda,k+m)$-base manifold $(B, g_B, u)$ there is a complete metric of the form
\[ M = B^b \times_u F^k \]
such that  $\dim W_{\lambda, (b+k) +m} (M,g_M) = k+1$.
\end{thm}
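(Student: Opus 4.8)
The plan is to reverse-engineer the splitting structure of Theorem~\ref{thm:RicciWP}: given the irreducible base manifold $(B,g_B,u)$, I want to build an $F^k$ and a warped product $M=B\times_u F^k$ whose space of virtual Einstein warping functions is exactly $k+1$-dimensional. The natural candidate for $F^k$ is the simply connected space form of dimension $k$ whose constant curvature is dictated by the quadratic form $\mu_B$ evaluated at $u$: we need $\dim W_{\mu_B(u),\,k+m}(F,g_F)=k+1$, and by Example~\ref{ex:spaceform} this forces the curvature of $F$ to be $\mu_B(u)/(k+m-1)$ (with the usual caveat that if $\mu_B(u)>0$ one could also take $F=\sph^k$, if $\mu_B(u)=0$ one takes $\Real^k$, and if $\mu_B(u)<0$ one takes $H^k$). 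So the first step is simply to \emph{define} $F^k$ this way, note $(F,g_F)$ is complete, and record that $W_{\mu_B(u),k+m}(F,g_F)$ has a basis of $k+1$ functions $v_0,\dots,v_k$.

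The second step is to verify that $M=B\times_u F^k$, with $g_M=g_B+u^2 g_F$, is a smooth complete Riemannian manifold: completeness of $g_B$ (on the manifold-with-boundary $B$, meaning metrically complete) together with $u^{-1}(0)=\partial B$ and $u\in W_{\lambda,b+(k+m)}(B)$ gives smoothness across $\partial B$ by the same boundary analysis used for warped product Einstein metrics (cf.\ Proposition~1.1 of \cite{HPWLCF}), and completeness of $F$ then yields completeness of the total space. This is essentially a repackaging of standard facts, so I expect it to be routine.

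The third and main step is the dimension count $\dim W_{\lambda,(b+k)+m}(M,g_M)=k+1$. The inclusion ``$\supseteq$'' should come from a direct computation: for $v\in W_{\mu_B(u),k+m}(F,g_F)$, the function $uv$ on $M$ should satisfy $\mathrm{Hess}^M(uv)=\tfrac{uv}{m}(\mathrm{Ric}^M-\lambda g_M)$. This is exactly the computation of $W$ for a warped product carried out in Appendix~A of the paper; the key algebraic input is that $u$ solves its Einstein warping equation on $B$ with quadratic-form constant $\mu_B(u)$, and $v$ solves its equation on $F$ with constant $\mu_F(v)$, and these fit together precisely because the fiber curvature was chosen to match $\mu_B(u)$. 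The reverse inclusion ``$\subseteq$'' is the real obstacle: I must show \emph{every} $w\in W_{\lambda,(b+k)+m}(M,g_M)$ has the form $uv$. For this I would argue that any such $w$ is constant along the $F$-factor up to the warping, by differentiating the Hessian equation and using that $\mathrm{Ric}^M$ respects the warped product structure; more efficiently, I would invoke Theorem~\ref{thm:RicciWP} itself applied to (the universal cover of, but $M$ is already simply connected since $B$ and $F$ are) $M$. Indeed $\dim W(M)\ge k+1>1$, so Theorem~\ref{thm:RicciWP} forces $M$ to split as an irreducible base times a $k'$-dimensional space form with $W(M)$ of dimension $k'+1$; matching this with our explicit splitting $B\times_u F^k$ — using irreducibility of $(B,g_B,u)$ to identify the base factor and uniqueness of the space-form fiber — pins down $k'=k$ and hence $\dim W(M)=k+1$. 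The one point requiring care is that the splitting produced by Theorem~\ref{thm:RicciWP} a priori could have a \emph{larger} fiber than $F^k$ if $B$ itself secretly contained extra warping functions; but irreducibility of $(B,g_B,u)$ — precisely the hypothesis we are given — rules this out, which is why the theorem is stated for irreducible base manifolds.
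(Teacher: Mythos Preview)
Your overall plan matches the paper: choose $F^k$ as the simply connected space form of curvature $\mu_B(u)/(k+m-1)$ (with $F=\sph^1$ of the correct radius when $k=1$ and $\partial B\ne\emptyset$, a case you omit), check smoothness across $\partial B$, and then count $\dim W_{\lambda,n+m}(M)$. The paper's proof is short because both inclusions come from Proposition~\ref{prop:dimWPExt}, which is just the Appendix~A computation (Theorem~\ref{thm:WPSpace}) specialized to an irreducible base: any $w\in W(M)$ has the form $\pi_1^*(z)+\pi_1^*(u)\pi_2^*(v)$ with $z\in W_{\lambda,b+(k+m)}(B)$, and irreducibility forces $z$ to be a multiple of $u$, so $w=uv'$ with $v'\in W_{\mu_B(u),k+m}(F)$. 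Since $\dim W_{\mu_B(u),k+m}(F)=k+1$ by the choice of $F$, this gives exactly $\dim W(M)=k+1$.

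Your first instinct for ``$\subseteq$'' --- differentiate the Hessian equation and use the warped-product structure --- is exactly this, and it is the correct route. The ``more efficient'' alternative via Theorem~\ref{thm:RicciWP} has a real gap. Applying that theorem to $M$ yields an abstract splitting $M=B'\times_{u'}F'^{k'}$ with $k'=\dim W(M)-1\ge k$, built from the intrinsic distributions $\mathcal F'$ and $\mathcal B'$ of $M$; you then assert that irreducibility of $B$ forces $k'=k$, but give no mechanism. Theorem~\ref{thm:RicciWP} says nothing about how $B'$ and $F'$ sit relative to your \emph{given} product $B\times F$: to rule out $k'>k$ you must show that an extra $w\in W(M)$ produces an extra element of $W_{\lambda,b+(k+m)}(B)$, and the only way to do that is to restrict $w$ to the (totally geodesic) $B$-leaves and compute --- which is precisely the Appendix~A argument you set aside. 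The detour through Theorem~\ref{thm:RicciWP} saves no work; stick with the direct computation.
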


\begin{rem}  If $\partial B = \emptyset$, $\mu_B(u)>0$, and $k=1$ there are two such metrics corresponding to the choice $F= \mathbb{R}$ or $F = \sph^1$.   Otherwise, the warped product over $B$ with  $\dim W_{\lambda, (b+k) +m} (M,g_M) = k+1$ is unique.    In this case, we call  $M$ the \emph{ $k$-dimensional elementary warped product extension} of $(B,g_B, u)$, see Definition \ref{def:wpextension}.
\end{rem}

\medskip
\section{The warped product structure}

In this section we  prove  Theorem \ref{thm:RicciWP} and Theorem \ref{thm:Existence}.  The proof of Theorem 2.6 is given by Theorems \ref{thm:WPHWP} and \ref{thm:Wsplit} for statement (1), (3) and (4), by Proposition \ref{prop:WPbasemanifold} for statement (2).

First we review the relevant elements from \cite{HPWwprigidity}.   The warped product structure is built up from a natural stratification of the manifold $(M,g)$ coming from the zero set of functions in $W$. Recall that
\begin{equation}
W_{p}=W_{p}\left(M,g\right)=\left\{ w\in W_{\lambda,n+m}\left(M,g\right):w\left(p\right)=0\right\} .\label{eqn:spaceWp}
\end{equation}
Clearly $W_{p}\subset W$ has codimension $1$ or $0.$ The \emph{singular
set} $S \subset M$ is the set of points $p\in M$ where
$W_{p}=W,$ i.e., all functions in $W$ vanish. The \emph{regular set }is
the complement.

Assume that $\dim W >1$.  On the regular set,  we define two orthogonal  distributions, the distribution $\mathcal{F}$ as
\begin{equation}\label{eqn:distributionF}
\mathcal{F}_p = \set{ \nabla w : w \in W_p},
\end{equation}
and $\mathcal{B}$ is its orthogonal complement, i.e.,
\begin{equation*}
T_p M = \mathcal{F}_p \oplus \mathcal{B}_p, \quad \mbox{for all } p \in M - S.
\end{equation*}
Let $k = \dim W_p = \dim W -1$ and $b = n-k$. It follows that $\mathcal{F}_p$ has dimension $k$ and $\mathcal{B}_p$ has dimension $b$.  In Theorem A of  \cite{HPWwprigidity} we showed that these two distributions are integrable and the integral submanifolds give us the warped product structure on $(M,g)$.

\begin{thm}[\cite{HPWwprigidity}]\label{thm:WPHWP}
Let $1\leq k \leq n-1$ and  $\left(M^{n},g\right)$ be a complete simply
connected Riemannian manifold with  $\dim W=k+1$,
then
\[
M=B\times_{u}F
\]
where $u$ vanishes on the boundary of $B$ and $F$ is either the
k-dimensional unit sphere $\mathbb{S}^{k}\left(1\right)\subset\mathbb{R}^{k+1}$,
k-dimensional Euclidean space $\mathbb{R}^{k}$, or the k-dimensional
hyperbolic space $H^{k}$. In the first two cases $k\geq1$ while
in the last $k>1$.\end{thm}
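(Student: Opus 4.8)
The plan is to first produce the warped-product splitting from the distributions $\mathcal{F}$ and $\mathcal{B}$, and then to identify the fiber $F$ by showing that it carries a maximal space of virtual Einstein warping functions, at which point Corollary \ref{prop:spaceform} forces $F$ to be a model space. First I would work on the regular set $M-S$ and differentiate the defining equation $\mathrm{Hess}\,w=wq$, where $q=\frac{1}{m}(\mathrm{Ric}-\lambda g)$, in the directions of $\mathcal{F}$ and $\mathcal{B}$; this shows that the leaves of $\mathcal{B}$ are totally geodesic and the leaves of $\mathcal{F}$ are spherical, which is the content of Theorem A of \cite{HPWwprigidity}. A Hiepko-type decomposition converts this locally into a Riemannian warped product, and since $(M,g)$ is complete and simply connected the local pieces glue into a global warped product $M=B^{b}\times_{u}F^{k}$, $b=n-k$, in which the $\mathcal{B}$-leaves are copies of $B$, the $\mathcal{F}$-leaf through $(b,f)$ is $\{b\}\times F$ with the rescaled metric $u(b)^{2}g_{F}$, and $u\geq 0$ vanishes exactly on the image of $S$, which is therefore $\partial B$; completeness of $(M,g)$ passes to $(F,g_{F})$. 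Simple connectivity is genuinely needed here, since otherwise one only obtains the splitting on the universal cover, cf.\ Remark \ref{rem:RicciWPpi1}.

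The heart of the argument is the following structural description of $W$ for this splitting: there is a constant $c$, namely $c=\mu_{B}(u)$, such that
\[ W_{\lambda,n+m}(M,g)=\{\,u\,\bar v : \bar v\in W_{c,\,k+m}(F,g_{F})\,\}. \]
The inclusion $\supseteq$ is a direct computation with the warped-product formulas for $\mathrm{Hess}$ and $\mathrm{Ric}$ (worked out in the appendix), using that $u$ solves the appropriate equation on $B$ and that $\bar v$ solves it on $F$. The inclusion $\subseteq$ is the substantive part: restricting $\mathrm{Hess}^{M}w=wq$ to a regular fiber and to the base, one must show that $w$ can depend on $B$ only through a scalar multiple of $u$ — that is, that $(B,g_{B},u)$ is an irreducible base manifold — so that $w=u\bar v$ with $\bar v$ a function on $F$ which then necessarily lies in $W_{c,k+m}(F,g_{F})$. (Injectivity of $w\mapsto w|_{F_{b_{0}}}$ on $W$ for a regular fiber $F_{b_{0}}$ is immediate from the evaluation isometry of Theorem \ref{prop:bilinearform} together with the definition $\mathcal{F}_{p}=\{\nabla w:w\in W_{p}\}$, and this already gives $\dim W_{c,k+m}(F,g_{F})\geq k+1$.) Since $\bar v\mapsto u\bar v$ is injective, the displayed identity yields $\dim W_{c,k+m}(F,g_{F})=\dim W_{\lambda,n+m}(M,g)=k+1=\dim F+1$.

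Now Corollary \ref{prop:spaceform}, applied to the complete manifold $(F,g_{F})$, shows that $F$ is a simply connected space of constant curvature or a circle; nontrivial quotients such as the real projective space $\mathbb{RP}^{k}$ are thereby excluded, since they carry only constant solutions. By Example \ref{ex:spaceform} the sign of $c=\mu_{B}(u)$ selects the model: after rescaling $g_{F}$ one gets $F=\sph^{k}(1)$, $\Real^{k}$, or $H^{k}$ according as $c>0$, $c=0$, or $c<0$ — with the caveat that when $k=1$ and $c>0$ the fiber may be either $\sph^{1}$ or $\Real$, and that $H^{1}=\Real$, so the hyperbolic model occurs only for $k>1$. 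In particular, because of the $\sph^{1}$ fibers, the conclusion cannot be upgraded to a simply connected fiber.

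I expect the main obstacle to be the inclusion $\subseteq$, i.e.\ the irreducibility of $(B,g_{B},u)$: this is exactly where the hypothesis $\dim W(M)=k+1$ is turned into rigidity of the fiber, and it seems to require the full warped-product computation of $\mathrm{Hess}$ and $\mathrm{Ric}$ rather than any soft argument. A secondary technical point is the behavior near the singular set $S$ in the first step, ensuring that $u$ extends smoothly and vanishes precisely on $\partial B$. Once both of these are settled, identifying $F$ through Corollary \ref{prop:spaceform} and Example \ref{ex:spaceform} is routine.
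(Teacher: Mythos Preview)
The paper does not reprove this theorem; it is quoted from \cite{HPWwprigidity} (Theorem~A there), where the identification of $F$ as $\mathbb{S}^k$, $\mathbb{R}^k$, or $H^k$ is already part of the statement. The argument there goes through the Killing fields $v\nabla w - w\nabla v$ of Theorem~\ref{prop:bilinearform}: they are tangent to the $\mathcal{F}$-leaves by construction, and since $\dim\wedge^2 W=\binom{k+1}{2}$ one obtains a maximal-dimensional algebra of Killing fields on each fiber, forcing constant curvature. Your proposed route---show $\dim W_{c,k+m}(F)=k+1$ and invoke Corollary~\ref{prop:spaceform}---is a legitimate alternative and is in fact close to how the paper packages the subsequent step in Theorem~\ref{thm:Wsplit}.

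However, your identification of the main obstacle is misplaced, and following it as written would be circular. You claim that the inclusion $W(M)\subseteq\{u\bar v\}$ requires showing that $(B,g_B,u)$ is an irreducible base manifold. In the paper's logic this inclusion is Theorem~B of \cite{HPWwprigidity} (invoked in the proof of Theorem~\ref{thm:Wsplit}), and it comes directly from how the splitting was constructed: the fiber distribution was \emph{defined} as $\mathcal{F}_p=\{\nabla w:w\in W_p\}$, and unwinding this already forces every $w\in W$ to factor as $u$ times a function on $F$---no appeal to irreducibility is needed. Irreducibility (statement~(2) of Theorem~\ref{thm:RicciWP}) is proved \emph{afterwards}, in Proposition~\ref{prop:WPbasemanifold}, and that proof explicitly uses that $F$ is already known to be a fiber space of the correct curvature; so establishing irreducibility first in order to identify $F$, as you propose, presupposes what you are trying to prove. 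Your parenthetical about injectivity of $w\mapsto w|_{F_{b_0}}$ is the right instinct, and together with the universal bound $\dim W(F)\leq k+1$ it would close the argument---but you still owe the verification that $w/u$ restricted to a fiber satisfies the defining equation of $W_{c,k+m}(F)$, which is precisely what Theorem~\ref{thm:Wsplit} supplies.
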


\begin{rem}  From the construction of the warped product,  we also  know that  the regular set is diffeomorphic to $\mathrm{int}(B) \times F$ and that  on the regular set  $\mathcal{B} = TB$ and $\mathcal{F} = TF$.
\end{rem}

We denote by $\pi_1 : M \rightarrow B$ and $\pi_2 : M \rightarrow F$ the projections to each factor. Next we show how the space $W_{\lambda, n+m}(M)$ is determined in terms of the base $B$ and fiber $F$.

\begin{thm} \label{thm:Wsplit}
Let $M =  B \times_{u} F$ as in Theorem \ref{thm:WPHWP}. Then we have
\[
u \in W_{\lambda, b+(k+m)}(B, g_B)
\]
and
\[
W_{\lambda, n+m}(M, g) = \set{\pi_1^*(u)\cdot \pi_2^*(v) : v \in W_{\mu_B(u), k+m}(F,g_F)}.
\]
\end{thm}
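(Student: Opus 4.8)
The plan is to compute how the warped product equation $\mathrm{Hess}\,w = w\,q$ with $q = \frac{1}{m}(\mathrm{Ric}^M - \lambda g)$ decomposes along the factors of $M = B \times_u F$, using the standard warped product formulas for the Hessian, the Ricci tensor, and the Laplacian. First I would fix notation: write $h = \pi_2^*(v)$ for a function pulled back from $F$ and more generally decompose an arbitrary $w \in W_{\lambda, n+m}(M,g)$ using the known geometry of the warped product (here I can invoke the computation of $W(M)$ for a general warped product from Appendix A, which the paper says is available). The Ricci curvature of $M = B\times_u F$ splits into a $\mathcal{B}$-part involving $\mathrm{Ric}^B - \frac{k}{u}\mathrm{Hess}_B u$, a mixed part which vanishes, and an $\mathcal{F}$-part built from $\mathrm{Ric}^F$ and the quantity $u\Delta_B u + (k-1)|\nabla u|^2$. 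The key point is that the $\mathcal{B}\times\mathcal{F}$ mixed block of $\mathrm{Hess}^M w = w\, q$ forces the separation-of-variables structure $w = \pi_1^*(u)\cdot \pi_2^*(v)$ once one knows (from Theorem \ref{thm:WPHWP} and the construction of the warped product) that $\mathcal{B} = TB$, $\mathcal{F} = TF$, and that $u$ itself lies in $W_p$ for regular $p$, i.e. $\nabla u$ spans the directions orthogonal to the fiber.

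The steps, in order, would be: (i) Show $u \in W_{\lambda, b+(k+m)}(B, g_B)$. Restricting the equation $\mathrm{Hess}^M w = w\,q$ to the $\mathcal{B}$-directions for $w = u$ (or more precisely tracing the warped-product Ricci identity on the base factor), the $\frac{k}{u}\mathrm{Hess}_B u$ term coming from the fiber dimension combines with the requirement that $M$ be $\lambda$-Einstein-virtual to yield exactly $\mathrm{Hess}_B u = \frac{u}{k+m}(\mathrm{Ric}^B - \lambda g_B)$. Here the "$m$" gets promoted to "$k+m$" because the fiber $F^k$ contributes $k$ extra warped dimensions on top of the virtual $m$. (ii) Identify the constant $\mu_B(u)$: from equation \eqref{eqn:KK} applied on $B$ with warping function $u$ and fiber dimension $k+m$, the constant $\mu_B(u) = u\Delta_B u + (k+m-1)|\nabla u|^2 + \lambda u^2$ is precisely the scalar that must appear as the Ricci-type constant seen by the fiber. (iii) For the reverse inclusion, take any $v \in W_{\mu_B(u), k+m}(F, g_F)$, set $w = \pi_1^*(u)\,\pi_2^*(v)$, and verify $\mathrm{Hess}^M w = w\,q$ by plugging into the warped-product Hessian formula: on $\mathcal{F}$-directions one needs $\mathrm{Hess}_F v = \frac{v}{m}(\mathrm{Ric}^F - \mu_B(u) g_F)$, which is exactly the defining equation of $W_{\mu_B(u), k+m}(F,g_F)$; on $\mathcal{B}$-directions one uses step (i) for $u$; and the mixed terms cancel using $\nabla u \perp \mathcal{F}$ and $\nabla v \perp \mathcal{B}$ together with the warped-product identity $\mathrm{Hess}^M(\pi_1^* f) = \pi_1^*(\mathrm{Hess}_B f)$ on $\mathcal{B}$ plus a $-\frac{1}{u}\langle \nabla f, \nabla u\rangle u^2 g_F$ correction on $\mathcal{F}$ — one checks this correction is absorbed correctly. (iv) For the forward inclusion, given $w \in W_{\lambda, n+m}(M)$, restrict to a fixed fiber $\{p\}\times F$: the equation becomes $\mathrm{Hess}_F(w|_F) = (\text{something}) \cdot w|_F \cdot g_F + (\text{lower order in }F)$, and combined with the mixed block vanishing one deduces $w$ is a product $u\cdot v$ with $v = v(w)$ a function on $F$ lying in the appropriate space; this is where one leans on the general Appendix A computation of $W$ of a warped product to avoid rederiving separation of variables from scratch.

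The main obstacle I expect is step (iv), the forward inclusion: showing that \emph{every} $w \in W_{\lambda, n+m}(M)$ factors as $\pi_1^*(u)\cdot\pi_2^*(v)$, rather than being some more complicated function on $B\times F$. This requires knowing that the "base component" of any solution is forced to be a multiple of $u$ — i.e. that $\dim W(B)$-type rigidity on the base is already in force — which is precisely why the theorem is stated for $M$ with $\dim W(M) = k+1$ and the splitting of Theorem \ref{thm:WPHWP} with $\mathcal{F}_p = \{\nabla w : w \in W_p\}$ being $k$-dimensional. The careful bookkeeping is to show the evaluation/injectivity machinery of Theorem \ref{prop:bilinearform} pins down the base part, and to handle the boundary behavior where $u \to 0$ (so that the product $u\cdot v$ remains smooth across $\partial B$ even though $1/u$ blows up) — the smoothness of $w = uv$ at the "axis" $\partial B$ is automatic but the converse, extracting a smooth $v$ from a smooth $w$ by dividing by $u$, needs the structure of the warped product near the singular set.
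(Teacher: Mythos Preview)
Your outline for step (i) and the reverse inclusion (iii) is essentially what the paper does, but there is a genuine gap in your plan for both (i) and, more seriously, (iv).

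For step (i): you cannot literally set $w=u$ in $\mathrm{Hess}^M w = \frac{w}{m}(\mathrm{Ric}^M-\lambda g)$, because $\pi_1^*(u)$ is \emph{not} in $W_{\lambda,n+m}(M)$ in general (that would require the constant function $1$ to lie in $W_{\mu_B(u),k+m}(F)$, which fails for the fiber space forms here). The paper instead invokes Theorem~4.2 of \cite{HPWwprigidity}, a result specific to the splitting of Theorem~\ref{thm:WPHWP}, which says directly that $q|_{\mathcal{B}}=\frac{1}{u}\mathrm{Hess}_B u$. Combining this with the warped-product Ricci formula on $\mathcal{B}$ gives $u\in W_{\lambda,b+(k+m)}(B)$ in two lines, with no reference to any particular $w$.

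For step (iv), the forward inclusion: your plan to ``lean on the general Appendix~A computation'' does not close. Theorem~\ref{thm:WPSpace} in Appendix~A yields, in the relevant cases (1.a)/(1.b), functions of the form $\pi_1^*(z)+\pi_1^*(u)\pi_2^*(v)$ with $z\in W_{\lambda,b+(k+m)}(B)$ arbitrary (subject only to $\mu_B(u,z)=0$). To force $z$ to be a multiple of $u$ you would need $B$ to be an irreducible base manifold --- but that is exactly statement~(2) of Theorem~\ref{thm:RicciWP}, proven \emph{afterwards} in Proposition~\ref{prop:WPbasemanifold}, and that proof uses Theorem~\ref{thm:Wsplit} as input. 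So your route is circular. The paper avoids this by citing Theorem~B of \cite{HPWwprigidity}, a companion to the splitting theorem, which asserts directly that for the \emph{specific} warped product produced by Theorem~\ref{thm:WPHWP} one already has $W=\{\pi_1^*(u)\pi_2^*(v)\}$ with $\mathrm{Hess}_F v=-\tau v g_F$. The remaining work is then only to identify $\tau$ with the right constant via the Appendix~A computation (equation~(\ref{eqn:GenWPvert}) at $z=0$), placing $v$ in $W_{\mu_B(u),k+m}(F)$.
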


\begin{proof}

From the results in \cite{HPWwprigidity}, we determine the space $W_{\lambda, n+m}(M)$ in terms of the base $B$ and fiber $F$. First note that for any two vectors $X, Y \in \mathcal{B}$ we have
\[
\left(\mathrm{Ric}^M - \lambda g\right)(X, Y) = \mathrm{Ric}^B(X, Y) - \frac{k}{u}\left(\mathrm{Hess}_B u\right)(X, Y) - \lambda g_B (X, Y).
\]
From \cite[Theorem 4.2]{HPWwprigidity} we know that
\[
\frac{1}{m}\left(\mathrm{Ric}^M - \lambda g \right) |_{\mathcal{B}} = \frac{1}{u}\mathrm{Hess}_B u.
\]
Combining these two equations gives us
\[
\mathrm{Hess}_B u = \frac{u}{k+m}\left(\mathrm{Ric}^B - \lambda g_B\right),
\]
in other words  $u \in W_{\lambda, b+(k+m)}(B, g_B)$.

Let  $\mu_B$  be the quadratic form on the space $W_{\lambda, b+(k+m)}(B, g_B)$,
\begin{equation}\label{eqn:muB}
\mu_B(z) = z \Delta_B z + (k+m-1)\abs{\nabla z}^2_B + \lambda z^2, \quad \text{for } z \in W_{\lambda, b+(k+m)}(B, g_B).
\end{equation}

 From Theorem B  in \cite{HPWwprigidity},  we also know that
\[ W = \set{\pi_1^*(u)\cdot \pi_2^*(v)}, \]
where $v \in C^{\infty}(F)$ satisfies the equation
\[
\mathrm{Hess}_F v = - \tau v g_F,
\]
and  $\tau$ is constant when $k>1$ and a function on $F$ when $k=1$.  A direct calculation in Appendix A (See equation (\ref{eqn:GenWPvert}) with $z=0$) shows that
\[
\mathrm{Hess}_F v = \frac{v}{m}\left(\mathrm{Ric}^F - \mu_B(u)g_F\right),
\]
i.e., we have  $v  \in W_{\mu_B(u), k+m}(F,g_F)$. This finishes the proof.
\end{proof}

\begin{rem}\label{rem:muBumuu}
Note that on the warped product $M = B \times_{u}F$ we have
\[
u \Delta_M u = u \Delta_{B} u + k \abs{\nabla u}_{B}^2.
\]
It follows that $\mu_M(u) = \mu_B(u)$.
\end{rem}

Theorem \ref{thm:WPHWP} and \ref{thm:Wsplit} prove the statements (1), (3) and (4) in Theorem \ref{thm:RicciWP}. Before showing (2)  we prove some facts about warped products, $B\times_u F$,  with $B$ an irreducible base manifold $B$.

\begin{prop} \label{prop:dimWPExt}
Let $(B, g_B, u)$ be a $(\lambda, k+m)$-irreducible base manifold and let $M$ be a metric of the form
\[ M = B \times_u F \]   then $W_{\lambda, n+m} (M)$  consists of functions of the form
\begin{equation*}
\pi_1^*(u) \cdot  \pi_2^*(v)
\end{equation*}
where $v \in W_{\mu_B(u), k+m}(F)$.
\end{prop}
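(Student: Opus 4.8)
The plan is to adapt the argument of Theorem \ref{thm:Wsplit} to the more general setting of Proposition \ref{prop:dimWPExt}, where $B$ is only assumed to be an irreducible base manifold rather than arising as the canonical base of a manifold with $\dim W > 1$. First I would apply the general computation in Appendix A for the space $W$ of a warped product $M = B \times_u F$. For any function $w \in W_{\lambda, n+m}(M)$, decomposing the Hessian equation $\mathrm{Hess}_M w = \frac{w}{m}(\mathrm{Ric}^M - \lambda g)$ into its $\mathcal{B}\mathcal{B}$, $\mathcal{B}\mathcal{F}$, and $\mathcal{F}\mathcal{F}$ components and using the standard warped product curvature formulas, one obtains a coupled system: a horizontal equation forcing $w$ (after averaging or restricting appropriately) to be related to a function in $W_{\lambda, b+(k+m)}(B, g_B)$, a mixed equation, and a vertical equation involving $\mathrm{Hess}_F$ of the fiber part.

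The key step is then to invoke irreducibility of $(B, g_B, u)$: since $W_{\lambda, b+(k+m)}(B, g_B) = \mathrm{span}\{u\}$, any function on $B$ that appears as the "horizontal factor" of a solution $w$ must be a constant multiple of $u$. More precisely, I expect that the general Appendix A computation (equation (\ref{eqn:GenWPvert}) and its horizontal companion) shows that $w$ must be of the form $w = \pi_1^*(z) \cdot \pi_2^*(v) $ where $z \in W_{\lambda, b+(k+m)}(B, g_B)$ — possibly after first arguing that $w$ separates in this way, which is where the mixed $\mathcal{B}\mathcal{F}$ equation and the structure of the distributions $\mathcal{B}, \mathcal{F}$ (integrable, with $\mathcal{F} = TF$) get used. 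Then irreducibility gives $z = cu$ for a constant $c$, and absorbing $c$ into $v$ we get $w = \pi_1^*(u)\cdot \pi_2^*(v)$. Finally, substituting back into the vertical part of the equation and using the Appendix A identity
\[
\mathrm{Hess}_F v = \frac{v}{m}\left(\mathrm{Ric}^F - \mu_B(u) g_F\right)
\]
shows $v \in W_{\mu_B(u), k+m}(F, g_F)$, and conversely any such product lies in $W_{\lambda, n+m}(M)$ by reversing the computation.

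The main obstacle I anticipate is establishing the \emph{separation of variables}: showing that an arbitrary solution $w \in W_{\lambda, n+m}(M)$ must actually split as a product of a function pulled back from $B$ and one pulled back from $F$, rather than being some more general function on the product. In Theorem \ref{thm:Wsplit} this came for free from the heavy machinery of \cite{HPWwprigidity} (Theorem B there), applied to the canonical warped product splitting of a manifold with $\dim W = k+1$. Here $F$ need not be a space form a priori and $M$ need not have $\dim W > 1$, so one cannot directly cite that theorem; instead one must run the Appendix A warped-product computation of $W(B \times_u F)$ from scratch and read off that the only solutions are products of the stated form — essentially because the horizontal equation is an eigenvalue-type problem on $B$ whose solution space is exactly $W_{\lambda, b+(k+m)}(B, g_B)$, which irreducibility pins down to $\mathrm{span}\{u\}$. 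Once the separated form is in hand, the rest is the same bookkeeping as in Theorem \ref{thm:Wsplit} together with Remark \ref{rem:muBumuu}.
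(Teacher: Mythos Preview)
Your overall strategy is exactly the paper's: invoke the Appendix~A computation of $W$ for a general warped product (Theorem~\ref{thm:WPSpace}), then use irreducibility of $B$ to kill the extra base function. But there is one concrete misconception in your outline that would derail the argument as written.

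The separation lemma (Lemma~\ref{lem:wsplitting}) does \emph{not} give $w = \pi_1^*(z)\cdot\pi_2^*(v)$; it gives
\[
w = \pi_1^*(z) + \pi_1^*(u)\cdot\pi_2^*(v),
\]
a sum of a base function and a product. The horizontal Hessian equation (cf.~(\ref{eqn:Hesszhorizontal})) then forces $z \in W_{\lambda, b+(k+m)}(B)$ (using that $u$ is already in this space), and only now does irreducibility give $z = c u$, yielding $w = \pi_1^*(u)\bigl(c + \pi_2^*(v)\bigr)$. At this point you cannot simply cite (\ref{eqn:GenWPvert}) with $z=0$, because $z$ is not zero; you need to check that $c + v \in W_{\mu_B(u), k+m}(F)$. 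This is where the paper does a short case split via Theorem~\ref{thm:WPSpace}: if $\mu_B(u)\neq 0$ (case (1.a)) the normalization $\mu_B(u,z)=0$ forces $c=0$; if $\mu_B(u)=0$ (case (1.b)) the vertical equation gives $\mathrm{Hess}_F v = 0$, and since $F$ is then Ricci flat, $c+v$ lies in $W_{0,k+m}(F)$; case (1.c) gives the product form directly. Once you correct the intermediate form and insert this short case analysis, your proof coincides with the paper's.
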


\begin{proof}
This follows from Theorem \ref{thm:WPSpace} in the appendix.  If $\mu_B(u) \neq 0$ we are in case (1.a) of Theorem \ref{thm:WPSpace} and, since $u$ spans $W_{\lambda, b+(k+m)}(B)$,  we have that $w = \pi_1^*(u)\cdot  \pi_2^*(v)$ and  $v \in W_{\mu_B(u), k+m}(F)$.

If  $\mu_B(u) = 0$, then we are in case (1.b) of Theorem \ref{thm:WPSpace}.  Recall that if $\partial B \neq \emptyset$ then $\mu_B(u) \neq 0$. So  we know that the boundary is empty and that $F = \mathbb{R}^k$. Since $B$ is a $(\lambda, k+m)$-irreducible base manifold,  we can only choose $z$ to be constant multiple of $u$ and so Theorem \ref{thm:WPSpace}  gives us that
\[
w = \pi_1^*(u) \left( C + \pi_2^*(\bar{v}) \right)
\]
where $\bar{v}$ is a linear function. The space $W_{0, k+m}(\mathbb{R}^k)$ is spanned by constant and linear functions. This shows  that  $v= \bar{v} + C \in W_{\mu_B(u), k+m}(F) $ in this case as well.
\end{proof}

This shows that base manifolds can always be extended by an appropriately chosen fiber $F$ to produce metrics with $\dim W>1$.  The nicest choice of  $F$ is the appropriate space form.

\begin{definition}
$(F^k,g_F)$ is called \emph{the fiber space  corresponding to the $(\lambda, (k+m))$-base manifold $(B, g_B, u)$} if it satisfies the following conditions.
\begin{enumerate}
\item  When $k > 1$, $F^k$  is the complete simply connected space form with  sectional curvature $\frac{1}{m+k-1} \mu_{B}(u).$
\item When $k=1$ and $\partial B =  \emptyset$, $F = \mathbb{R}$.
\item When $k=1$ and $\partial B \neq \emptyset$, $F = \mathbb{S}^1_a$ is the circle with radius
\[ a = \sqrt{ \frac{ m}{\mu_B(u)} }. \]
\end{enumerate}
\end{definition}

\begin{rem} When $k>1$, from Example \ref{ex:spaceform}, we see that a fiber space always has  $\dim W_{\mu_B(u), k+m}(F) = k+1$.   When $k=1$ this is also true and follows from Examples 1.9 and 1.10 in \cite{HPWwprigidity}.
\end{rem}

An elementary warped product extension is a warped product of an irreducible  base manifold with the corresponding fiber space.

\begin{definition} \label{def:wpextension}
Let $(B^b,g_B, u)$ be a  $(\lambda,k+m)$-irreducible base manifold and let $F^k$ the fiber space corresponding to $(B, g_B, u)$. The \emph{$k$-dimensional elementary warped product extension} of $B$ is the metric $M = B\times_u F$.
\end{definition}

Note that when the boundary of $B$ is empty, the metric on the extension is always a smooth metric. When $B$ has non-empty boundary this is also true.

\begin{prop} Suppose that $(B, g_B, u)$ is  $(\lambda,k+m)$-base manifold with $\partial B \neq \emptyset$, then the elementary warped product extension of $B$ is a smooth Riemannian manifold.
\end{prop}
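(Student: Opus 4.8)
The plan is to reduce the statement, which is local near $\partial B$ since on $\{u>0\}$ the metric $g_M=g_B+u^2g_F$ is manifestly a smooth warped product, to the already known smoothness of the honest $(k+m)$--dimensional warped product Einstein metric over $B$. The first step is to read off the boundary behaviour of $u$: since $u\in W_{\lambda,b+(k+m)}(B,g_B)$ the function $\mu_B(u)$ of $(\ref{eqn:muB})$ is a constant on $B$, and along $\partial B$, where $u\equiv0$ kills the terms $u\Delta_B u$ and $\lambda u^2$, it equals $(k+m-1)\,|\nabla u|^2$. Because $\partial B\neq\emptyset$ forces $\mu_B(u)\neq0$ while the right--hand side is a nonnegative multiple of $|\nabla u|^2$, we conclude $\mu_B(u)>0$ and $|\nabla u|\equiv c$ on $\partial B$, where $c^2:=\mu_B(u)/(k+m-1)>0$; in particular $u$ vanishes to exactly first order on $\partial B$, so $\partial B=u^{-1}(0)$ is a smooth hypersurface. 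One may also note that $\mathrm{Hess}_B u=\frac{u}{k+m}(\mathrm{Ric}^B-\lambda g_B)$ vanishes on all of $\partial B$ because $u$ does, which by a standard computation with the Gauss formula forces $\partial B$ to be totally geodesic in $B$ and $u$ to have no quadratic term in the normal direction --- these are the first--order compatibility conditions needed below.

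Next, since $\mu_B(u)>0$, the complete simply connected $(k+m)$--dimensional space form $\bar{F}^{k+m}$ of Ricci curvature $\mu_B(u)$ is the round sphere of constant curvature $\mu_B(u)/(k+m-1)=c^2$, and $u$ solves the warped product Einstein equation $(\ref{eqn:WPE})$ on $B$ with fiber dimension $k+m$ and vanishes on $\partial B$. Since $|\nabla u|^2=c^2$ on $\partial B$ is exactly the curvature of this fiber, Proposition~1.1 of \cite{HPWLCF} applies and shows that
\[
E:=B\times_u\bar{F}^{k+m}
\]
is a smooth (Einstein) Riemannian manifold; in its smooth structure it is, near $\partial B$, the total space of the $D^{k+m+1}$--bundle over $\partial B$ obtained by filling in the round $\mathbb{S}^{k+m}$--fibers, with $\partial B$ the zero section.

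Finally I would realize $M$ as a submanifold of $E$. Choose a totally geodesic great $k$--sphere $\Sigma\subset\bar{F}^{k+m}=\mathbb{S}^{k+m}$; it carries the round metric of curvature $c^2$, so for $k>1$ it is isometric to the fiber space $F^k$, while for $k=1$ it is a great circle of circumference $2\pi/c=2\pi\sqrt{m/\mu_B(u)}=2\pi a$, isometric to $F=\mathbb{S}^1_a$. As $\Sigma$ is totally geodesic in $\bar{F}^{k+m}$, the set $B\times\Sigma$ is a totally geodesic submanifold of $E=B\times_u\bar{F}^{k+m}$ whose induced metric is $g_B+u^2g_\Sigma$; near $\partial B$ it lies inside the model $D^{k+m+1}$--bundle as the linear $D^{k+1}$--sub-bundle cut out by $\mathbb{R}^{k+1}\times\{0\}\subset\mathbb{R}^{k+m+1}$, and away from $\partial B$ it is a product of submanifolds, so it is a smooth submanifold of $E$ throughout. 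Hence $M=B\times_u F^k$, which is precisely $B\times\Sigma$ with this metric, is a smooth Riemannian manifold. Note that irreducibility of $(B,g_B,u)$ is never used, which is why the statement is for an arbitrary base manifold.

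The step I expect to be the main obstacle is the middle one --- the smoothness of $E$ across $\partial B$. The first--order compatibility there (the correct cone angle along $\partial B$, guaranteed by $|\nabla u|$ being the constant $c$, together with $\mathrm{Hess}_B u=0$ on $\partial B$) is elementary, but upgrading it to genuine $C^\infty$ regularity requires controlling all higher--order normal derivatives of the metric, and this uses the full strength of the warped product Einstein equation rather than $u\in W$ alone; it is exactly what Proposition~1.1 of \cite{HPWLCF} supplies. I would invoke that result as a black box, so that the rest of the argument is just bookkeeping with warped products and totally geodesic submanifolds.
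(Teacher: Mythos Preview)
Your first paragraph is exactly the paper's proof: evaluate $\mu_B(u)=(k+m-1)|\nabla u|^2$ on $\partial B$ to conclude $\mu_B(u)>0$ and $|\nabla u|\equiv c$ with $c^2=\mu_B(u)/(k+m-1)$, then observe that $c$ is the reciprocal of the fiber sphere's radius in both the $k>1$ and $k=1$ cases. The paper stops there, declaring that the sphere ``has the correct size'' and treating this as sufficient for smoothness. You go further and supply an explicit geometric justification for the higher-order regularity the paper leaves implicit: you embed $M=B\times_u F^k$ as a totally geodesic slice of the genuine Einstein extension $E=B\times_u\mathbb{S}^{k+m}$, whose smoothness across $\partial B$ is precisely what Proposition~1.1 of \cite{HPWLCF} gives, and then note that a great $k$-sphere $\Sigma\subset\mathbb{S}^{k+m}$ cuts out a linear sub-disk-bundle in the smooth normal model near $\partial B$. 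This is correct and is a clean way to avoid reopening the analysis of the Taylor expansion of $u$ and $g_B$. The one caveat is that your embedding needs $k+m$ to be a positive integer so that $\bar F^{k+m}$ exists as a manifold; the paper formally allows $m\in\mathbb{R}^+$, but for actual warped product Einstein metrics $m$ is an integer, so this restriction is harmless in the intended applications.
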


\begin{proof}
Since $u = 0$ on $\partial B$ we have
\begin{eqnarray*}
\mu_B(u) &=& u \Delta u + (m+k-1) |\nabla u|^2 + \lambda u^2 \\
&=&  (m+k-1) |\nabla u|^2.
\end{eqnarray*}
Since $k \geq 1$, it follows that $\mu_B(u) > 0$, and that $|\nabla u|^2$ is constant on $\partial B$ which is equal to $\frac{\mu_B(u)}{m+k-1}$. This shows that $F^k = \sph^k$ has the correct size to make  $M = B\times_u F$ a smooth metric.
\end{proof}

This now gives us Theorem \ref{thm:Existence}.

\begin{proof}[Proof of Theorem \ref{thm:Existence}]
From Proposition   \ref{prop:dimWPExt},  $W_{\lambda, n+m}(M)$ consists of functions of the form
\[ \pi_1^*(u)\cdot  \pi_2^*(v) \qquad  v \in W_{\mu_B(u), k+m}(F). \]
Thus  the elementary warped product extension has $\dim W_{\lambda, n+m}(M) = k+1$.  Moreover, when $k>1$, the fiber space is the unique $k$-dimensional space with $\dim W_{\mu_B(u), k+m}(F) = k+1$, so  it is the unique extension with $\dim W_{\lambda, n+m}(M) = k+1$.

When $k=1$ and $\partial B \neq \emptyset$ the fiber space is the unique choice which makes $M = B \times_u F$ a smooth metric.
\end{proof}

We now turn our attention back to finishing the proof of Theorem  \ref{thm:RicciWP} by showing property (2) holds.  Recall that the warped product splitting constructed in Theorem \ref{thm:WPHWP} has the property that the regular set is diffeomorphic to $\mathrm{int}(B) \times F$ and on the regular set   $\mathcal{B} = TB$ and $\mathcal{F} = TF$.   Property (2) follows from the fact that these conditions  also characterize elementary warped product extensions.

\begin{prop}\label{prop:WPbasemanifold}
Let $k \geq 1$ and suppose that $M^n = B^b \times_u F^k$ is a simply connected warped product manifold such that  $\dim W_{\lambda, n+m}(M) = k+1$.  Assume further that  the regular set is diffeomorphic to $\mathrm{int}(B) \times F$ and that on the regular set  $\mathcal{B} = TB$ and $\mathcal{F} = TF$. Then $(B, g_B, u)$ is  $(\lambda, k+m)$-irreducible base manifold and $M$ is the $k$-dimensional elementary warped product extension of $(B, g_B, u)$.
\end{prop}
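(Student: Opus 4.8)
The plan is to show that the decomposition $M = B \times_u F$ satisfying these hypotheses must coincide (up to the natural identifications) with the canonical warped product splitting of Theorem \ref{thm:WPHWP}, and then read off irreducibility of $(B,g_B,u)$ from Theorem \ref{thm:Wsplit}. First I would invoke Theorem \ref{thm:WPHWP} and its remark to obtain the canonical splitting $M = B' \times_{u'} F'$ with $F'$ a $k$-dimensional space form, $u'$ vanishing on $\partial B'$, the regular set diffeomorphic to $\mathrm{int}(B') \times F'$, and $\mathcal{B} = TB'$, $\mathcal{F} = TF'$ on the regular set. Since the given splitting $M = B \times_u F$ satisfies the same three geometric conditions (the dimension count $\dim \mathcal{F} = k = \dim F$ matches because $\dim W = k+1$), the two foliations by $\mathcal{B}$-leaves and by $\mathcal{F}$-leaves agree. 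Hence the leaves of the given splitting are integral submanifolds of $\mathcal{B}$ and $\mathcal{F}$; in particular $F$ (a leaf of $\mathcal F$) is isometric to a leaf of the canonical $\mathcal F$-foliation, so $F$ is itself a $k$-dimensional space form, and the two base manifolds $B$, $B'$ are isometric via the map induced on the leaf space of $\mathcal B$. The warping functions must then agree up to the constant normalizations built into the warped product presentation.

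Next I would apply Theorem \ref{thm:Wsplit} to the given splitting $M = B \times_u F$: it gives $u \in W_{\lambda, b+(k+m)}(B, g_B)$ and
\[
W_{\lambda, n+m}(M,g) = \set{\pi_1^*(u)\cdot \pi_2^*(v) : v \in W_{\mu_B(u), k+m}(F,g_F)}.
\]
Taking dimensions, $k+1 = \dim W_{\lambda,n+m}(M) = \dim W_{\mu_B(u),k+m}(F)$ provided $u$ really does span a one-dimensional space and every $v$ contributes; but a priori the right-hand side could be larger if $\dim W_{\lambda,b+(k+m)}(B) > 1$, because then one could also warp by other base functions. To rule this out I would argue: if $W_{\lambda,b+(k+m)}(B,g_B)$ contained a function $\tilde u$ independent of $u$, then — since $F$ is a space form with the maximal $\dim W_{\mu_B(u),k+m}(F) = k+1$ — Appendix A (Theorem \ref{thm:WPSpace}) would produce functions in $W_{\lambda,n+m}(M)$ whose gradients, restricted to a fiber, are not tangent to $\mathcal F$, contradicting $\mathcal F_p = \set{\nabla w : w \in W_p}$ having dimension exactly $k$. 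More concretely: the evaluation isometry of Theorem \ref{prop:bilinearform} forces $\dim W_{\lambda,n+m}(M) \le n+1$ with equality iff $M$ is a space form or circle; a careful bookkeeping via Theorem \ref{thm:WPSpace} shows that each extra dimension of $W(B)$ beyond the span of $u$ would push $\dim W(M)$ strictly above $k+1$, which is excluded by hypothesis. Hence $W_{\lambda,b+(k+m)}(B,g_B) = \mathrm{span}\set{u}$, i.e. $(B,g_B,u)$ is a $(\lambda,k+m)$-irreducible base manifold.

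Finally, with irreducibility in hand, Proposition \ref{prop:dimWPExt} (or directly Theorem \ref{thm:Wsplit}) identifies $W_{\lambda,n+m}(M)$ with $\set{\pi_1^*(u)\cdot\pi_2^*(v) : v \in W_{\mu_B(u),k+m}(F)}$, so $\dim W_{\mu_B(u),k+m}(F) = k+1$; combined with $F$ being a $k$-dimensional space form, Example \ref{ex:spaceform} pins down the sectional curvature of $F$ as $\frac{1}{m+k-1}\mu_B(u)$ (for $k>1$), and when $k=1$ the smoothness discussion above and the cases in Definition \ref{def:wpextension} determine $F$ (either $\mathbb{R}$, or $\sph^1$ of the prescribed radius when $\partial B \neq \emptyset$). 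That is exactly the statement that $F$ is the fiber space corresponding to $(B,g_B,u)$ and that $M$ is its $k$-dimensional elementary warped product extension.

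\textbf{Main obstacle.} The crux is the middle step — proving $W_{\lambda,b+(k+m)}(B,g_B)$ is one-dimensional. The geometric conditions ($\mathcal{B}=TB$, $\mathcal{F}=TF$, regular set $\cong \mathrm{int}(B)\times F$) are what prevent "extra" warping functions coming from the base, but translating this into the dimension bound requires the precise description of $W$ on a general warped product from Appendix A and care about the degenerate case $\mu_B(u)=0$ (where linear functions on $F=\mathbb{R}^k$ and the constant can mix). Everything else is assembling already-established structure results.
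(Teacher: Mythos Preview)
Your proposal is correct and follows essentially the same strategy as the paper: invoke Theorem \ref{thm:Wsplit} to obtain $W_{\lambda,n+m}(M)=\{\pi_1^*(u)\cdot\pi_2^*(v)\}$, pin down $F$ as a fiber space via the dimension constraint $\dim W_{\mu_B(u),k+m}(F)=k+1$, and then use Theorem \ref{thm:WPSpace} to rule out any extra $z\in W_{\lambda,b+(k+m)}(B)$. Two minor points of comparison: your initial detour through the canonical splitting $B'\times_{u'}F'$ is unnecessary, since the hypotheses $\mathcal{B}=TB$, $\mathcal{F}=TF$ are precisely what the proof of Theorem \ref{thm:Wsplit} uses, so it applies directly to the given splitting; and the paper replaces your dimension-count for irreducibility with a one-line form contradiction --- an extra $z$ yields $\pi_1^*(z)+\pi_1^*(u)\pi_2^*(v)\in W(M)$, but since every element of $W(M)$ is $\pi_1^*(u)\pi_2^*(\bar v)$, dividing gives $z/u=\bar v-v$ constant, forcing $z=Cu$ --- which sidesteps the bookkeeping you flag as the main obstacle, including the degenerate case $\mu_B(u)=0$.
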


\begin{proof}
From Theorem  \ref{thm:Wsplit}  we know that $u \in W_{\lambda, b+(k+m)}(B)$ and the space $W_{\lambda, n+m}(M)$ consists of the functions
\[
\pi_1^*(u) \cdot \pi_2^*(v) \quad \text{for }v \in W_{\mu_B(u), k+m}(F).
\]

Next we show that $F$ is a fiber space.   When $k=1$,  the condition of $F$ of being a fiber space is  forced by the simple connectivity of $M$  and  the smoothness of the metric $B \times_u F$.  When $k>1$, we already know that $F$ is a space form, so we just need to show that the Ricci curvature is $\frac{k-1}{m+k-1} \mu_B(u)$.  If the Ricci curvature is not $\frac{k-1}{m+k-1} \mu_B(u)$ then by Example \ref{ex:spaceform}, $\dim W_{\mu_B(u), k+m} (F^k)<k+1$.  However, since  $W(M)$ is the space of functions $\pi_1^*(u)\cdot  \pi_2^*(v)$ for $v\in W_{\mu_B(u), k+m}(F)$,  this  contradicts that $\dim W(M)=k+1$.

Now that we know that $F$ is a fiber space, we want  to show  that $B$ is an irreducible base manifold.   We argue by contradiction.   The aim is to show that additional functions in $W_{\lambda, b+(k+m)}(B)$ lift  to generate elements  of  $W_{\lambda, n+m}(M)$ which are ruled out by the fact that $W(M)$ consists only of functions of the form $\pi_1^*(u) \cdot \pi_2^*(v)$.

This follows from  directly computing  $W(M)$ when $M$ is a warped product, which we carry out in Appendix A.   In fact, since  $u \in W_{\lambda, b+(k+m)}(B)$ and $F$ is a fiber space,   we are either  in cases (1.a) or (1.b) of Theorem \ref{thm:WPSpace}.

If $\mu_B(u) \neq 0$ and $B$ is not an irreducible base manifold, then we can find a function $z \in  W_{\lambda, b+(k+m)}(B)$  which is not a multiple of $u$ such that $\mu(u,z)=0$.  By case (1.a) of   Theorem \ref{thm:WPSpace}, we then have
\[ \pi^*_1(z) + \pi^*_1(u)\cdot  \pi^*_2(v)  \in W_{\lambda, n+m}(M) \qquad v \in W_{\mu_B(u), k+m}(F) \]
If $\mu_B(u) = 0$, then since $F$ is a fiber space,  we know that $F$ is $\mathbb{R}^k$ and by case (1.b) of Theorem \ref{thm:WPSpace}, we have that
\[ \pi^*_1(z) + \pi^*_1(u)\cdot  \pi^*_2(v)  \in W_{\lambda, n+m} (M) \] where $v$ satisfies
\[
\mathrm{Hess}_{F} v= - \frac{1}{m+k-1}\mu_{B}(u, z) g_F.
\]
Since $F=\mathbb{R}^k$ note that there always a solution to this equation, no matter the constant $\mu_{B}(u, z)$.

In either case, we have  a function in $W_{\lambda, n+m}(M)$ of the form
\[   \pi^*_1(z) + \pi^*_1(u)\cdot  \pi^*_2(v) \]
where $v$ is some function on $F$ and $z$ is not a constant multiple of $u$.  Note also that, when $\partial B \neq \emptyset$, Proposition 1.7 of \cite{HPWwprigidity} shows that $z$ can be chosen to satisfy Neumann boundary conditions, which implies  that $\pi^*_1(z)$ is a smooth function on $M$.

On the other hand we know functions in $W(M)$ are all of the form $\pi^*_1(u) \cdot \pi^*_2(\bar{v})$ so when $u\neq 0$  we have
\begin{eqnarray*}
\pi^*_1(z) + \pi^*_1(u)\cdot  \pi^*_2(v) &=&\pi^*_1(u)\cdot  \pi^*_2(\bar{v}) \\
\frac{\pi^*_1(z) }{\pi^*_1(u)} &=&  \pi^*_2( \bar{v} - v)
\end{eqnarray*}
Since the left hand side is constant on $F$ this shows that  $\bar{v} - v$ is constant.  But this shows that $z = Cu$, a contradiction to the choice of $z$.
\end{proof}

Finally, we also show how the quadratic form $\mu$ on an elementary warped product extension   can be computed from the quadratic form on $F$.

\begin{prop}\label{prop:muMmuF}
Let $(M^n, g)$ be the $k$-dimensional elementary warped product extension of a $(\lambda, n+m)$-irreducible base manifold $(B, g_B, u)$. Then
\[
\mu_M ( \pi_1^*(u)\cdot \pi_2^*(v)) = \mu_F(v) \quad \text{for any } v \in W_{\mu_B(u), k+m}(F, g_F).
\]
In particular, if $m=1$ then $\mu_M = 0$.  If $m>1$ then $M$ is elliptic, parabolic, or hyperbolic if and only if the corresponding fiber $F$ is.
\end{prop}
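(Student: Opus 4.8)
The plan is to prove the identity $\mu_M(\pi_1^*(u)\cdot\pi_2^*(v)) = \mu_F(v)$ by a direct computation of both sides on the warped product $M = B\times_u F$, and then read off the ``in particular'' statements. Write $w = \pi_1^*(u)\cdot\pi_2^*(v)$ and, for brevity, denote $\pi_1^*(u)$ and $\pi_2^*(v)$ by $u$ and $v$. Since $u$ depends only on the $B$-coordinate and $v$ only on the $F$-coordinate, their $g_M$-gradients are $g_M$-orthogonal, with $\nabla_M u = \nabla_B u$ tangent to $B$ and $\nabla_M v = u^{-2}\nabla_F v$ tangent to $F$ (the factor $u^{-2}$ because the fiber metric is $u^2 g_F$). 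Hence $\nabla_M w = v\,\nabla_B u + u^{-1}\nabla_F v$ and $\abs{\nabla_M w}^2 = v^2\abs{\nabla_B u}^2 + \abs{\nabla_F v}^2$, the norms on the right taken in $g_B$ and $g_F$ respectively.

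Next I would compute the Laplacian. From $g_M(\nabla_M u,\nabla_M v)=0$ the product rule for $\Delta_M$ gives $\Delta_M w = v\,\Delta_M u + u\,\Delta_M v$, and the standard warped-product formulas give $\Delta_M u = \Delta_B u + \frac{k}{u}\abs{\nabla_B u}^2$ and $\Delta_M v = u^{-2}\Delta_F v$. Substituting these into $\mu_M(w) = w\,\Delta_M w + (m-1)\abs{\nabla_M w}^2 + \lambda w^2$ and grouping, the terms involving only $B$-quantities combine to $v^2(u\,\Delta_B u + (k+m-1)\abs{\nabla_B u}^2 + \lambda u^2) = v^2\,\mu_B(u)$, and the remaining terms are $v\,\Delta_F v + (m-1)\abs{\nabla_F v}^2$. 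Since the quadratic form on $W_{\mu_B(u),k+m}(F,g_F)$ is $\mu_F(v) = v\,\Delta_F v + (m-1)\abs{\nabla_F v}^2 + \mu_B(u)\,v^2$, the sum equals exactly $\mu_F(v)$, which is the desired identity. (Taking $v\equiv 1$ recovers $\mu_M(u)=\mu_B(u)$ from Remark~\ref{rem:muBumuu}, a useful consistency check; and much of this bookkeeping already appears in the general warped-product computation of Appendix~A, which could be cited in place of redoing it.)

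For the consequences, recall from Proposition~\ref{prop:dimWPExt} (equivalently statement~(4) of Theorem~\ref{thm:RicciWP}) that $v\mapsto \pi_1^*(u)\cdot\pi_2^*(v)$ is a linear isomorphism from $W_{\mu_B(u),k+m}(F,g_F)$ onto $W_{\lambda,n+m}(M,g)$; by the identity just proved it carries $\mu_F$ to $\mu_M$, so these two quadratic forms are linearly isometric and in particular have the same signature. If $m=1$, the fiber space satisfies $\dim W_{\mu_B(u),k+1}(F,g_F)=k+1\geq 2$, so by the discussion of the case $m=1$ (the remark following Example~\ref{ex:spaceform}, and Appendix~B) $\mu_F$ vanishes identically, hence so does $\mu_M$ on all of $W_{\lambda,n+m}(M,g)$. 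If $m>1$, then (by the trichotomy valid for $m>1$) $\mu_F$ is positive definite, parabolic, or hyperbolic, and therefore $\mu_M$ is of the same type; that is, $M$ is elliptic, parabolic, or hyperbolic precisely when $F$ is.

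The computation presents no serious difficulty: the one point requiring care is keeping track of the $u^2$-scaling of the fiber metric when differentiating, and once the gradient and Laplacian of $w = uv$ are in hand the two quadratic forms are matched by inspection. The ``in particular'' statements are then immediate from the warped-product structure of $W_{\lambda,n+m}(M,g)$ already established in Section~3.
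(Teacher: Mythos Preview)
Your proof is correct and follows essentially the same approach as the paper: both compute $\nabla_M(uv)$ and $\Delta_M(uv)$ via the standard warped-product formulas (using that $\nabla_M u$ and $\nabla_M v$ are orthogonal), substitute into the definition of $\mu_M$, and regroup to recognize $v^2\mu_B(u)+v\,\Delta_F v+(m-1)\abs{\nabla_F v}^2=\mu_F(v)$. Your treatment of the ``in particular'' clauses is in fact more explicit than the paper's, which simply ends after the computation; your appeal to the isomorphism $W(F)\cong W(M)$ and to Appendix~B for the $m=1$ case is exactly the right justification.
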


\begin{proof}
We prove this by straightforward computation. Using the warped product structure $M = B \times_{u} F$ we have
\begin{eqnarray*}
\nabla( \pi_1^*u \cdot \pi_2^* v) &=& v \nabla u + \frac{\nabla^{F} v}{u}, \\
\Delta(\pi_1^*u \cdot \pi_2^* v) &=& v\Delta(\pi_1^*u) + u\Delta( \pi_2^* v)\\
&=& v \Delta_{B} u + k \frac{v}{u} |\nabla u|_{B}^2 + \frac{\Delta_F v}{u}.
\end{eqnarray*}
So we have
\begin{eqnarray*}
\mu( \pi_1^*u \cdot \pi_2^* v) &=& u v\left(v \Delta_B u + k \frac{v}{u}\abs{\nabla u}^2_{B} + \frac{\Delta_{F} v}{u}\right) + (m-1)v^2 \abs{\nabla u}^2_{B} \\
&& + (m-1)\abs{\nabla^F v}_{F}^2 + \lambda u^2 v^2 \\
& = & v \Delta_{F} v + (m-1) |\nabla^F v|_{F}^2 + v^2( u \Delta_{B} u + (m+k-1) |\nabla u|_{B}^2 + \lambda u^2) \\
&=& v \Delta_{F} v + (m-1) |\nabla^F v|_{F}^2 + \mu_B(u) v^2\\
&=& \mu_F(v),
\end{eqnarray*}
which finishes the proof.
\end{proof}

\medskip
\section{Gap theorems}

In this section we collect a few  gap theorems  for $\dim W(M)$. The first is that it is not possible for $\dim W(M)=n$.

\begin{cor} \label{cor:gap} Let  $M^n$ be a simply connected manifold with
\[ \dim W_{\lambda, n+m} (M^n) \geq  n, \]
 then $\dim W_{\lambda, n+m} (M) =  n+1$.
\end{cor}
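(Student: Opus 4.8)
The plan is to assume $\dim W_{\lambda, n+m}(M^n) = k+1$ with $k = n-1$ and derive a contradiction, so that the only possibility consistent with $\dim W \geq n$ is $\dim W = n+1$. Since $M$ is simply connected and complete with $1 < \dim W < n+1$, Theorem \ref{thm:RicciWP} applies and gives a splitting $M = B^b \times_u F^k$ with $k = n-1$, hence $b = 1$. So $B$ is a one-dimensional manifold, possibly with boundary, carrying a nonnegative function $u$ spanning the one-dimensional space $W_{\lambda, 1+(k+m)}(B, g_B)$ with $u^{-1}(0) = \partial B$, and $F^k$ is a $k$-dimensional space form with $\dim W_{\mu_B(u), k+m}(F, g_F) = k+1$.

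The key step is to analyze the $b=1$ base. A one-dimensional complete Riemannian manifold is $\Real$, $\sph^1$, $[0,\infty)$, or a closed interval $[0,a]$. On such a $B$ the equation defining $W_{\lambda,1+(k+m)}(B,g_B)$ reads $u'' = \frac{1}{k+m}(\Ric^B - \lambda)u = -\frac{\lambda}{k+m} u$, since $\Ric^B \equiv 0$ in dimension one. This is a linear second-order ODE with a two-dimensional solution space, so $\dim W_{\lambda, 1+(k+m)}(B,g_B) = 2$ unless a boundary condition cuts it down. When $\partial B = \emptyset$ (so $B = \Real$ or $\sph^1$) no boundary condition is imposed and the space is genuinely two-dimensional, contradicting property (2) of Theorem \ref{thm:RicciWP} that $u$ spans it. When $\partial B \neq \emptyset$, property (1) requires $u$ to vanish on $\partial B$; for $B = [0,\infty)$ the Dirichlet condition at $0$ leaves a one-dimensional solution space, but then $u$ cannot be nonnegative on all of $[0,\infty)$ for a sign-changing oscillatory or exponentially-signed solution — one checks case by case on the sign of $\lambda$ that no nonnegative Dirichlet solution with $u^{-1}(0) = \{0\}$ exists, or if it does (e.g. $u(t) = \sinh$-type for $\lambda<0$ giving $H^n$) that we are actually back in the maximal case. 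For $B = [0,a]$ a Dirichlet condition at both endpoints forces $u = C\sin(\sqrt{\lambda/(k+m)}\,t)$ with $a$ a half-period, but $u$ then vanishes in the interior unless it does not — so $u^{-1}(0) = \partial B = \{0,a\}$ is consistent, and this corresponds precisely to $M$ being $\sph^n$.

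The cleanest route, rather than grinding through all four one-dimensional cases, is to invoke Corollary \ref{prop:spaceform}: we always have $\dim W_{\lambda,n+m}(M^n) \leq n+1$, with equality iff $M$ is a simply connected space form or a circle. So it suffices to rule out $\dim W = n$, i.e. $k = n-1$, $b=1$. Using the splitting $M = B^1 \times_u F^{n-1}$ from Theorem \ref{thm:RicciWP}, property (2) says $u$ spans $W_{\lambda, 1+(k+m)}(B,g_B)$; but the one-dimensional computation above shows this space, as the Dirichlet (or full) solution space of $u'' = -\frac{\lambda}{k+m}u$ on a $1$-manifold, can only be spanned by a single nonnegative function with $u^{-1}(0) = \partial B$ in the cases $B = \Real$ (with $\lambda \le 0$, $u = e^{\pm ct}$) or $B = [0,a]$ (with $\lambda > 0$, $u = \sin$). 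In the first case $M = \Real \times_u F^{n-1}$ with $F$ a space form; tracking $\mu_B(u)$ and applying Example \ref{ex:spaceform} / Theorem \ref{thm:Wsplit} shows $M$ is itself a simply connected space form, whence $\dim W = n+1$ by Corollary \ref{prop:spaceform}, contradicting $\dim W = n$. In the second case $M = [0,a] \times_u \sph^{n-1}$ is $\sph^n$ (the metric being smooth at the poles), again giving $\dim W = n+1$, a contradiction. Either way $k = n-1$ is impossible, so $\dim W \geq n$ forces $\dim W = n+1$.

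The main obstacle I expect is the bookkeeping in the $B = \Real$ case: one must verify that the warped product $\Real \times_u F^{n-1}$ with $u$ an exponential and $F$ the space form of curvature $\frac{\mu_B(u)}{m+k-1}$ is globally isometric to a single simply connected space form, i.e. that the "virtual" Einstein data assemble into genuine constant-curvature geometry — this is exactly the content of Example \ref{ex:Exp} generalized, and the sign of $\mu_B(u)$ (controlled by the signs of the two exponential coefficients) must be matched against the curvature of $F$ so that the total space has constant curvature $\frac{\lambda}{n+m-1}$. Handling the degenerate sub-case $\mu_B(u) = 0$ (so $F = \Real^{n-1}$, $u$ linear, $M = \Real^n$) separately is routine but must not be forgotten.
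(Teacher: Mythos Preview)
Your argument for the case $\partial B = \emptyset$ is exactly right and matches the paper: the ODE $u'' = -\frac{\lambda}{k+m}u$ has a two-dimensional solution space, so $u$ cannot span $W_{\lambda,1+(k+m)}(B)$, contradicting property~(2) of Theorem~\ref{thm:RicciWP}. The gap is that you abandon this argument when $\partial B \neq \emptyset$. Property~(2) asserts that $(B,g_B,u)$ is an \emph{irreducible} base manifold, meaning the full space $W_{\lambda,1+(k+m)}(B)$---with \emph{no} boundary conditions imposed---is spanned by $u$. On an interval the solution space of the ODE is still two-dimensional (the paper notes that beyond the Dirichlet solution there are additional Neumann solutions, citing \cite[Section~1]{HPWwprigidity}), so the same contradiction fires immediately. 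That is the entire proof: no simply connected one-dimensional irreducible base manifold exists.

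Your detour---classifying the nonnegative Dirichlet solutions on $[0,\infty)$ and $[0,a]$ and then arguing that each resulting warped product $B^1\times_u F^{n-1}$ is actually a simply connected space form---is a valid alternative strategy, and your case analysis (exponential/$\sinh$/$\sin$/linear warping giving $H^n$, $\mathbb S^n$, $\mathbb R^n$) is essentially correct. But it is a significant amount of extra work that you leave unfinished, and it is unnecessary once you recognize that property~(2) concerns the full $W(B)$ rather than its Dirichlet subspace. The confusion in your ``cleanest route'' paragraph, where you write ``this space, as the Dirichlet (or full) solution space,'' is exactly the point where the short argument slips away.
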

\begin{proof}
First note that this is true in the one dimensional case as when  $B = \Real$,  $\dim W_{\lambda, 1+(k+m)}(B) = 2$ and  when $B$ is an interval and $\dim (W_{\lambda, 1+(k+m)}(B))_D = 1$ there are also additional functions in $W$ satisfying Neumann boundary conditions. This is  computed explicitly  in \cite[Section 1]{HPWwprigidity}.

Now suppose that $n>1$, and that $\dim W_{\lambda, n+m}(M) = n$, then
\[ M = B^1 \times_u F^{n-1} \]
where  $B$ is one dimensional irreducible base manifold.

 However,  since   $\dim W_{\lambda, 1+(k+m)}(B) = 2$ when $B$ is a line or an interval,   there are no simply connected one dimensional irreducible  base manifolds.
\end{proof}

It is possible for $\dim W_{\lambda, n+m}(M)=n-1$ and from Theorem \ref{thm:RicciWP} they are exactly the elementary extensions of irreducible base surfaces.   There is a complete classification of surfaces $B$ with $ W(B) \neq \{0\}$,  see \cite{Besse} or \cite{HPWLCF}. One consequence of this classification is the following

\begin{cor} \label{cor:2dcompact}
If $M$ is simply connected, compact,  and $\dim W(M) = n-1$, then $M$ is isometric to the Riemannian product $\sph^2 \times \sph^{n-2}$.
\end{cor}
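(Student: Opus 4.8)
The plan is to feed the hypothesis into the structure theorem and then invoke the classification of two‑dimensional base manifolds. Since $\dim W_{\lambda,n+m}(M)=n-1=(n-2)+1$, Theorem~\ref{thm:RicciWP} (applied with $k=n-2$, so $b=2$) writes $M=B^{2}\times_{u}F^{n-2}$, where $F^{n-2}$ is the fiber space and $(B,g_B,u)$ is an irreducible base surface, i.e. $W_{\lambda,n+m}(B,g_B)=\text{span}\{u\}$. First I would extract the easy consequences of compactness: $M$ compact forces $B$ and $F$ compact, and since $F$ is a simply connected space form it must be the round $\sph^{n-2}$; in particular $\mu_B(u)>0$. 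Moreover $B$ is a compact surface, possibly with boundary, and for $n\ge 4$ the projection $M\to B$ identifies $\pi_1(M)$ with $\pi_1(B)$ (the fiber $\sph^{n-2}$ is simply connected and only collapses over $\partial B$), so simple connectedness of $M$ forces $B$ to be either $\sph^{2}$ (if $\partial B=\emptyset$) or the $2$-disk (if $\partial B\neq\emptyset$); the case $n=3$, where the fiber is a circle, is treated the same way and turns out to be vacuous.

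The heart of the proof is to show that $u$ is constant; this is the content of the classification of surfaces with $W\neq\{0\}$ (see \cite{Besse}, \cite{HPWLCF}), and concretely one argues as follows. Suppose $u$ is non-constant. On a surface $\mathrm{Ric}_B=K_B\,g_B$, so the defining equation reads $\mathrm{Hess}_B u=\frac{u(K_B-\lambda)}{n+m-2}\,g_B$ and $u$ is a non-trivial concircular function; a Tashiro-type argument then forces $(B,g_B)$ to be rotationally symmetric with $u=u(r)$ radial. Constancy of $\mu_B(u)$ yields the first-order identity $(u')^{2}=\frac{\mu_B(u)}{n+m-3}-\frac{\lambda}{n+m-1}u^{2}+C\,u^{3-n-m}$ for a constant $C$. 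When $\partial B\neq\emptyset$, smoothness of $u$ where it vanishes forces $C=0$, so $B$ is a round hemisphere and $\dim W_{\lambda,n+m}(B,g_B)=3$ by Example~\ref{ex:spaceform}, contradicting irreducibility. When $\partial B=\emptyset$, the requirement that $g_B$ and $u$ extend smoothly across both poles becomes, after a short computation, a relation between the two turning values of $u$ whose only solution compatible with $\dim W_{\lambda,n+m}(B,g_B)=1$ is the constant one. Either way $u$ must be constant; hence $B$ is Einstein, so a constant-curvature surface, and being compact and simply connected it is the round $\sph^{2}$.

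Finally, rescaling $g_F$ so that $u\equiv 1$, we obtain $M=\sph^{2}\times_{1}\sph^{n-2}$, the Riemannian product of two round spheres (of Gauss curvature $\lambda$ and sectional curvature $\frac{\lambda}{n+m-3}$, so in particular $\lambda>0$, consistent with compactness), which is the assertion. The step I expect to be the real obstacle is ruling out a non-constant warping function: this is exactly the place where the fine structure of two-dimensional base manifolds must be used, either by quoting their classification or by pushing through the smooth-closing-up analysis of the ODE above; everything else is bookkeeping with Theorem~\ref{thm:RicciWP} and the relevant definitions.
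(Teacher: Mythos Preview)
Your approach is the same as the paper's: apply Theorem~\ref{thm:RicciWP} to write $M=B^{2}\times_{u}F^{n-2}$ with $(B,g_B,u)$ a compact irreducible two-dimensional base, and then invoke the classification of such surfaces to conclude $B$ is a $\lambda$-Einstein sphere with $u$ constant. The paper's proof is one sentence: it simply cites this classification (see \cite{Besse}, \cite{HPWLCF}, \cite{CSW}) rather than reproving it.

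Your sketch of the classification is largely on track---the first-order identity you write down is correct, and the $\partial B\neq\emptyset$ case does reduce to the round hemisphere, which is not irreducible. One point to flag in the $\partial B=\emptyset$ case: the condition $\dim W_{\lambda,n+m}(B)=1$ is \emph{automatic} here (Corollary~\ref{cor:compact} applied to $B$, which is compact with $u>0$), so irreducibility is not the constraint that forces $u$ to be constant. The actual obstruction is purely the pair of smooth-closing conditions $h'=\pm 1$ at the two poles together with $P(u_{\min})=P(u_{\max})=0$; eliminating $C$ and $\mu$ from these yields an algebraic relation in $t=u_{\max}/u_{\min}$ whose only solution with $t>0$ is $t=1$. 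This is exactly the content of the two-dimensional classification being cited, and it is not as short as ``a short computation'' suggests---so your instinct that this is the real obstacle is correct, and quoting the classification (as the paper does) is the cleaner route.
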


\begin{proof}
In dimension two the only compact irreducible base manifolds are the $\lambda$-Einstein spheres.  Also see \cite{CSW}.
\end{proof}

A space with $\dim W(M) = n-2$ will be a warped product extension of a three dimensional irreducible base manifold.  There are interesting, non-Einstein three dimensional examples constructed on the sphere by B\"{o}hm. Using the results in \cite{HPWconstantscal} we can, however, classify the examples with  constant scalar curvature.

\begin{cor} Suppose that $M$  is simply connected with constant scalar curvature.  If  $\dim W_{\lambda, n+m}(M) = n-1$ or $n-2$  then $M$ is isometric to the Riemannian product $B \times F$, where  $B$ is a space form of dimension $2$ or  $3$  respectively with Einstein constant $\lambda$, and $F$ is another space form.
\end{cor}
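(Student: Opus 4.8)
The plan is to feed the structure theorem, Theorem~\ref{thm:RicciWP}, into the classification of constant scalar curvature base manifolds obtained in \cite{HPWconstantscal}. Write $\dim W_{\lambda,n+m}(M)=k+1$, so $k=n-2$ in the first case and $k=n-3$ in the second. Since $M$ is complete, simply connected and $1<\dim W<n+1$, Theorem~\ref{thm:RicciWP} gives a splitting
\[
M=B^b\times_u F^k ,\qquad b=n-k\in\{2,3\},
\]
where $(B,g_B,u)$ is an irreducible $(\lambda,k+m)$-base manifold (so $u$ spans $W_{\lambda,b+(k+m)}(B)$) and $F^k$ is the corresponding fiber space, a space form of sectional curvature $\mu_B(u)/(m+k-1)$. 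As $F$ is already a space form, it is enough to prove that $B$ is a space form with $\mathrm{Ric}^B=\lambda g_B$ and that $u$ is a nonzero constant; then $M=B\times F$ is the asserted Riemannian product of space forms.

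First I would transfer the hypothesis to the base. Using the warped product formula for scalar curvature together with the trace $\Delta_B u=\frac{u}{k+m}(\mathrm{scal}^B-b\lambda)$ of the base equation and $\mathrm{scal}^F=\frac{k(k-1)}{m+k-1}\mu_B(u)$, one computes
\[
\mathrm{scal}^M=\frac{m(m-1)}{(k+m)(k+m-1)}\,\mathrm{scal}^B+c,
\]
with $c$ depending only on $\lambda,b,k,m$. Hence, as long as $m>1$, constancy of $\mathrm{scal}^M$ forces $\mathrm{scal}^B$ to be constant.

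Now $(B,g_B,u)$ is a base manifold of dimension $2$ or $3$ with constant scalar curvature. When $b=2$ this already means $B$ has constant Gauss curvature; when $b=3$ I would invoke \cite{HPWconstantscal} to conclude that $B$ is Einstein, hence --- being three dimensional --- of constant curvature. In either case $\mathrm{Ric}^B=(b-1)\kappa\,g_B$ for a constant $\kappa$, and the base equation reads $\mathrm{Hess}_B u=\frac{(b-1)\kappa-\lambda}{k+m}\,u\,g_B$. Computing $W_{\lambda,b+(k+m)}$ for space forms (cf.\ Example~\ref{ex:spaceform}), one sees that $\dim W_{\lambda,b+(k+m)}(B)=1$ only when $\mathrm{Ric}^B=\lambda g_B$ and $\kappa\neq0$, and in that case $W_{\lambda,b+(k+m)}(B)$ consists precisely of the constant functions. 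Since $B$ is irreducible this is the case that occurs, so $\mathrm{Ric}^B=\lambda g_B$ and $u$ is a nonzero constant (in particular $\partial B=u^{-1}(0)=\emptyset$, and $B$ is simply connected because $M$ is). Therefore $M=B\times F$ is a Riemannian product of the space form $B$, which has Einstein constant $\lambda$, with the space form $F$.

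The hard part is the passage, in dimension three, from ``constant scalar curvature'' to ``Einstein'': a $3$-manifold with constant scalar curvature need not be Einstein, and in fact B\"{o}hm's non-Einstein examples on $\mathbb{S}^3$ are precisely $3$-dimensional base manifolds, so this step cannot be finessed and genuinely uses the classification of \cite{HPWconstantscal}. A minor point is that the scalar curvature computation above needs $m>1$; the case $m=1$ is degenerate ($\mu_M\equiv0$ and $\mathrm{scal}^M$ is automatically constant) and would have to be handled separately. The remaining bookkeeping --- ruling out a nonempty boundary and nontrivial quotients --- is automatic once $u$ is forced to be a nonzero constant.
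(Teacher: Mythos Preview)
Your proof is correct and follows the same strategy as the paper: split $M=B\times_u F$ via Theorem~\ref{thm:RicciWP}, push constant scalar curvature down to $B$ by a warped product computation, and then use the low-dimensional classification from \cite{HPWconstantscal} to force $B$ to be a $\lambda$-Einstein space form (hence $u$ constant and the product Riemannian). The paper's proof is terser---it cites Theorem~1.2 of \cite{HPWconstantscal} in one stroke for both $b=2$ and $b=3$ and for the value of the Einstein constant---whereas you separate the steps (Gauss curvature directly for $b=2$, \cite{HPWconstantscal} only for $b=3$, then irreducibility via Example~\ref{ex:spaceform} to pin the constant at $\lambda$); but the content is the same. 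Your explicit scalar curvature identity and your flag on the degenerate $m=1$ case are details the paper leaves implicit.
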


\begin{proof}
The assumption $\dim W \geq n-2$ tells us that $B$ has dimension at most three. A straightforward calculation of the curvatures of a warped product shows that  $M$ having constant scalar curvature implies $B$ does as well. Theorem 1.2 of \cite{HPWconstantscal} implies  that  any  base with constant scalar curvature and dimension at most three must be a space form with Einstein constant $\lambda$.  Hence $M$ is isometric to the product $B \times F$.
\end{proof}

\begin{rem}
This result is also optimal since we constructed constant scalar curvature, four dimensional examples with $\dim W=1$ which are not Einstein, see \cite{HPWconstantscal}.  Taking elementary warped product extensions of these examples give examples  in any dimension with constant scalar curvature and $\dim W = n-3$.
\end{rem}

In general, for constant scalar curvature and $m>1$ we  know that the form of the function $u$ is determined by $\lambda$ and the type of $\mu$.  We summarize this result here.
\begin{prop} \label{prop:ConstScal}
Let $m > 1$ and suppose that $M$ has constant scalar curvature with $W_{\lambda, n+m}(M) \neq \{ 0 \}$.  Then one of the following cases holds.
\begin{enumerate}
\item $M = B \times F$ where $B$ is $\lambda$-Einstein.
\item $M$ is elliptic and
\begin{enumerate}
\item if $\lambda>0$ then $\kappa >0$,  $u = A\mathrm{cos}( \sqrt{\kappa} r)$,
\item if $\lambda=0$ then $u = A r$,
\item if $\lambda<0$ then $\kappa<0$,  $u = A \mathrm{cosh}( \sqrt{-\kappa} r)$.
\end{enumerate}
\item M is parabolic, $\lambda<0$,  $\kappa<0$ and $u = A\exp({\sqrt{-\kappa} r})$.
\item M is hyperbolic, $\lambda<0$, $\kappa<0$ and $u = A\mathrm{sinh}( \sqrt{- \kappa} r)$.
\end{enumerate}
where $\kappa = \frac{\mathrm{scal} - (n-m)\lambda}{m(m-1)}$,  $r:B \rightarrow \mathbb{R}$ is a distance function, and $A > 0$ is a constant.
\end{prop}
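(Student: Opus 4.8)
The plan is to reduce the whole statement to an elementary ODE along a distance function on the base of the warped product splitting, which is exactly the setting of \cite{HPWconstantscal}. Choose a nonzero $u\in W_{\lambda,n+m}(M)$; passing to the universal cover if necessary, Theorem \ref{thm:RicciWP} writes $M=B^{b}\times_{u}F^{k}$ with $k=\dim W-1$, where $u$ spans $W_{\lambda,b+(k+m)}(B,g_B)$ and $F$ is the corresponding fiber space (when $\dim W=1$ we take $B=M$; when $\dim W=n+1$, $M$ is a space form and the conclusion is read off Example \ref{ex:spaceform}). The warped product curvature identities, together with $u\in W_{\lambda,b+(k+m)}(B)$ and the Kim--Kim equation, express $\mathrm{scal}_M$ as an affine function of $\mathrm{scal}_B$ with nonzero constant leading coefficient (the $u$-dependent terms cancel), so $\mathrm{scal}_B$ is constant; a short computation then identifies $\kappa$ with its analogue $\kappa_B=\frac{\mathrm{scal}_B-(b-(k+m))\lambda}{(k+m)(k+m-1)}$ on $B$. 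Hence it suffices to describe the single generator $u$ of $W(B)$ on the constant-scalar-curvature manifold $B$. If $u$ is constant, then $\mathrm{Hess}_B u=0$, so $\mathrm{Ric}^B=\lambda g_B$ and $M=B\times F$ is a Riemannian product with $B$ a $\lambda$-Einstein manifold; this is case (1).

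Assume from now on that $u$ is non-constant. Since $k+m>1$, the form $\mu_B(u)$ — which by \cite{KK} is the $\mu$-constant of the Kim--Kim equation on $B$, hence a genuine constant — rewrites as the pointwise identity
\[
|\nabla u|^{2}+\kappa u^{2}\;\equiv\;c_{0}\;:=\;\frac{\mu_B(u)}{k+m-1},
\]
where I have used $\kappa_B=\kappa$. Differentiating this identity and substituting $\mathrm{Hess}_B u=\frac{u}{k+m}(\mathrm{Ric}^B-\lambda g_B)$ shows, on the regular set $\{u\neq 0\}$, that $\nabla u$ is an eigenvector of $\mathrm{Ric}^B$ with the constant eigenvalue $\lambda-(k+m)\kappa$ and that $\nabla_{\nabla u}\nabla u=-\kappa u\,\nabla u$. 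Consequently $|\nabla u|$ is constant on each level set of $u$, the flow of $\nabla u/|\nabla u|$ is a reparametrized geodesic field, $u$ is a function of a distance function $r\colon B\to\mathbb{R}$, and along $r$ it solves
\[
u''=-\kappa u,\qquad (u')^{2}=c_{0}-\kappa u^{2}.
\]

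Integrating: for $\kappa>0$ one gets $u=A\cos(\sqrt{\kappa}\,(r-r_{0}))$; for $\kappa=0$ one gets $u$ affine in $r$, hence $u=A(r-r_{0})$ (the constant case having been excluded); for $\kappa<0$ one gets a combination of $e^{\pm\sqrt{-\kappa}\,r}$, which after translating $r$ is a constant multiple of $\cosh(\sqrt{-\kappa}\,r)$, of $e^{\pm\sqrt{-\kappa}\,r}$, or of $\sinh(\sqrt{-\kappa}\,r)$ according to the sign of $c_{0}=(u')^{2}+\kappa u^{2}$. Matching these profiles with cases (2)--(4) amounts to recording, for each, the sign of $\lambda$ and the type of $\mu$ — equivalently, by Proposition \ref{prop:muMmuF}, the type of the fiber $F$, which is governed by the sign of $\mu_B(u)=(k+m-1)c_{0}$. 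This uses that $(B,g_B,u)$ is a genuine irreducible base manifold, so that the behaviour of $u$ at its zeros and as $r\to\pm\infty$, together with completeness and smoothness of $B\times_{u}F$, forces the listed sign conditions; this bookkeeping is carried out in \cite{HPWconstantscal}, and combined with $\kappa_B=\kappa$ and $\mu_M(u)=\mu_B(u)$ it yields the stated cases.

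The step I expect to be the main obstacle is precisely this last one — pinning down $\cos$ versus affine versus $\cosh$ versus $\exp$ versus $\sinh$, and showing each forces the stated sign of $\lambda$. It requires more than the radial component of the virtual-Einstein equation: one must combine the first integral $(u')^{2}=c_{0}-\kappa u^{2}$ with the full Hessian identity and with the global structure of $B\times_{u}F$ (completeness, smoothness, and whether $B$ has boundary), since $\lambda$ enters only through the asymptotics of $u$ and the geometry near the singular set.
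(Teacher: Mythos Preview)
The paper does not actually supply a proof of this proposition: it is stated as a summary (``We summarize this result here''), with the content imported from \cite{HPWconstantscal}. So there is no in-paper argument to compare against. Your sketch is the natural one and is essentially what the companion paper does: pass to the base $B$ via Theorem~\ref{thm:RicciWP}, transfer the constant-scalar-curvature hypothesis to $B$ (as in the proof of Corollary~4.3 here), and then analyze the single ODE $u''=-\kappa u$ along a distance function using the first integral $|\nabla u|^2+\kappa u^2=\mathrm{const}$.

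Two remarks. First, your claim $\kappa=\kappa_B$ is correct but deserves a line of justification, since $u$ pulled back to $M$ is generally \emph{not} in $W_{\lambda,n+m}(M)$, so one cannot simply quote $\bar\mu_M(u)=|\nabla u|^2+\kappa u^2$. The clean route is the warped-product scalar curvature formula: after substituting $\Delta_B u=\tfrac{u}{k+m}(\mathrm{scal}_B-b\lambda)$ and $\mathrm{scal}_F=\tfrac{k(k-1)}{k+m-1}\mu_B(u)$, the $|\nabla u|^2/u^2$ terms cancel and one finds $\mathrm{scal}_M-(n-m)\lambda=\tfrac{m(m-1)}{(k+m)(k+m-1)}\bigl(\mathrm{scal}_B-(b-k-m)\lambda\bigr)$, which is exactly $\kappa=\kappa_B$. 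Second, your honest flagging of the final step is right: matching the ODE profiles to the stated signs of $\lambda$ and to the elliptic/parabolic/hyperbolic trichotomy is not purely local and uses completeness, the behaviour of $u$ at $\partial B$, and the smoothness of the warped product; this is precisely the content outsourced to \cite{HPWconstantscal}, and the present paper does not reproduce it.
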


One consequence of this theorem is that in the elliptic case the manifold $M$ must have a singular set or be a Riemannian product.

\begin{cor}
Let $m > 1$ and suppose that $M$ has constant scalar curvature with $S = \emptyset$. If it is elliptic or has $\kappa>0$, then it is isometric to the Riemannian product $B \times F$.
\end{cor}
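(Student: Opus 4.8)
The plan is to show that the hypotheses $S=\emptyset$ together with ellipticity (or $\kappa>0$) force case (1) of Proposition~\ref{prop:ConstScal}, and that the remaining cases (2)--(4) all produce a nonempty singular set. First I would invoke Proposition~\ref{prop:ConstScal}: under the standing assumptions ($m>1$, constant scalar curvature, $W_{\lambda,n+m}(M)\neq\{0\}$) exactly one of the four listed cases holds. If $W(M)=\{0\}$ there is nothing to prove, and if $\dim W(M)=1$ but $M=B\times F$ with $B$ $\lambda$-Einstein we are already in the desired conclusion; so the real content is to rule out cases (2)--(4) when $S=\emptyset$. Note also that ``elliptic'' and ``$\kappa>0$'' overlap but neither contains the other: in case (2)(b)--(c) $M$ is elliptic but $\kappa\le 0$, while $\kappa>0$ only occurs in case (2)(a) among (2)--(4). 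So I would treat ``elliptic'' and ``$\kappa>0$'' as two (overlapping) hypotheses, each of which already restricts us to case (2).

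The key step is then to observe that in case (2) the warping function is $u=A\cos(\sqrt{\kappa}\,r)$, $u=Ar$, or $u=A\cosh(\sqrt{-\kappa}\,r)$ on the base $B$, where $r$ is a distance function and $A>0$. In the first case $u$ vanishes wherever $\cos(\sqrt{\kappa}\,r)=0$, i.e. on the level set $r=\pi/(2\sqrt{\kappa})$ (such points exist because $B$, being a complete base with a distance function, contains a geodesic realizing all values of $r$ on an interval long enough — in fact $u=A\cos(\sqrt\kappa r)$ being a globally defined smooth function on $B$ forces $r$ to range over an interval on which the cosine changes sign, exactly as in the space-form picture of Example~\ref{ex:spaceform}), and in the second case $u=Ar$ vanishes at $r=0$; at any such point every function in $W(M)=\{\,\pi_1^*(u)\cdot\pi_2^*(v)\,\}$ (using Theorem~\ref{thm:RicciWP}(4), or directly $w=uv$) vanishes, so that point lies in $S$, contradicting $S=\emptyset$. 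The case $u=A\cosh(\sqrt{-\kappa}\,r)$ never vanishes, so this sub-case is not excluded by $S=\emptyset$ alone; here I would instead use that $u=A\cosh(\sqrt{-\kappa}\,r)$ forces $\lambda<0$, and recall that $\mu_B(u)$ computed from this $u$ together with Proposition~\ref{prop:muMmuF} pins down the fiber, and then re-examine whether the resulting total space can have empty singular set with the asserted structure — more carefully, one checks that in the elliptic $\lambda<0$ case the base $B=H^b$ (a hyperbolic space, up to the $\lambda$-Einstein reduction) has $u$ a horofunction-type or $\cosh$-type function whose associated warped product is again a space form, and a space form with $\dim W=k+1<n+1$ has nonempty singular set by Example~\ref{ex:spaceform} (where $W=\mathrm{span}\{x^i|_{\sph^n}\}$ etc. — the functions $x^i$ do have common zeros only if... ), so one reduces to the sphere case after all; alternatively, and more cleanly, I would argue that empty singular set plus constant scalar curvature plus ellipticity forces $\dim W=1$, whence case (1) is the only survivor.

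The main obstacle I anticipate is precisely the $u=A\cosh(\sqrt{-\kappa}\,r)$ (elliptic, $\lambda<0$, $\kappa<0$) sub-case of (2)(c), since there $u>0$ everywhere and so ``$S=\emptyset$'' is not violated by a zero of $u$; the clean resolution is to note that a nowhere-vanishing $u\in W_{\lambda,b+(k+m)}(B)$ that spans $W(B)$ means $B$ itself has empty singular set, and by the structure theory $B$ is then an irreducible base manifold with $\mu_B(u)$ of elliptic type (via Proposition~\ref{prop:muMmuF}) and $\lambda<0$; iterating, one either terminates at a $\lambda$-Einstein base (case (1)) or produces an infinite descent, which is impossible by dimension — hence case (1) holds and $M=B\times F$ with $B$ $\lambda$-Einstein. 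I would write the argument in this inductive-on-$b$ form to handle all elliptic sub-cases uniformly, spelling out the $\cos$ and linear sub-cases (which die immediately from $S\neq\emptyset$) first and then the $\cosh$ sub-case by the descent.
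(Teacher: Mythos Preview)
Your overall plan coincides with the paper's one-sentence proof: invoke Proposition~\ref{prop:ConstScal} and observe that under ``elliptic or $\kappa>0$'' only case~(1) is compatible with $S=\emptyset$.

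The obstacle you ran into in subcase~(2)(c) is not genuine; it stems from an error in the statement of Proposition~\ref{prop:ConstScal}, where the warping functions in~(2)(c) and~(4) have been interchanged. Indeed, for $u=A\cosh(\sqrt{-\kappa}\,r)$ with $\kappa<0$ one computes
\[
\bar\mu(u)=|\nabla u|^{2}+\kappa u^{2}=A^{2}(-\kappa)\sinh^{2}(\sqrt{-\kappa}\,r)+A^{2}\kappa\cosh^{2}(\sqrt{-\kappa}\,r)=A^{2}\kappa<0,
\]
hence $\mu(u)=(m-1)\bar\mu(u)<0$, contradicting ellipticity. The correct entry in~(2)(c) is $u=A\sinh(\sqrt{-\kappa}\,r)$, for which the same computation gives $\bar\mu(u)=-A^{2}\kappa>0$ and which vanishes at $r=0$; and the correct entry in~(4) is $A\cosh(\sqrt{-\kappa}\,r)$. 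With this correction every elliptic subcase in~(2) has $u$ vanishing somewhere, hence $S\neq\emptyset$, and the corollary follows immediately --- exactly as the paper asserts. You could have detected the inconsistency yourself by checking $\mu(u)$ against the announced type in each line of the table.

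Your proposed workaround, the descent on $\dim B$, does not work as written. The base $B$ coming from Theorem~\ref{thm:RicciWP} is already irreducible, i.e., $\dim W_{\lambda,b+(k+m)}(B)=1$, so the structure theorem yields no further splitting of $B$ to iterate on. Re-applying Proposition~\ref{prop:ConstScal} to $(B,u)$ and landing in its case~(1) would only say that $B$ itself is a Riemannian product with a $\lambda$-Einstein factor; this neither forces $M$ into case~(1) of the original proposition nor produces a strictly lower-dimensional instance of the same hypotheses, so the induction has no inductive step.
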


\begin{proof} In the previous Proposition \ref{prop:ConstScal}, we see that in the case when $M$ is not isometric to the product $B \times F$, if either $\kappa>0$, or $\mu$ is  positive definite, then $S \neq \emptyset$.
\end{proof}

\medskip
\section{The isometry group}

Since the Ricci tensor is invariant under isometries, the isometry group $\mathrm{Iso}\left(M,g\right)$ acts
on $W$ by composition with functions:
\begin{eqnarray*}
\left(\mathrm{Iso}\left(M,g\right),W\right) & \rightarrow & W\\
\left(h,w\right) & \mapsto & w\circ h^{-1}.
\end{eqnarray*}
Moreover, this action preserves $\mu$.   The derivative of this
action is the directional derivative in the direction of a
Killing vector field
\begin{eqnarray*}
\left(\mathfrak{iso}\left(M,g\right),W\right) & \rightarrow & W \\
\left(X,w\right) & \rightarrow & -D_{X}w
\end{eqnarray*}
which induces a skew action with respect to $\mu$
\[
0=\mu\left(D_{X}v,w\right)+\mu\left(v,D_{X}w\right).
\]

\begin{rem} When $(B,g_B)$  has non-empty boundary, we let $\mathrm{Iso}(B, g_B)$ be the group of isometries  that preserve $\partial B$.
\end{rem}

First we consider the isometry group of base manifolds.

\begin{prop} \label{prop:base}
Suppose that  $(B, g_B)$ is a base manifold. Then we have
\begin{enumerate}
\item For any $h \in \mathrm{Iso}(B, g_B)$, there is a constant $C>0$ such that $u \circ h^{-1} = C u$ and $Dh_p(\nabla u) = C \nabla u |_{h(p)}$.
\item If $X$ is a Killing vector field then there is a constant $K$ so that $D_X u = Ku$.
\end{enumerate}
Moreover, if there exists $h$ with $C \neq 1$, or $X$ with  $K\neq 0$ then $\mu(u) = 0$.
\end{prop}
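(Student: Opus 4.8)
The plan is to use that, as recalled at the start of this section, $\mathrm{Iso}(B,g_B)$ acts linearly on $W = W_{\lambda,b+(k+m)}(B,g_B)$ by $w\mapsto w\circ h^{-1}$, with infinitesimal action $w\mapsto -D_X w$, and that both actions preserve the quadratic form $\mu$. The whole point is that these actions restrict to the Dirichlet subspace $W_D$, which by the definition of a base manifold equals $\mathrm{span}\{u\}$ and is therefore one-dimensional, forcing scalar behaviour.

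First I would verify this restriction. By the convention in the Remark above, every $h\in\mathrm{Iso}(B,g_B)$ preserves $\partial B$, hence carries functions vanishing on $\partial B$ to functions vanishing on $\partial B$, so it acts on $W_D$. Similarly, if $X$ is a Killing field (tangent to $\partial B$ when $\partial B\neq\emptyset$), then for $w\in W_D$ the function $D_X w$ again vanishes on $\partial B$, since the derivative of a boundary-vanishing function along a boundary direction is zero; hence $D_X$ acts on $W_D$. When $\partial B=\emptyset$ there is nothing to check, as $W_D=W$. Because $W_D=\mathrm{span}\{u\}$ is one-dimensional, we obtain constants with $u\circ h^{-1}=Cu$ and $D_X u=Ku$. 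To see $C>0$, note that $u$ is strictly positive on $\mathrm{int}(B)$ and that $h$ preserves $\mathrm{int}(B)$, so $u\circ h^{-1}=Cu$ is strictly positive there, forcing $C\geq0$; and $C\neq0$ since $u\not\equiv0$. Finally, applying the chain rule for the isometry $h$ to $C\nabla u=\nabla(u\circ h^{-1})$ gives $Dh_p(\nabla u)=\nabla(u\circ h^{-1})|_{h(p)}=C\,\nabla u|_{h(p)}$, which is (1); and (2) was just established.

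For the last assertion I would invoke invariance of $\mu$. In the group case, $\mu(u)=\mu(u\circ h^{-1})=\mu(Cu)=C^2\mu(u)$, so $(C^2-1)\mu(u)=0$; since $C>0$, having $C\neq1$ forces $\mu(u)=0$. In the Lie-algebra case, the skew relation $0=\mu(D_X v,w)+\mu(v,D_X w)$ with $v=w=u$ reads $0=2\mu(D_X u,u)=2K\mu(u)$, so $K\neq0$ again forces $\mu(u)=0$.

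There is no serious obstacle here; the only step needing care is the claim that both actions preserve $W_D$, which is exactly where the definition of $\mathrm{Iso}(B,g_B)$ as the group of isometries preserving $\partial B$ — and the corresponding restriction to Killing fields tangent to $\partial B$ — enters, together with the fact, built into the definition of a base manifold, that $W_D$ is precisely one-dimensional.
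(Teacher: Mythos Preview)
Your proof is correct and follows essentially the same line as the paper's: the action of $\mathrm{Iso}(B,g_B)$ and $\mathfrak{iso}(B,g_B)$ on the one-dimensional span of $u$ forces scalar behavior, positivity of $u$ gives $C>0$, the chain rule for isometries gives the gradient relation, and $\mu$-invariance yields the final clause. Your version is in fact slightly more careful than the paper's, which simply writes ``since $W$ is one dimensional'': you make explicit that the relevant one-dimensional space is $W_D$ and verify that both actions preserve it (using that isometries fix $\partial B$ and Killing fields are tangent to it), a point the paper leaves implicit.
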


\begin{proof}
$u( h^{-1}(x)) \in W$  implies that  $u( h^{-1}(x))= Cu(x)$ for some constant $C$ since $W$ is one dimensional.  $u \geq 0$ implies that $u \circ h^{-1} \geq 0$, which implies $C >0$.    We also have,
\begin{eqnarray*}
d(u \circ h^{-1})(X) &=& du( Dh^{-1}(X)) \\
&=& g( \nabla u, Dh^{-1}(X)) \\
&=& g ( Dh( \nabla u), X )
\end{eqnarray*}
which implies that $Dh_p(\nabla u) = \nabla (u \circ h^{-1}) = C  \nabla u |_{h(p)}$.  Since $\mu ( u \circ h^{-1}) = \mu(u)$, if $C \neq 1$ then $\mu(u) = 0$.

If $X$ is a Killing vector field, then $D_X u \in W$. So we also have $D_X u= K u$ for some constant $K$.  The skew-symmetry of the action then gives us
\begin{eqnarray*}
0&=&\mu\left(D_{X}u,u\right)+\mu\left(u,D_{X}u\right) \\
&=&  2K  \mu(u).
\end{eqnarray*}
So either $K =0$ or  $\mu(u) = 0$.
\end{proof}

\begin{definition}
Let $(B, g_B)$ be a base manifold, then we have a well defined group homomorphism into the multiplicative group of positive real numbers
\begin{eqnarray*}
(\mathrm{Iso}(B, g_B), \circ) & \rightarrow & ( \mathbb{R}^{+}, \cdot ) \\
h &\mapsto& C_h,
\end{eqnarray*}
where $C_h$ is the constant so that $u \circ h^{-1} = C_h u$. We define $\mathrm{Iso}(B, g_B)_u$ to be the kernel of this map, or equivalently the subgroup of isometries that preserves $u$.
\end{definition}

We have the following facts about $\mathrm{Iso}(B, g_B)_u$.

\begin{prop} \label{prop:genbase}
Suppose that $(B, g_B)$ is a base manifold. Then $\mathrm{Iso}(B, g_B)_u \subset \mathrm{Iso}(B,g_B)$ is a subgroup of codimension at most one.  Moreover,
\begin{enumerate}
\item if $\mu(u) \neq 0$, then $\mathrm{Iso}(B, g_B)_u = \mathrm{Iso}(B,g_B)$,
\item if $B$ is compact, then $\mathrm{Iso}(B, g_B)_u = \mathrm{Iso}(B,g_B)$,
\item if $h \in \mathrm{Iso}(B,g_B) $ has an interior  fixed point, then $h \in \mathrm{Iso}(B, g_B)_u$, and
\item  any  compact, connected Lie subgroup of $\mathrm{Iso}(B,g_B)$ is contained in $\mathrm{Iso}(B, g_B)_u$.
\end{enumerate}
\end{prop}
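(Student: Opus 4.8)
The plan is to extract everything from the homomorphism $\Phi\colon \mathrm{Iso}(B,g_B)\to(\mathbb{R}^+,\cdot)$, $h\mapsto C_h$, defined just above the statement; all the analytic input needed is already contained in Proposition \ref{prop:base}, so the argument is essentially a short exercise in Lie theory together with the fact that $u>0$ on the interior of $B$ (which holds since $u$ is nonnegative with $u^{-1}(0)=\partial B$).

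First I would establish the codimension claim. Fix an interior point $p\in\mathrm{int}(B)$; then $u(p)>0$, and the identity $u\circ h^{-1}=C_h u$ gives $C_h=u(h^{-1}(p))/u(p)$, which exhibits $\Phi$ as a smooth homomorphism. Hence $\mathrm{Iso}(B,g_B)_u=\ker\Phi$ is a closed Lie subgroup, and its Lie algebra is the kernel of $d\Phi_e\colon\mathfrak{iso}(B,g_B)\to\mathbb{R}$, which by Proposition \ref{prop:base}(2) is the map $X\mapsto K$ with $D_Xu=Ku$. Since the target is one dimensional, $\dim\mathrm{Iso}(B,g_B)-\dim\mathrm{Iso}(B,g_B)_u\le 1$.

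The four moreover-statements are then quick. For (1), this is the contrapositive of the last assertion of Proposition \ref{prop:base}: if some $h$ had $C_h\neq 1$ then $\mu(u)=0$, so $\mu(u)\neq 0$ forces every $C_h=1$. For (3), evaluating $u\circ h^{-1}=C_h u$ at an interior fixed point $q$ of $h$ gives $u(q)=C_h u(q)$ with $u(q)>0$, hence $C_h=1$. For (4), if $G\subset\mathrm{Iso}(B,g_B)$ is compact and connected then $\Phi(G)$ is a compact connected subgroup of $\mathbb{R}^+$, necessarily trivial, so $G\subset\ker\Phi$. Finally (2) follows because, when $B$ is compact, $\mathrm{Iso}(B,g_B)$ is itself a compact Lie group (by Myers--Steenrod, taking isometries preserving $\partial B$ in the boundary case), so its image under $\Phi$ is a compact subgroup of $\mathbb{R}^+$ and hence $\{1\}$; alternatively one integrates $u\circ h^{-1}=C_h u$ against the Riemannian volume measure and uses that $\int_B u>0$ because $u>0$ on the dense interior.

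I do not expect a genuine obstacle here, since the substantive content is already packaged in Proposition \ref{prop:base}. The only points that need care are checking that $\Phi$ is smooth (so that ``codimension'' is a meaningful Lie-theoretic statement and $d\Phi_e$ can be used), and, in part (2), remembering to invoke compactness of the isometry group under the convention that isometries preserve the boundary.
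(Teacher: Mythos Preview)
Your argument is correct and follows the same overall strategy as the paper: everything is read off from the homomorphism $\Phi$ into $(\mathbb{R}^+,\cdot)$, and parts (1) and (3) are handled identically. The only differences are in (2) and (4). For (4) the paper argues at the infinitesimal level, observing that if $D_Xu=Ku$ with $K\neq 0$ then $u$ grows exponentially along the flow of $X$, contradicting compactness of $G$; your version, that $\Phi(G)$ is a compact connected subgroup of $\mathbb{R}^+$ and hence trivial, is a cleaner packaging of the same idea. For (2) the paper simply notes that $u$ and $u\circ h^{-1}$ attain the same positive maximum on a compact $B$, forcing $C_h=1$; your route via Myers--Steenrod (or the volume integral) is a valid alternative, though the maximum-value argument is more elementary and avoids any worry about the boundary case.
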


\begin{proof}
$\mathrm{Iso}(B, g_B)_u$ has codimension at most one because it is the kernel of  a homomorphism into a one-dimensional group.  $\mu(u) \neq 0$ implies that  $\mathrm{Iso}(B, g_B)_u = \mathrm{Iso}(B,g_B)$  was proven in the previous Proposition \ref{prop:base}.

If $B$ is compact, then $u$ has a positive maximum value and $u \circ h^{-1}$ has the same maximum, showing that $C_{h}=1$. Moreover if $h$ has an interior fixed point, $x$, then $u(x) = u(h(x))>0$, so $C_{h}=1$.

Finally, if $X$ is a Killing vector field coming from  $G \subset \mathrm{Iso}(M,g)$  a compact, connected  Lie group of positive dimension, then $D_X u = 0$. Otherwise since $K \neq 0$, $u$ must grow exponentially along the integral curve of $X$, contradicting that $G$ is compact.
\end{proof}

\begin{rem} It is also worth pointing out that if $\partial B \neq \emptyset$, then $\mu(u)$ and $m-1$ have the same sign. \end{rem}

Next we turn our attention to the warped product $M = B \times_{u} F$ with $\dim W_{\lambda, n+m}(M)>1$. First we state a lemma about when a map on the base of a warped product can be extended to an isometry of the total space.

\begin{lem}  \label{lem:isomlift}
Let $M = B\times F$ be a warped product with the metric
\[
g = g_B + u^2 g_F.
\]
A map of the form
\[
h = h_1 \times h_2 \quad \text{with}\quad h_1:B \rightarrow B \quad  h_2: F \rightarrow F
\]
is an isometry of $M$ if and only if $ h_1 \in \mathrm{Iso}(B, g_B)$, $u\circ h_1^{-1} = C u$ for some constant $C$, and $h_2$ is a $C$-homothety of $(F, g_F)$.
\end{lem}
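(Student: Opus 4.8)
The plan is a blockwise comparison of $h^*g$ with $g$. At a point $(p,q)\in M$ the tangent space splits as $T_pB\oplus T_qF$, the metric is $g_{(p,q)}=g_B|_p\oplus u(p)^2\,g_F|_q$, and since $h=h_1\times h_2$ its differential $Dh_{(p,q)}=D(h_1)_p\oplus D(h_2)_q$ respects this splitting. Note also that a diffeomorphism of $M$ of product form forces $h_1$ and $h_2$ to be diffeomorphisms of $B$ and $F$, with $h^{-1}=h_1^{-1}\times h_2^{-1}$. Consequently the mixed $BF$-block of $h^*g$ vanishes, matching that of $g$ automatically, and it suffices to treat the $BB$- and $FF$-blocks. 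For $X,Y\in T_pB$ one has $(h^*g)(X,Y)=g_B|_{h_1(p)}(D(h_1)_pX,D(h_1)_pY)$, so matching $g_B|_p(X,Y)$ is exactly the assertion that $h_1$ is an isometry of $(B,g_B)$; being a diffeomorphism of $B$, it then automatically preserves $\partial B=u^{-1}(0)$ and restricts to a diffeomorphism of $\mathrm{int}(B)$.

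For the $FF$-block, for $V,W\in T_qF$ one has $(h^*g)(V,W)=u(h_1(p))^2\,g_F|_{h_2(q)}(D(h_2)_qV,D(h_2)_qW)$, and matching $u(p)^2\,g_F|_q(V,W)$ rearranges to
\[
(h_2^*g_F)|_q=\frac{u(p)^2}{u(h_1(p))^2}\,(g_F)|_q
\qquad\text{for all }p\in\mathrm{int}(B),\ q\in F.
\]
This is the one substantive step. The left side does not involve $p$, so the scalar $u(p)^2/u(h_1(p))^2$ is independent of $p$; it also does not involve $q$, so it is a single positive constant, say $C^2$ with $C>0$ (positivity since $h_2$ is a diffeomorphism). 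The displayed identity then says $h_2^*g_F=C^2g_F$, i.e.\ $h_2$ is a $C$-homothety, and $u(p)^2=C^2u(h_1(p))^2$; taking positive square roots (legitimate as $u\ge 0$) gives $u=C\,(u\circ h_1)$ on $\mathrm{int}(B)$, hence $u\circ h_1^{-1}=Cu$ there and, by density of $\mathrm{int}(B)$ and continuity, on all of $B$. This proves the "only if" direction.

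Conversely, given $h_1\in\mathrm{Iso}(B,g_B)$ with $u\circ h_1^{-1}=Cu$ and $h_2$ a $C$-homothety, the product map $h=h_1\times h_2$ is a diffeomorphism and the block computation run in reverse gives $h^*g=g$: the $BB$-block is $h_1^*g_B=g_B$, the mixed block vanishes, and at $(p,q)$ the $FF$-block is $u(h_1(p))^2\,(h_2^*g_F)|_q=C^2u(h_1(p))^2\,(g_F)|_q=u(p)^2\,(g_F)|_q$, using $u(p)=C\,u(h_1(p))$, which is $u\circ h_1^{-1}=Cu$ evaluated at $h_1(p)$. The only delicate point in the argument is the separation-of-variables step that isolates the constant $C$; it uses only that $u>0$ on the dense open set $\mathrm{int}(B)$ and that $h_2$ is a diffeomorphism, and everything else is a direct unwinding of the warped product metric together with continuity to reach $\partial B$.
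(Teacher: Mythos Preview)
Your proof is correct and follows essentially the same approach as the paper: a blockwise comparison of $h^*g$ with $g$, where the horizontal block forces $h_1$ to be an isometry of $B$ and the vertical block yields the separation-of-variables step that makes $u(p)^2/u(h_1(p))^2$ constant, hence $u\circ h_1^{-1}=Cu$ and $h_2$ a $C$-homothety. You are slightly more explicit than the paper about the converse direction and the continuity extension across $\partial B$, but the argument is the same.
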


\begin{proof}
Fix a point $(x,y) \in M$ and let $X,Y$ be two vectors in $TB_{(x,y)}$. Then we have
\[
g ( Dh(X), Dh(Y) )  |_{h(x,y)} = g_B(Dh_1(X), Dh_1(Y)) |_{h_1(x)}.
\]
So $h$ is an isometry on horizontal vectors if and only if  $h_1$ is an isometry of $B$.

Now let $U,V$ be two vectors in $TF_{(x,y)}$. The assumption that $h$ is an isometry implies that
\begin{eqnarray*}
u^2(x) g_F (U, V)|_{y} &=& g ( U,V) |_{(x,y)}\\
&=& g  ( Dh(U), Dh(V) ) |_{h(x,y)} \\
&=&  u^2(h_1(x)) g_F(Dh_2(U), Dh_2(V))|_{h_2(y)}
\end{eqnarray*}
which tells us
\[
g_F (U, V)|_{y} = \frac{u^2(h_1(x))}{u^2(x)}  g_F(Dh_2(U), Dh_2(V))|_{h_2(y)}.
\]
This implies that  the quantity $\frac{u^2(h_1(x))}{u^2(x)}$ must be constant, or equivalently that  $u\circ h_1^{-1} = C u$, for some constant $C$.  Plugging this back into the previous equation tells us that
\[
g_F(Dh_2(U), Dh_2(V))|_{h_2(y)} = C^2 g_F(U,V)|_{y},
\]
i.e., $h_2$ is a $C$-homothety of $(F, g_F)$.
\end{proof}

We now  compute the full isometry group of an elementary warped product extension from the isometry groups of $B$ and $F$.

\begin{thm}\label{thm:isometrygroup}
Let $M$ be simply connected with  $\dim W_{\lambda, n+m}(M)=k+1>1$. Then the isometry group of $M$ consists of  maps  $h:M \rightarrow M$  of the form
\[
h = h_1 \times   h_2 \quad \text{with} \quad h_1: B \rightarrow B \quad h_2: F \rightarrow F,
\]
where $h_1 \in \mathrm{Iso}(B, g_B)$ and
\begin{enumerate}
\item If $\mu(u) \neq 0$ then $h_2 \in \mathrm{Iso}(F, g_F)$.
\item If $\mu(u) = 0$ then  $h_2$ is a $C$-homothety of $\mathbb{R}^k$  where $C = C_{h_1}$ is the constant so that $u \circ h_1^{-1} = C_{h_1} u$.  Namely,
\[
h_2(v) = b + C A(v) \quad \text{with} \quad b \in \mathbb{R}^k \text{ and } A \in \mathrm{O}(\mathbb{R}^k)
\]
\end{enumerate}
\end{thm}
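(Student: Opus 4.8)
The plan is to reduce the statement to the product-map criterion of Lemma~\ref{lem:isomlift} together with the facts about base manifolds recorded in Proposition~\ref{prop:base}. By Theorem~\ref{thm:RicciWP} and Proposition~\ref{prop:WPbasemanifold} we may take $M = B \times_u F$ to be the $k$-dimensional elementary warped product extension of a $(\lambda, k+m)$-irreducible base manifold $(B, g_B, u)$, with $F$ the corresponding fiber space, so that on the regular set $M - S$ the canonical distributions are $\mathcal{B} = TB$ and $\mathcal{F} = TF$. The real content is then to show that an \emph{arbitrary} $h \in \mathrm{Iso}(M, g)$ is of the form $h_1 \times h_2$; once that is in hand, Lemma~\ref{lem:isomlift} at once yields $h_1 \in \mathrm{Iso}(B, g_B)$, a constant $C$ with $u \circ h_1^{-1} = C u$, and $h_2$ equal to a $C$-homothety of $(F, g_F)$.

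To establish the splitting, first note that since $h$ preserves the Ricci tensor it acts on $W = W_{\lambda, n+m}(M)$ by $w \mapsto w\circ h^{-1}$; hence it permutes the common zero set $S$ of $W$, and so preserves the regular set. Because $\mathcal{F}_p = \{\nabla w : w \in W_p\}$ is built only from $W$ and $g$, the differential $Dh$ carries $\mathcal{F}$ to $\mathcal{F}$, and it carries the $g$-orthogonal complement $\mathcal{B}$ to $\mathcal{B}$. On $M - S \cong \mathrm{int}(B) \times F$ these are exactly the integrable distributions tangent to the two factors, whose (connected) leaves are the fibers $\{x\} \times F$ and the slices $\mathrm{int}(B) \times \{y\}$. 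An isometry carrying each of these two complementary foliations to itself must send fibers to fibers and slices to slices, so it descends to diffeomorphisms $h_1$ of $\mathrm{int}(B)$ and $h_2$ of $F$ with $h(x,y) = (h_1(x), h_2(y))$ on $M - S$; since $M - S$ is open and dense and $h$ is continuous, this product form holds on all of $M$, with $h_1$ extending to a boundary-preserving diffeomorphism of $B$. Lemma~\ref{lem:isomlift} now applies.

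Finally we identify $C$. By Proposition~\ref{prop:base} applied to $(B, g_B, u)$, the constant $C$ in Lemma~\ref{lem:isomlift} coincides with $C_{h_1}$, and if $C \neq 1$ then $\mu(u) = 0$. Thus if $\mu(u) \neq 0$ we must have $C = 1$, hence $h_2 \in \mathrm{Iso}(F, g_F)$, which is case (1). If $\mu(u) = 0$, then by Remark~\ref{rem:muBumuu} also $\mu_B(u) = 0$, so the fiber space corresponding to $(B, g_B, u)$ has zero sectional curvature, i.e.\ $F = \mathbb{R}^k$; a $C$-homothety of flat Euclidean space is an affine map $v \mapsto b + C A(v)$ with $b \in \mathbb{R}^k$ and $A \in \mathrm{O}(\mathbb{R}^k)$, and $C = C_{h_1}$, which is case (2). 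I expect the main obstacle to be the splitting step — rigorously upgrading "$Dh$ preserves $\mathcal{F}$ and $\mathcal{B}$ over $M - S$" to "$h = h_1 \times h_2$ on all of $M$" — since $S$ may be a locus along which $\partial B$ is collapsed, just as the origin is in polar coordinates, so one has to argue on the open dense regular set and extend the continuous map $h$ rather than trying to define the factor maps on $S$ directly.
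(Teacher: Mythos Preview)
Your proposal is correct and follows essentially the same route as the paper's proof: show that $h$ preserves $\mathcal{F}$ and $\mathcal{B}$ via the action on $W$, deduce the product form $h = h_1 \times h_2$ on the regular set, then invoke Lemma~\ref{lem:isomlift} and Proposition~\ref{prop:base} to pin down $h_1$, $h_2$, and the constant $C$. The only cosmetic difference is that the paper argues the splitting by noting $Dh$ is block diagonal (so the cross partials of $h_1, h_2$ vanish), whereas you phrase it as $h$ sending leaves of the two foliations to leaves; these are the same observation, and your extra care about extending across $S$ is, if anything, more thorough than the paper's treatment.
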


\begin{proof}
First we show that that isometries of $M$ preserve the distributions $\mathcal{B}$ and $\mathcal{F}$. Let $w \in W_p$ and set $v = w \circ h^{-1}$. Then $v(h(p)) = w(p)=0$ and so $v\in W_{h(p)}$. This shows that isometries preserve the singular set.  Moreover,  since  $\nabla v|_{h(p)} = Dh_p(\nabla w|_p)$, $Dh_p$ maps $\mathcal{F}_p$  to $\mathcal{F}_{h(p)}$.  Since $\mathcal{B}$ is the orthogonal complement of $\mathcal{F}$ and $h$ is an isometry, $Dh$ also preserves $\mathcal{B}$.

Since $h$ preserves the singular set, on the regular set, which is diffeomorphic to $\mathrm{int}(B) \times F$, we have
\begin{eqnarray*}
h : \mathrm{int}(B) \times F &\rightarrow& \mathrm{int}(B) \times F  \\
(x,y) &\mapsto & (h_1(x,y), h_2(x,y)).
\end{eqnarray*}
The differential is
\begin{eqnarray*}
Dh: TB_x \times TF_y \rightarrow TB_{h_1(x,y)} \times TF_{h_2(x,y)}.
\end{eqnarray*}
The fact that $Dh$ preserves the distributions says that this map is block diagonal with respect to the splitting.   This shows that the derivative of $h_1$ in the $F$ direction is zero and the derivative of $h_2$ in the $B$ direction is zero, i.e., $h_1=h_1(x)$ and $h_2= h_2(y)$.

Since $B$ is a base manifold we know that there is a constant $C$ such that  $u \circ h_1^{-1} = C u$. When $\mu(u) \neq 0$ we know that $C=1$  for every $h_1$. So this implies that $h_2$ is an isometry of $(F, g_F)$. Applying Lemma \ref{lem:isomlift} then tells us that all such maps of the form $h_1 \times h_2$  are isometries.

When $\mu(u)=0$, it is possible to have $C \neq 1$.  In this case, $F = \mathbb{R}^k$ and  $h_2$  can be any  $C$-homothety, i.e., a map of the form
\[
h_2(v) = b + C A(v) \quad \text{with} \quad b \in \mathbb{R}^k \text{ and } A \in \mathrm{O}(\mathbb{R}^k).
\]
This finishes the proof.
\end{proof}

\begin{rem}
An exercise in O'Neill states that only the first case is possible. However we see that the second case definitely appears when $F$ is $\mathbb{R}^k$.
\end{rem}

\begin{rem}
Note that, even when $\mu(u) = 0$ we get that $h_2$ is an isometry as long as we have $\mathrm{Iso}(B, g_B)_u = \mathrm{Iso}(B, g_B)$. In general the space of product maps
\[
\mathrm{Iso}(B, g_B)_u \times \mathrm{Iso}(\mathbb{R}^k)
\]
is a codimension one subgroup of $\mathrm{Iso}(M, g)$ and it gives us a short exact sequence
\[
1 \rightarrow \mathrm{Iso}(\mathbb{R}^k) \rightarrow \mathrm{Iso}(M, g) \rightarrow \mathrm{Iso}(B, g_B) \rightarrow 1.
\]
Moreover, the map on the right, given by projection onto the first factor $h_1$, has a right inverse
\begin{eqnarray*}
h_1 \mapsto ( h_1, C_{h_1} \mathrm{id}_F).
\end{eqnarray*}
This implies that $\mathrm{Iso}(M, g)$ is a semi-direct product of $\mathrm{Iso}(B, g_B)$ with $\mathrm{Iso}(\mathbb{R}^k)$ and the representation giving the group operation is the map
\begin{eqnarray*}
\phi :  \mathrm{Iso}(B, g_B) & \rightarrow & \mathrm{Aut}(\mathrm{Iso}(\mathbb{R}^k) )   \\
h_1 & \mapsto & \phi_{h_1} \left( v \mapsto b + A(v)\right) =  \left(v \mapsto C_{h_1} b + A(v)\right).
\end{eqnarray*}
\end{rem}

Finally we prove Corollary \ref{cor:muunique}.

\begin{proof}[Proof of Corollary \ref{cor:muunique}]
We have
\[ M = B \times_u F^k \qquad w_1 = uv_1 \qquad w_2 = uv_2 \]
where $F^k$ is a space form and $v_i \in W(F)$ are linearly independent.  From Proposition \ref{prop:muMmuF}  we also have $\mu(v_1) = \mu(v_2)$.

If we show that there is an isometry $\phi$ of $F$ so that  $v_2 = C (v_1 \circ \phi)$, then by Lemma \ref{lem:isomlift} this maps lifts to an isometry of $M$. Abusing notation,  we also let $\phi$ denote  the lifted isometry of $M$.  The lifted isometry then clearly has the property that
\[  w_2 =  uv_2 = C( u v_1 \circ \phi) = C( w_1 \circ \phi). \]
Therefore, the result will follow if it is true for simply connected space forms.

In these cases, $(W, \mu)$ is isometric to $\mathbb{R}^{k+1}$ with quadratic form  that is elliptic (standard Euclidean metric) when $F=\sph^k$, hyperbolic (standard Minkowski metric) when $F=H^k$, or semi positive-definite with nullity one when $F=\mathbb{R}^k$.   This  isometry is given by the evaluation map at a point $p$, which we denote by $e_p$.

From the evaluation maps, we can also see that the isometries of $F$ act on $\mathbb{R}^{k+1}$ as follows. Fix a point $p\in F$ and let $\phi$ be an isometry of $F$ such that $\phi(p)=q$. Then since the map $w \rightarrow w \circ \phi$ preserves the quadratic form $\mu$, the  map
\[ e_q \circ \phi^* \circ e_p^{-1} : (\mathbb{R}^{k+1}, \mu_p) \rightarrow (\mathbb{R}^{k+1}, \mu_q) \]
is an isometry for each $\phi$. This shows that the isometries of $F$ are in correspondence with a $\frac{k(k+1)}{2}$-dimensional subgroup of  the linear maps of $\mathbb{R}^{k+1}$ which preserve the connected components of the levels of $\mu$.   This correspondence can also be seen infinitesimally using the the Killing vector fields constructed in Theorem \ref{prop:bilinearform}.  See section 8.2  of \cite{HPWwprigidity}.

In the elliptic case this subgroup contains  $SO(k+1)$.   Since $SO(k+1)$ acts transitively on the distance spheres of $\mathbb{R}^{k+1}$, which are precisely the levels of $\mu$ in W, this gives the result.  Similar arguments give the result in the degenerate case and hyperbolic case.

The arguments give us that $C$ can be chosen to be $1$ is the elliptic case (since the levels of $\mu$ are connected), and it can be chosen to be $\pm 1$ in the hyperbolic case.  In the degenerate case,  when $\mu(w_1) = \mu(w_2)= 0$, it is not possible to restrict $C$ as $w_{1,2}$ could be arbitrary constant functions.
\end{proof}

\appendix
\section{The space W for general  warped product manifolds}

In this appendix we compute $W(M,g)$ where $(M,g) = \left(B \times F, g_B + u^2 g_F\right)$ is any warped product manifold. This result is  referenced a few times in the proof of Theorem \ref{thm:RicciWP}. As in section 3 we let $\pi_1:M \rightarrow B$ and $\pi_2:M \rightarrow F$ denote the projections.  $\pi_1$ is a Riemannian submersion,  and we let $X,Y,\dots$ denote horizontal vector fields of this submersion and $U,V,\dots$ denote vertical vector fields. We start by recalling a lemma about the splitting of functions on a warped product.

 \begin{lem}[\cite{HPWwprigidity}] \label{lem:wsplitting}
If  $w : M \rightarrow \mathbb{R}$ satisfies
\[ (\mathrm{Hess}_{g} w)(X,U) = 0 \]
for all $X \in TB$ and $U\in TF$, then
\[ w = \pi_1^*(z) + \pi_1^*(u)\cdot \pi_2^*(v) \] where $z: B \rightarrow \mathbb{R}$, $ v: F \rightarrow \mathbb{R}$ are smooth functions.
\end{lem}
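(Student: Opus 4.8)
The plan is to reduce the statement to the standard formula for mixed second derivatives on a warped product and then to integrate the resulting first-order relation directly, reading off $v$ from a single slice.

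First I would record the O'Neill connection formula for $M = B\times_u F$ with $g = g_B + u^2 g_F$: for $X$ horizontal (lifted from $B$) and $U$ vertical (lifted from a field $\hat U$ on $F$) one has $\nabla^M_X U = \frac{X(u)}{u}\,U$. Hence
\[
\mathrm{Hess}_g w(X,U) = X(U(w)) - (\nabla^M_X U)(w) = X(U(w)) - \frac{X(u)}{u}\,U(w) = u\cdot X\!\left(\frac{U(w)}{u}\right).
\]
Since $u>0$ on the interior of $B$, the hypothesis $\mathrm{Hess}_g w(X,U)=0$ for all such $X,U$ is therefore \emph{equivalent} to the statement that, for every vertical $U$, the function $U(w)/u$ has vanishing derivative in every $B$-direction.

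Next I would fix a point $x_0$ in the interior of $B$, set $v := w(x_0,\cdot)/u(x_0)\in C^{\infty}(F)$ and $\phi := w - \pi_1^*(u)\cdot\pi_2^*(v)$, and show $U(\phi)=0$ for every vertical $U$. Indeed, for $U$ lifted from $\hat U$, the function $U(w)/u$ is constant along each slice $B\times\{y\}$ by the previous step (using that $B$ is connected), so
\[
\frac{U(w)}{u}(x,y) = \frac{U(w)}{u}(x_0,y) = \frac{\hat U\big(w(x_0,\cdot)\big)(y)}{u(x_0)} = \hat U(v)(y) = U\big(\pi_2^*(v)\big)(x,y),
\]
while $U(\pi_1^*(u))=0$ since $\pi_1^*(u)$ is pulled back from $B$. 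Combining these gives $U(w) = u\cdot U(\pi_2^*(v)) = U\big(\pi_1^*(u)\cdot\pi_2^*(v)\big)$, i.e. $U(\phi)=0$. Since $F$ is connected, $\phi$ is then constant along each fiber $\{x\}\times F$, so $\phi = \pi_1^*(z)$ with $z := w(\cdot,y_0) - u\cdot v(y_0)\in C^{\infty}(B)$; this is exactly the claimed decomposition $w = \pi_1^*(z) + \pi_1^*(u)\cdot\pi_2^*(v)$.

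I do not expect a genuine obstacle here: the content is entirely the warped-product connection formula together with an elementary integration, and choosing $v$ from a single slice is what makes the integration step automatic — in particular it sidesteps any cohomological condition on $F$. The only points needing a little care are the book-keeping in the Hessian identity and, when $\partial B\neq\emptyset$, the fact that one should run the argument on the interior of $B$ where $u>0$ and then extend the smooth decomposition to all of $B\times F$; connectedness of $B$ and $F$ is used, but that is implicit in the warped-product setting under consideration.
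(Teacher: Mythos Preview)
Your argument is correct. The key identity
\[
\mathrm{Hess}_g w(X,U) = X(U(w)) - \frac{X(u)}{u}\,U(w) = u\cdot X\!\left(\frac{U(w)}{u}\right)
\]
follows from the warped-product connection formula $\nabla^M_X U = \tfrac{X(u)}{u}U$, and once you have it the integration you perform (fixing a slice $\{x_0\}\times F$ to define $v$, then peeling it off to leave a fiberwise-constant remainder) is clean and complete. Your bookkeeping is accurate, and the use of connectedness of $B$ and $F$ is appropriate.

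Note that in this paper the lemma is quoted from \cite{HPWwprigidity} and is not re-proved here, so there is no in-paper argument to compare against. Your proof is precisely the sort of direct computation one would expect in that reference: the content is the single Hessian identity above, and the rest is elementary. The only point I would flag for polish is the boundary case: when $\partial B\neq\emptyset$ the metric degenerates over $\partial B$ and $M$ is not literally $\mathrm{int}(B)\times F$ there, so saying that $z$ and $v$ extend smoothly deserves a sentence (e.g.\ $v$ is already defined globally on $F$, and $z = w - uv$ extends since $w$ and $u$ are smooth on $M$ and $w$ is constant on each collapsed fiber). You acknowledge this but do not spell it out; for the purposes of the paper's applications, working on the interior suffices.
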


\begin{rem}
Note that this decomposition of $w$ is not unique as we can replace $z$ by $z + \alpha u$ and $v$ by $v- \alpha$ for a constant $\alpha$ and still get a valid decomposition for $w$.
\end{rem}

This allows us to compute the space $W_{\lambda, n+m}(M)$ for a general warped product metric. The computation breaks into a number of cases.

\begin{thm} \label{thm:WPSpace}
Let $M = B \times_{u} F$ be a warped product.
\begin{enumerate}
\item Suppose $u \in W_{\lambda, b+(k+m)}(B, g_B)$.
\begin{enumerate}
\item[(1.a)] If $F$ is Einstein with $\mathrm{Ric}^F = \frac{k-1}{m+k-1}\mu_{B}(u)$ and $\mu_{B}(u)\neq 0$,  then $W_{\lambda, n+m}(M)$ is the space of functions
\[
\pi_1^*(z) + \pi_1^*(u) \pi_2^*(v)
\]
where $z \in W_{\lambda, b+(k+m)}(B)$ with $\mu_B(u, z) = 0$ and $v \in W_{\mu_{B}(u), k+m}(F)$.
\item[(1.b)] If $F$ is Einstein with $\mathrm{Ric}^F = \frac{k-1}{m+k-1}\mu_{B}(u)$ and $\mu_{B}(u)=0$, then $W_{\lambda, n+m}(M)$ is the space of functions
\[
\pi_1^*(z) + \pi_1^*(u) \pi_2^*(v)
\]
where $z \in W_{\lambda, b+(k+m)}(B)$ and $v$ satisfies
\[
\mathrm{Hess}_{F} v= - \frac{1}{m+k-1}\mu_{B}(u, z) g_F.
\]
\item[(1.c)] If  $F$  does not satisfy  $\mathrm{Ric}^F = \frac{k-1}{m+k-1}\mu_{B}(u)$, then $W_{\lambda, n+m}(M)$ is the space of functions
\[
\pi_1^*(u)\pi_2^*(v)
\]
where $v \in W_{\mu_{B}(u), k+m}(F)$.
\end{enumerate}

\item Suppose $u \not\in W_{\lambda, b+(k+m)}(B, g_B)$.
\begin{enumerate}
\item[(2.a)] If $F$ is $\sigma$-Einstein, then $W_{\lambda, n+m}(M)$ consists of functions of the form
\[ \pi_1^*(z) \]
where $z:B \rightarrow \mathbb{R}$ satisfies
\begin{eqnarray*}
\mathrm{Hess}_{B} z &=&  \frac{z}{m} \left(\mathrm{Ric}^B - \frac{k}{u} \mathrm{Hess}_{B} u - \lambda g_B\right)  \\
g_B(\nabla u, \nabla z ) &=&  \frac{z}{u m} \left( \sigma  -  ( u \Delta_{B} u + (k-1) |\nabla u|_{B}^2 + \lambda u^2) \right).
\end{eqnarray*}
\item[(2.b)] If $F$ is not Einstein, then $W_{\lambda, n+m}(M) = \{0\}$.
\end{enumerate}
\end{enumerate}
\end{thm}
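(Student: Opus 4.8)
The plan is to decouple the single equation $\mathrm{Hess}_g w = \frac{w}{m}(\mathrm{Ric}^M - \lambda g)$ on $M = B\times_u F$ into conditions living separately on $B$ and on $F$, by exploiting the block structure of both tensors. First I would observe that the mixed Ricci curvature of a warped product vanishes (see \cite{Besse} and the warped product formulas of this appendix), so any $w \in W_{\lambda, n+m}(M)$ satisfies $(\mathrm{Hess}_g w)(X,U) = \frac{w}{m}(\mathrm{Ric}^M(X,U) - \lambda g(X,U)) = 0$ for $X$ horizontal and $U$ vertical; Lemma \ref{lem:wsplitting} then yields the splitting $w = \pi_1^*(z) + \pi_1^*(u)\cdot\pi_2^*(v)$, which I would carry along while remembering the gauge freedom $z \mapsto z + \alpha u$, $v\mapsto v - \alpha$ noted after that lemma.

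Substituting the splitting, I would restrict the equation to pairs of $\mathcal{B}$-vectors and then to pairs of $\mathcal{F}$-vectors (the mixed block being automatic). On the $\mathcal{B}$-block, the formula $\mathrm{Ric}^M|_{\mathcal{B}} = \mathrm{Ric}^B - \frac{k}{u}\mathrm{Hess}_B u$ together with the elementary identity $\mathrm{Hess}_g w|_{\mathcal{B}} = \mathrm{Hess}_B z + v\,\mathrm{Hess}_B u$ turns the equation into the schematic form ``($B$-tensor) $= v\cdot$($B$-tensor)''. Separating the $B$- and $F$-dependence forces a dichotomy: either the $B$-tensor on the right vanishes, which is precisely $\mathrm{Hess}_B u = \frac{u}{k+m}(\mathrm{Ric}^B - \lambda g_B)$, i.e. $u \in W_{\lambda, b+(k+m)}(B)$ (case (1)), and then reinserting this relation gives $\mathrm{Hess}_B z = \frac{z}{k+m}(\mathrm{Ric}^B - \lambda g_B)$, i.e. $z \in W_{\lambda, b+(k+m)}(B)$ too; or $v$ is constant, so $w$ descends to a function on $B$ (case (2)).

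In case (2) the $\mathcal{B}$-block becomes the first equation of (2.a); the $\mathcal{F}$-block, computed from the warped product Hessian formulas of this appendix (in particular (\ref{eqn:GenWPvert})) together with $\mathrm{Ric}^M|_{\mathcal{F}} = \mathrm{Ric}^F - (u\Delta_B u + (k-1)|\nabla u|_B^2)g_F$, has trace-free part $w\,(\mathrm{Ric}^F)_0 \equiv 0$, forcing either $w \equiv 0$ (giving $W(M) = \{0\}$, case (2.b)) or $F$ Einstein, and in the latter case the remaining scalar part of the $\mathcal{F}$-block is the second equation of (2.a). In case (1) the $\mathcal{F}$-block collapses, after feeding in $u, z \in W_{\lambda, b+(k+m)}(B)$ and recognizing $u\Delta_B u + (k+m-1)|\nabla u|_B^2 + \lambda u^2 = \mu_B(u)$ and its polarization $\mu_B(u,z)$, to $\mathrm{Hess}_F v = -\frac{1}{m+k-1}(\mu_B(u)\,v + \mu_B(u,z))\,g_F$ when $\mathrm{Ric}^F = \frac{k-1}{m+k-1}\mu_B(u)\,g_F$: for $\mu_B(u) = 0$ this is (1.b), and for $\mu_B(u)\neq 0$ I would use the gauge freedom to replace $z$ by $z - \frac{\mu_B(u,z)}{\mu_B(u)}u \in W_{\lambda, b+(k+m)}(B)$, which then has $\mu_B(u,\cdot) = 0$ and reduces the equation to $v \in W_{\mu_B(u), k+m}(F)$, giving (1.a). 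If instead $\mathrm{Ric}^F \neq \frac{k-1}{m+k-1}\mu_B(u)\,g_F$, the trace-free part of the $\mathcal{F}$-block, evaluated at a point of $F$ where $(\mathrm{Ric}^F)_0\neq 0$ (or, in the Einstein-with-wrong-constant case, by inspecting the nonzero coefficient of $z/u$), forces $z$ to be a multiple of $u$, absorbable into the $v$-term, and leaves $v \in W_{\mu_B(u), k+m}(F)$, which is (1.c). A reverse computation then confirms that all the listed functions solve the equation.

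The step I expect to be the main obstacle is the bookkeeping that collapses the $\mathcal{F}$-block in case (1): it requires repeatedly substituting the base equations for $u$ and $z$ (for instance $u\Delta_B z = z\Delta_B u$, obtained by tracing those equations), carefully separating $\Delta_B$- and $|\nabla\cdot|_B^2$-terms, and treating the fiber dimension $k = 1$ on its own, where the trace-free part of the $\mathcal{F}$-block carries no information and the ``constant'' $\tau$ in $\mathrm{Hess}_F v = -\tau v\,g_F$ is merely a function on $F$. A secondary point needing care is the separation-of-variables principle used repeatedly: an identity $T(x) = f(y)\,S(x)$ with $S$ a symmetric $2$-tensor that is nonzero at some base point forces $f$ to be constant, which I would justify by evaluating at such a point (and the divisions by $u$ should be read on the dense set $\{u>0\}$).
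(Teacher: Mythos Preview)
Your proposal is correct and follows essentially the same route as the paper's proof: split $w$ via Lemma~\ref{lem:wsplitting} using the vanishing mixed Ricci, read off the $\mathcal{B}$-block to get the dichotomy $u\in W_{\lambda,b+(k+m)}(B)$ versus $v$ constant, and then parse the $\mathcal{F}$-block case by case, using the gauge freedom $z\mapsto z+\alpha u$ to normalize $\mu_B(u,z)=0$ in case~(1.a). The only cosmetic difference is that in case~(1.c) the paper argues directly by freezing a point of $F$ and varying over $B$ (so the coefficient of $z/u$ must vanish unless $z/u$ is constant), whereas you first separate the trace-free and trace parts of the $\mathcal{F}$-block; both arguments are equivalent, and your remark that the $k=1$ case needs no special attention (since a one-dimensional $F$ is automatically Einstein with $\mathrm{Ric}^F=0=\frac{k-1}{m+k-1}\mu_B(u)$, so (1.c) cannot occur) is a useful clarification that the paper leaves implicit.
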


\begin{rem} In the case where $B$ has boundary, note that a function $\pi_1^*(z)$ is a smooth function on $B \times_u F$ if and only if $z$ satisfies Neumann boundary conditions, i.e., $\frac{\partial z}{\partial \nu}|_{\partial B} = 0$ where $\nu$ is a normal vector field of $\partial B$.
\end{rem}

\begin{proof}
The Ricci curvatures of a warped product are given by,
\begin{eqnarray*}
(\mathrm{Ric}  - \lambda g)  (X,Y) &=&  \mathrm{Ric}^{B}(X,Y) - \frac{k}{u} (\mathrm{Hess}_{B} u )(X,Y)- \lambda g_B(X,Y)   \\
(\mathrm{Ric} - \lambda g) (X,U) &=&  0\\
(\mathrm{Ric}- \lambda g)  (U,V) &=& \mathrm{Ric}^{F}(U,V) - (u \Delta_{B} u + (k-1) |\nabla u|_{B}^2 + \lambda u^2) g_F(U,V).
\end{eqnarray*}
If $w \in W_{\lambda, n+m}(M) $ we see that the hessian splits along the warped product and thus, from Lemma \ref{lem:wsplitting},  we have $w = \pi_1^*(z)+  \pi_1^*(u) \cdot \pi_2^*(v)$ for some functions $z$ on $\mathrm{int} (B)$ and  $v$ on $F$.  We can also assume that $z$ is not a non-zero multiple of $u$.    Multiplying the last set of equations by $\frac{w}{m}$, we have
\begin{eqnarray*}
\frac{w}{m} \left( \mathrm{Ric}  - \lambda g \right)(X,Y)  &=& \frac{z}{m} \left( \mathrm{Ric}^{B}(X,Y) - \frac{k}{u} (\mathrm{Hess}_{B} u )(X,Y)- \lambda g_B(X,Y) \right)  \\
&& + \frac{uv}{m}\left( \mathrm{Ric}^{B}(X,Y) - \frac{k}{u} (\mathrm{Hess}_{B} u )(X,Y)- \lambda g_B(X,Y) \right) \\
\frac{w}{m}\left(\mathrm{Ric} - \lambda g\right)(U,V) &=& \frac{z}{m}\left(\mathrm{Ric}^{F}(U,V) - (u \Delta_B u + (k-1) |\nabla u|_{B}^2 + \lambda u^2) g_F(U,V) \right)  \\
&& +  \frac{u v}{m}\left(\mathrm{Ric}^{F}(U,V) - (u \Delta_B u + (k-1) |\nabla u|_{B}^2 + \lambda u^2) g_F(U,V)\right).
\end{eqnarray*}
The hessian of $w$ is
\begin{eqnarray*}
(\mathrm{Hess} w) (X,Y) &=&  v(\mathrm{Hess}_{B} u )(X,Y) + (\mathrm{Hess}_{B} z)(X,Y) \\
(\mathrm{Hess} w)(U,V) &=& u(\mathrm{Hess}_{F} v)(U,V) +  uv|\nabla u|_{B}^2  g_F(U,V) +  u g_B(\nabla u, \nabla z) g_F(U, V).
\end{eqnarray*}
Equating the horizontal equations gives us that
\begin{eqnarray}
(\mathrm{Hess}_{B} z)(X,Y)  - \frac{z}{m} \left( \mathrm{Ric}^{B}(X,Y) - \frac{k}{u} (\mathrm{Hess}_{B} u )(X,Y)- \lambda g_B(X,Y) \right) \nonumber \\
= \frac{v u}{m}\left(  \mathrm{Ric}^{B}(X,Y) - \frac{m+ k}{u} (\mathrm{Hess}_{B} u )(X,Y)- \lambda g_B(X,Y) \right). \label{eqn:Hesszhorizontal}
\end{eqnarray}

Note that the condition $u \in W_{\lambda, b + (k+m)}(B)$ is exactly satisfied if the following quantity
\[
\mathrm{Ric}^{B}(X,Y) - \frac{m+ k}{u} (\mathrm{Hess}_{B} u )(X,Y)- \lambda g_B(X,Y)
\]
inside the parentheses on the last line  is identically zero.   If there is a point in $\mathrm{int}(B)$ where the quantity is non-zero, we can fix that point and let $y\in F$ vary.  The only quantity in the equation (\ref{eqn:Hesszhorizontal}) which changes with $y$ is $v$. This shows that if $u \notin W_{\lambda, b + (k+m)}(B)$, then $v$ must be constant.  Then we can write $w = \pi_1^*(z)$ for a possibly new function $z$ and thus $v = 0$.   The equations on horizontal and vertical directions then become
\begin{eqnarray*}
\mathrm{Hess}_{B} z &=&  \frac{z}{m} \left(\mathrm{Ric}^{B} - \frac{k}{u} \mathrm{Hess}_{B} u - \lambda g_B\right)  \\
g_B(\nabla u, \nabla z ) &=&  \frac{z}{u m} \left( \mathrm {Ric}^{F} -  ( u \Delta_{B} u + (k-1) |\nabla u|_{B}^2 + \lambda u^2) \right).
\end{eqnarray*}
The second equation above tells us that either $\mathrm{Ric}^{F}$ is constant or $z=0$, and we are in cases (2.a) and (2.b).

Next we assume that $u \in W_{\lambda, b + (k+m)}(B)$.   Then the horizontal equation (\ref{eqn:Hesszhorizontal}) becomes
\begin{eqnarray*}
(\mathrm{Hess}_{B} z)(X,Y) &=&  \frac{z }{u} (\mathrm{Hess}_{B} u)(X,Y)
\end{eqnarray*}
which shows that $z \in W_{\lambda, b + (k+m)} (B)$. In this case note that the quadratic form $\mu_B$ on $W_{\lambda, b+(k+m)}(B, g_B)$ is given by
\[
\mu_B(z) = z\Delta_{B} z +(k+m-1)\abs{\nabla z}_{B}^2 + \lambda z^2.
\]
Moreover, since  $m+k-1 > 0$, we have a well defined $\bmu_{B}(z) = \frac{\mu_B(z)}{m+k-1}$.
The vertical equation is then
\begin{eqnarray}
\nonumber u\left(\mathrm{Hess}_{F} v \right)(U,V) &=& - u g_B(\nabla u, \nabla z) g_F(U, V) + \frac{z}{m}\left(\mathrm{Ric}^{F}(U,V) - \mu_{B}(u) g_F(U,V) \right)  \\
 \label{eqn:GenWPvert} && + z |\nabla u|_{B}^2 g_F(U,V) +  \frac{uv}{m}\left(\mathrm{Ric}^{F}(U,V) - \mu_{B}(u) g_F(U,V) \right).
\end{eqnarray}
Dividing $u$ on both sides yields
\begin{eqnarray*}
\left(\mathrm{Hess}_{F} v\right)(U,V) &=& - g_B(\nabla u, \nabla z) g_F(U, V) + \frac{z}{u m}\left(\mathrm{Ric}^{F}(U,V) - \mu_{B}(u) g_F(U,V) \right)  \\
&& + \frac{z}{u} |\nabla u|_{B}^2 g_F(U,V) +  \frac{v}{m}\left(\mathrm{Ric}^{F}(U,V) - \mu_{B}(u) g_F(U,V) \right) \\
& = & - \bmu_{B}(u, z) g_F(U, V) + \frac{z}{u}\frac{1}{m}\left(\mathrm{Ric}^{F}(U,V) - (k-1)\bmu_{B}(u) g_F(U,V) \right)  \\
&& + \frac{v}{m}\left(\mathrm{Ric}^{F}(U,V) - (k+m-1)\bmu_{B}(u) g_F(U,V) \right).
\end{eqnarray*}
Fixing a point in $F$ and letting this equation vary over $B$ shows that, either $z$ is a constant multiple of $u$, or $g_F$ is $(k-1)\bmu_B(u)$-Einstein. Since we picked $z$ so that it is not a non-zero multiple of $u$, this shows that if $\mathrm{Ric}^{F}$ is not equal to $(k-1)\bmu_{B}(u)$, then $z = 0$ and
\[
\mathrm{Hess}_{F} v= \frac{v}{m}\left(\mathrm{Ric}^{F} - (k+m-1)\bmu_{B}(u) g_F\right)
\]
which gives us case (1.c). If $\mathrm{Ric}^F = (k-1)\bmu_{B}(u) g_F$, then we have
\[
\mathrm{Hess}_F v + \bmu_{B}(u) vg_F = - \bmu_{B}(u,z) g_F.
\]
If $\bmu_{B}(u) \ne 0$, then by adding some a constant $\alpha$(with $\bmu_{B}(u,z)= \alpha \bmu_{B}(u)$) to $z$ and subtracting $z$ by $\alpha u$ we may assume that $\bmu_{B}(u, z) = 0$ and then we have
\[
\mathrm{Hess}_{F} v = - \bmu_{B}(u) v g_F,
\]
which gives us case (1.a). Otherwise we have $\bmu_B(u) = 0$, i.e., $(F, g_F)$ is Ricci flat and
\[
\mathrm{Hess}_F v = - \bmu_{B}(u,z) g_F
\]
which is case (1.b).
\end{proof}

\section{The quadratic form $\mu$ }

In this appendix we discuss more details about the quadratic form $\mu$.  First we deal with the degenerate $m=1$ case.

\begin{prop} \label{prop:mu1}
Let $m=1$ and suppose that $W_{\lambda, n+1}(M) \neq \{ 0 \}$,   then either
\begin{enumerate}
\item $M$ is $\lambda$-Einstein,
\item $\mu$ is positive definite, $\dim W_{\lambda, n+1}(M) = 1$, and $\mathrm{scal} > (n-1) \lambda$ and is non-constant,
\item $\mu$ is negative definite, $\dim W_{\lambda, n+1}(M) = 1$, and $\mathrm{scal} < (n-1) \lambda$ and is non-constant, or
\item $\mu(w) = 0$ for all $w \in W$ and $\mathrm{scal} = (n-1)\lambda$   is constant.
\end{enumerate}
In cases (1),  (2), and (3) the non-zero functions in $W_{\lambda, n+1}(M)$ do not vanish.
\end{prop}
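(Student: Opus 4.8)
The plan is to run everything off Theorem~\ref{prop:bilinearform}, whose localized quadratic form degenerates drastically when $m=1$. Setting $m=1$ in $\mu_p(\alpha,v)=(m-1)|v|^2+\frac{\mathrm{scal}(p)-(n-m)\lambda}{m}\alpha^2$ gives $\mu_p(\alpha,v)=\rho(p)\,\alpha^2$, where I write $\rho:=\mathrm{scal}-(n-1)\lambda$; thus $\mu_p$ is degenerate with null space $\{0\}\times T_pM$ and its signature at $p$ is entirely determined by the sign of $\rho(p)$. Feeding $w\in W$ into the evaluation isometry of Theorem~\ref{prop:bilinearform} yields
\[
\mu(w)=\mu_p(w(p),\nabla w|_p)=\rho(p)\,w(p)^2\qquad\text{for every }p\in M,
\]
so the function $p\mapsto\rho(p)\,w(p)^2$ is constant on $M$ for each $w\in W$; in particular this recovers the identity $\mu(w)=(\mathrm{scal}-(n-1)\lambda)w^2$ recorded in Section~2.

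The next step is a clean dichotomy. Suppose first that some nonzero $w_0\in W$ has $\mu(w_0)=0$. Then $\rho\,w_0^2\equiv0$; since $w_0$ solves $\mathrm{Hess}\,w_0=w_0(\mathrm{Ric}-\lambda g)$, the standard unique-continuation fact (if $w_0$ and $\nabla w_0$ both vanish at a point then $w_0\equiv0$ — an ODE-uniqueness argument along geodesics, cf.\ \cite{HPWwprigidity}) shows $w_0$ cannot vanish on a nonempty open set, so $\rho\equiv0$ by continuity. Hence $\mathrm{scal}=(n-1)\lambda$ is constant and $\mu(w)=\rho\,w^2\equiv0$ for every $w\in W$: this is conclusion~(4). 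Otherwise every nonzero $w\in W$ has $\mu(w)\neq0$ (a nonzero $w_1$ with $\mu(w_1)=0$ would, by the same reasoning, force $\rho\equiv0$ and hence $\mu\equiv0$, contradicting the case we are in), and then each nonzero $w\in W$ is nowhere vanishing (a zero of $w$ would make $\rho\,w^2$ vanish there) and $\rho=\mu(w)/w^2$ is nowhere zero with sign equal to that of $\mu(w)$ — a sign independent of $w$, since it equals the sign of $\rho(p)$ at any fixed $p$.

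It remains to sort out which of (1)--(3) occurs in this second branch. Say $\rho>0$ on $M$ (the case $\rho<0$ is symmetric). Then $\mathrm{scal}>(n-1)\lambda$, and $\mu(w)=\rho\,w^2>0$ for all $w\neq0$, so $\mu$ is positive definite on $W$; since a subspace on which $\mu_p$ is positive definite meets the null space $\{0\}\times T_pM$ trivially, the evaluation map forces $\dim W\le(n+1)-n=1$, so $\dim W=1$. If moreover $\mathrm{scal}$ (equivalently $\rho$) is constant, then $w^2=\mu(w)/\rho$ is constant, so $w$ is a nonzero constant and $\mathrm{Hess}\,w=w(\mathrm{Ric}-\lambda g)=0$ forces $\mathrm{Ric}=\lambda g$: conclusion~(1). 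If $\mathrm{scal}$ is non-constant we are in conclusion~(2), and the mirror argument with $\rho<0$ gives conclusion~(3). The non-vanishing assertion in (2) and (3) was already proved above; in case~(1) it holds because when $M$ is $\lambda$-Einstein with $\lambda\neq0$ the defining equation reduces to $\mathrm{Hess}\,w=0$, and a complete Einstein manifold with $\lambda\neq0$ carries no nonzero parallel vector field (it would produce a local product splitting with a flat factor), so $W$ consists only of constants.

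The one genuinely non-formal ingredient, and the step I expect to need most care, is the unique-continuation statement for solutions of $\mathrm{Hess}\,w=w\,q$ used in the dichotomy; I would quote it from \cite{HPWwprigidity} rather than reprove it. Everything else is linear algebra of the degenerate form $\mu_p$ together with the evaluation isometry of Theorem~\ref{prop:bilinearform}. One minor point worth flagging in the write-up is the harmless overlap of conclusions (1) and (4) when $\lambda=0$ and $M$ is Ricci-flat (then $\mathrm{Hess}\,w=0$ genuinely has vanishing solutions, but one is simultaneously in case~(4), where no non-vanishing claim is made), which is exactly why the non-vanishing statement is restricted to cases (1)--(3).
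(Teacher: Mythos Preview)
Your proof is correct and follows the paper's approach: both rest on the identity $\mu(w)=(\mathrm{scal}-(n-1)\lambda)\,w^2$ and then case-split on whether some nonzero $w$ has $\mu(w)=0$. You are somewhat more careful than the paper---explicitly invoking unique continuation to pass from $\rho\,w^2\equiv0$ to $\rho\equiv0$, deducing $\dim W=1$ via the null space of $\mu_p$ and Theorem~\ref{prop:bilinearform} rather than the paper's direct observation that $w^2=\mu(w)/\rho$ determines $|w|$ from the scalar curvature, and flagging the (1)--(4) overlap at $\lambda=0$---but the route is the same.
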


In particular, this implies that $\mu$ is completely degenerate when $m=1$ and $\dim W(M)>1$.

\begin{cor} If  $\dim W_{\lambda, n+1}(M) >1$ then  $\mu(w) = 0$ for all $w \in W$. \end{cor}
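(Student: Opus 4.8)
The plan is to read this off directly from Proposition~\ref{prop:mu1}, whose four mutually exclusive alternatives cover every $(M,g)$ with $W_{\lambda,n+1}(M)\neq\{0\}$, and in particular every $(M,g)$ with $\dim W_{\lambda,n+1}(M)>1$. Alternatives~(2) and (3) of that proposition each assert $\dim W_{\lambda,n+1}(M)=1$, so they are incompatible with the hypothesis $\dim W>1$ and may be discarded. Alternative~(4) is verbatim the conclusion we want, namely $\mu(w)=0$ for all $w\in W$. Hence the only case requiring an argument is alternative~(1), that $M$ is $\lambda$-Einstein, and the task is to show that in this case $\dim W>1$ again forces $\mu\equiv 0$ on $W$.

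For alternative~(1), I would proceed as follows. When $m=1$ the defining relation~(\ref{eqn:spaceW}) for $W$ reads $\mathrm{Hess}\,w = w(\mathrm{Ric}-\lambda g)$, which for a $\lambda$-Einstein base is simply $\mathrm{Hess}\,w=0$; tracing gives $\Delta w=0$, so that for every $w\in W$
\[
\mu(w)=w\,\Delta w + (m-1)|\nabla w|^2 + \lambda w^2 = \lambda w^2 .
\]
Since $\mu(w)$ is a genuine constant for each $w\in W$ (this is Proposition~5 of \cite{KK} together with~(\ref{eqn:KK}), and is precisely what makes $\mu$ a quadratic form on $W$), the function $\lambda w^2$ is constant on the connected manifold $M$. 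If $\lambda\neq 0$ this forces every $w\in W$ to be constant, but the constants span only a one-dimensional subspace of $C^\infty(M)$, contradicting $\dim W>1$. Therefore $\lambda=0$, and then $\mu(w)=\lambda w^2=0$ for all $w\in W$.

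I do not expect a genuine obstacle here; the one point needing a little care is the assertion that $\mu(w)$ is constant on $M$ for $w\in W$, which is what licenses the step $\lambda w^2=\mathrm{const}$. If one wished to avoid invoking that fact, an alternative route in alternative~(1) is to note that since $\dim W>1$ there is a non-constant $w_0\in W$, and $\mathrm{Hess}\,w_0=0$ makes $\nabla w_0$ a nonzero parallel vector field, so $M$ de~Rham splits as $\mathbb{R}\times N$; the Ricci curvature vanishes along the $\mathbb{R}$-factor, forcing $\lambda=0$, after which $\mu(w)=w\,\Delta w=0$ for all $w\in W$ as before.
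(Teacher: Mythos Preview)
Your proof is correct. One small quibble: the four alternatives of Proposition~\ref{prop:mu1} are exhaustive but not literally mutually exclusive (a Ricci-flat $M$ with $\lambda=0$ falls under both (1) and (4)). This does no harm to your argument, since you handle (1) and (4) separately and each yields $\mu\equiv 0$.

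Your approach differs slightly from what the paper seems to intend. The paper treats the corollary as immediate, and the most direct route from the \emph{proof} of Proposition~\ref{prop:mu1} (rather than from its statement) is this: since $\dim W>1$, $W$ contains a non-constant $w_0$; the paper's dichotomy for non-constant $w$ then gives either $\mu(w_0)\neq 0$ and hence $\dim W=1$ (a contradiction), or $\mu(w_0)=0$ and hence $\mathrm{scal}=(n-1)\lambda$, from which $\mu(w)=w^2(\mathrm{scal}-(n-1)\lambda)=0$ for all $w\in W$. This bypasses the Einstein case altogether. Your route instead reads the corollary off the proposition's \emph{statement} and therefore has to supply the extra step in case~(1); your argument there (constancy of $\mu(w)$ forcing $\lambda=0$, or alternatively the parallel-gradient observation) is a clean way to do this. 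Both approaches are short and rest on the same identity $\mu(w)=w^2(\mathrm{scal}-(n-1)\lambda)$; yours is a bit more self-contained, while the paper's intended reading avoids having to revisit the Einstein case.
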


\begin{rem}
When there is a $w$ with  $\mu(w)=0$,   $(M,g)$ is  called a \emph{static metric}.   Abstractly, the cases with $\mu \neq 0$ can occur.  For example if
\[  g_M =  dr^2 + \cosh^2(r) g_{F^{n-1}} \]
where $F$ is an $(n-1)$-Einstein metric with Ricci curvature $-(n-1)$.  Then $\cosh(r) \in W_{-n , n+1}(M, g_M)$ and $\mu$ is negative definite.  On the other hand, if $\mu(w)\neq 0$ there is no Einstein metric $E = M \times_{w} F^1$ because there is no one dimensional fiber with Ricci curvature $\mu(w)$.
\end{rem}

\begin{proof}[Proof of Proposition \ref{prop:mu1}]
When $m=1$ we have
\[ \mu(w)  =   w^2 \left( \mathrm{scal} - (n-1) \lambda \right).  \]
If  $w$ is constant then $M$ is $\lambda$-Einstein.  Otherwise, suppose that $\mu(w) \neq 0$, then we  have
\[ w^2 = \frac{\mu(w)}{  \mathrm{scal} - (n-1) \lambda } \]
showing that  $\mu(w)$ and $\mathrm{scal} -(n-1) \lambda$ have the same sign  and that $w$  never vanishes.  This also shows that $w$ is determined up to a multiplicative constant by the scalar curvature.  This implies that  $\dim W_{\lambda, n+1}(M)= 1$ and the scalar curvature is non-constant.   Cases (2) and (3) then correspond to the sign choice of $\mu$.

Finally,  if  $\mu(w)$ is zero for some non-zero function $w$,  then $\mathrm{scal} = (n-1)\lambda$.  This implies $\mu(w)=0$ for all $w \in W$.
\end{proof}

When $m\neq 1$ we can also divide $\mu$ by $(m-1)$ and get the rescaled quadratic form
\begin{equation}
\bar{\mu}(u) = |\nabla u|^2 + \kappa w^2.
\end{equation}
where
\begin{equation}
\label{eqn:defkappa} \kappa = \frac{ \mathrm{scal} -(n-m) \lambda}{m(m-1)}
\end{equation}

We also record here  some basic statements about the interplay of $\bar{\mu}$ with the space $W$ in the case where $m \neq 1$, the proofs follow simply from the definition.

\begin{cor}\label{cor:bmu}
Let $\left(M,g\right)$ be a Riemannian manifold and $w\in W_{\lambda,n+m}\left(M,g\right)$ with $m\ne 1$, then the following holds.
\begin{enumerate}
\item If $\bar{\mu}\left(w\right)\leq0,$ then either $w$ is trivial or
never vanishes.
\item If $\bar{\mu}\left(w\right)>0$ and $\kappa\left(p\right)\leq0$ for
some $p\in M$, then $\nabla w|_{p}\neq0.$
\item If $\bar{\mu}\left(w\right)\geq0$ and $\kappa\left(p\right)<0$ for
some $p\in M$, then $\nabla w|_{p}\neq0.$
\item If $\kappa\left(p\right)>0$ for some $p\in M,$ then $\bar{\mu}$ is
elliptic.
\item If $\kappa\leq0$ on $M$ and $\kappa\left(p\right)=0$ for some $p\in M,$
then $\bar{\mu}$ is either elliptic or parabolic.
\item If $\kappa<0$ on $M,$ then $\bar{\mu}$ has index $\leq1$, nullity
$\leq1,$ and they cannot both be $1.$
\end{enumerate}
\end{cor}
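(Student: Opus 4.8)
The plan is to derive all six statements from two structural facts that are already available: (i) for every $w\in W_{\lambda,n+m}(M,g)$ the quantity $\bar{\mu}(w)=|\nabla w|^2+\kappa w^2$ is a \emph{constant} on $M$ (this is exactly what makes $\mu$, hence $\bar{\mu}=\mu/(m-1)$, a well-defined quadratic form on $W$, as used in Theorem~\ref{prop:bilinearform}); and (ii) for any fixed $p\in M$, dividing the two forms in Theorem~\ref{prop:bilinearform} by $m-1\ne0$ yields an injective isometry
\[
(W_{\lambda,n+m}(M,g),\bar{\mu})\hookrightarrow(\mathbb{R}\times T_pM,\bar{\mu}_p),\qquad \bar{\mu}_p(\alpha,v)=|v|^2+\kappa(p)\,\alpha^2,
\]
where the last formula is just the localized form $\mu_p$ of Section~2 divided by $m-1$. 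In particular the isometry type (index and nullity) of $\bar{\mu}$ on $W$ coincides with that of the restriction of $\bar{\mu}_p$ to the image of the evaluation map, for any single choice of $p$.

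For (1)--(3) I would simply evaluate $\bar{\mu}(w)$ at a convenient point. For (1): if $w$ is nontrivial and $w(p)=0$, then injectivity of the evaluation map forces $\nabla w|_p\ne0$, so $\bar{\mu}(w)=\bar{\mu}_p(0,\nabla w|_p)=|\nabla w|_p^2>0$, contradicting $\bar{\mu}(w)\le0$; hence $w$ never vanishes. For (2): if $\nabla w|_p=0$ then $\bar{\mu}(w)=\kappa(p)\,w(p)^2\le0$, contradicting $\bar{\mu}(w)>0$. For (3): the same identity gives $\bar{\mu}(w)=\kappa(p)\,w(p)^2\le0$, so with $\bar{\mu}(w)\ge0$ we get $\bar{\mu}(w)=0$ and, since $\kappa(p)<0$, also $w(p)=0$; but then $w(p)=0$ and $\nabla w|_p=0$ force $w\equiv0$ by injectivity, so a nontrivial $w$ has $\nabla w|_p\ne0$.

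For (4)--(6) I would split on the sign of $\kappa(p)$ at the chosen point. If $\kappa(p)>0$, then $\bar{\mu}_p$ is positive definite on all of $\mathbb{R}\times T_pM$, hence so is its restriction to $W$; this is (4). If $\kappa\le0$ on $M$ and $\kappa(p)=0$, then $\bar{\mu}_p$ is positive semidefinite with exactly a one-dimensional radical (the $\mathbb{R}$-factor), so its restriction to any subspace is again positive semidefinite with radical contained in that line; thus $\bar{\mu}|_W$ is positive definite or has a one-dimensional radical, which is (5). If $\kappa<0$ on $M$, then $\bar{\mu}_p$ is nondegenerate with a single negative direction, i.e. Lorentzian: the index of $\bar{\mu}|_W$ is at most the ambient index $1$, and the radical of $\bar{\mu}|_W$ is a totally isotropic subspace of a Lorentzian space, hence at most one-dimensional, so the nullity of $\bar{\mu}|_W$ is at most $1$.

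The one step I expect to require a genuine argument rather than a bookkeeping evaluation is the last assertion of (6): that index and nullity cannot both be $1$. I would prove this from the elementary Lorentzian fact that a nonzero null vector is never $\bar{\mu}_p$-orthogonal to a timelike vector (immediate from the reverse Cauchy--Schwarz inequality, writing $\mathbb{R}\times T_pM=\mathbb{R}(-)\oplus T_pM(+)$). Indeed, if both were $1$, a nonzero radical vector of $\bar{\mu}|_W$ would be null in $(\mathbb{R}\times T_pM,\bar{\mu}_p)$ and orthogonal to every vector of $W$, in particular to some vector on which $\bar{\mu}|_W$ is negative, i.e. to a timelike vector — a contradiction. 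This completes (6) and the proof.
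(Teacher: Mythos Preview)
Your proof is correct and is precisely the kind of direct verification the paper has in mind when it says that ``the proofs follow simply from the definition''; the paper gives no further argument. Your use of the constancy of $\bar\mu(w)=|\nabla w|^2+\kappa w^2$ together with the injective isometric evaluation map of Theorem~\ref{prop:bilinearform} is exactly what is needed, and your Lorentzian orthogonality argument for the last clause of (6) is clean and correct.
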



\end{document}